\numberwithin{equation}{section}
\numberwithin{figure}{section}
 \theoremstyle{definition}
 \newtheorem*{defn*}{\protect\definitionname}
  \theoremstyle{plain}
  \newtheorem*{thm*}{\protect\theoremname}
\theoremstyle{plain}
\newtheorem{thm}{\protect\theoremname}[section]
  \theoremstyle{definition}
  \newtheorem{defn}[thm]{\protect\definitionname}
  \theoremstyle{plain}
  \newtheorem{lem}[thm]{\protect\lemmaname}
  \theoremstyle{remark}
  \newtheorem{rem}[thm]{\protect\remarkname}
  \theoremstyle{plain}
  \newtheorem{prop}[thm]{\protect\propositionname}
  \theoremstyle{definition}
  \newtheorem{example}[thm]{\protect\examplename}
  \theoremstyle{definition}
  \newtheorem{xca}[thm]{\protect\exercisename}
  \theoremstyle{plain}
  \newtheorem{question}[thm]{\protect\questionname}
  \theoremstyle{plain}
  \newtheorem{cor}[thm]{\protect\corollaryname}
  \theoremstyle{definition}
  \newtheorem{condition}[thm]{\protect\conditionname}
  \providecommand{\conditionname}{Condition}
  \providecommand{\corollaryname}{Corollary}
  \providecommand{\definitionname}{Definition}
  \providecommand{\examplename}{Example}
  \providecommand{\exercisename}{Exercise}
  \providecommand{\lemmaname}{Lemma}
  \providecommand{\propositionname}{Proposition}
  \providecommand{\questionname}{Question}
  \providecommand{\remarkname}{Remark}
  \providecommand{\theoremname}{Theorem}
\providecommand{\theoremname}{Theorem}
\begin{document}

\title{Correspondences without a Core}

\author{Raju Krishnamoorthy}
\begin{abstract}
We study the formal properties of correspondences of curves without
a core, focusing on the case of étale correspondences. The motivating
examples come from Hecke correspondences of Shimura curves. Given
a correspondence without a core, we construct an infinite graph $\mathcal{G}_{gen}$
together with a large group of ``algebraic'' automorphisms $A$.
The graph $\mathcal{G}_{gen}$ measures the ``generic dynamics''
of the correspondence. We construct specialization maps $\mathcal{G}_{gen}\rightarrow\mathcal{G}_{phys}$
to the ``physical dynamics'' of the correspondence. We also prove
results on the number of bounded étale orbits, in particular generalizing
a recent theorem of Hallouin and Perret. We use a variety of techniques:
Galois theory, the theory of groups acting on infinite graphs, and
finite group schemes.

\tableofcontents{}
\end{abstract}

\maketitle

\section{Introduction}

\let\thefootnote\relax\footnotetext{Freie Universität Berlin}\let\thefootnote\relax\footnotetext{raju@math.columbia.edu}

In \cite{mochizuki1998correspondences}, Mochizuki proved that if
an étale correspondence of complex hyperbolic curves

\[
\xymatrix{ & Z\ar[dl]\ar[dr]\\
X &  & Y
}
\]
has generically unbounded dynamics, then $X$, $Y$, and $Z$ are
all Shimura curves. Mochizuki uses a highly non-trivial result of
Margulis \cite{margulis1991discrete}, which characterizes Shimura
curves via properties of discrete subgroups of $PSL_{2}(\mathbb{R})$.

The most basic examples of Shimura varieties are the modular curves,
parametrizing elliptic curves with level structure. A slightly less
familiar example comes from moduli spaces of \emph{fake elliptic curves};
these Shimura varieties are projective algebraic curves. It turns
out that the modular curves are the only non-compact Shimura curves.
See Deligne \cite{deligne1979shimura} for a general introduction
to Shimura varieties.

In general, Shimura varieties are quasi-projective algebraic varieties
defined over $\overline{\mathbb{Q}}$ \cite{borovoi1982langlands,deligne1979shimura,milne1982conjugates,milne1983action}.
Recent work of Kisin \cite{kisin2010integral} shows that Shimura
varieties of abelian type have natural integral models, which opens
up the possibility of studying their reduction modulo $p$. PEL-type
Shimura varieties are moduli space of abelian varieties with manifestly
algebraic conditions (i.e. fixing the data of a polarization, endomorphisms,
and level.) Using the moduli interpretation it is straightforward
to define PEL-type Shimura varieties directly over finite fields $\mathbb{F}_{q}$,
at least for most $q$. As far as we know, there is not as-of-yet
a direct definition of general non-PEL-type Shimura varieties over
$\mathbb{F}_{q}$.

Jie Xia has recently taken the simplest example of non-PEL-type Shimura
curves, what he calls Mumford curves, and given ``direct definitions''
over $\overline{\mathbb{F}}_{p}$ \cite{xia2013crystalline,xia2013tensor,xia2014adic}.
The most basic example of Mumford curves parameterize abelian 4-folds
with certain extra Hodge classes, as in Mumford's original paper \cite{mumford1969note}.
Xia proved theorems of the following form: given an abelian scheme
$\mathcal{A}\rightarrow X$ over a curve $X$, there are certain conditions
that ensure that the pair $(\mathcal{A},X)$ is the reduction of a
Mumford curve together with its universal abelian scheme over $W(\overline{\mathbb{F}}_{p})$.

In Chapter 2 of my thesis \cite{krishnamoorthy2016dynamics}, we posed
the question of characterizing Shimura curves over $\mathbb{F}_{q}$.
Unlike Xia, we \emph{did not} assume the existence of an abelian scheme
$\mathcal{A}$ over the curves considered. Instead, we took as our
starting point Mochizuki's Theorem, which is of group theoretic nature.
\begin{defn*}
Let $X\leftarrow Z\rightarrow Y$ be a correspondence of curves over
$k$. Then we have an inclusion diagram $k(X)\subset k(Z)\supset k(Y)$
of function fields. We say that the correspondence \emph{has no core
}if $k(X)\cap k(Y)$ has transcendence degree 0 over $k$.
\end{defn*}
This definition formalizes the phrase \textquotedbl{}generically unbounded
dynamics\textquotedbl{} (Remark \ref{Remark:Bounded_orbit} and Proposition
\ref{Proposition:no_core_infinite_graph}.) Shimura curves have many
étale correspondences without a core. Inspired by Mochizuki's theorem,
we wondered if all étale correspondences of curves without a core
are \textquotedbl{}related to\textquotedbl{} Shimura curves. Given
a smooth projective curve $X$ over $\mathbb{F}_{q}$, are there other
group theoretic conditions on $\pi_{1}^{\acute{e}t}(X)$ that ensure
that $X$ is the reduction modulo $p$ of a classical Shimura curve?

In this article, we explore the formal structure of (étale) correspondences
without a core with an aim to understanding the similarities with
Hecke correspondences of Shimura curves. We now state the main constructions/results.

Given a correspondence without a core, in Section \ref{Section:Generic_graph}
we construct a pair $(\mathcal{G}_{gen},A)$ of an infinite graph
together with a large topological group of ``algebraic'' automorphisms.
The graph $\mathcal{G}_{gen}$ roughly measures the ``generic dynamics.''
In the case of a symmetric $l$-adic Hecke correspondence of modular
curves, $\mathcal{G}_{gen}$ is a tree and the pair $(\mathcal{G}_{gen},A)$
is related to the action of $PSL_{2}(\mathbb{Q}_{l})$ on its building.
When $\mathcal{G}_{gen}$ is a tree, we prove that the vertices of
$\mathcal{G}_{gen}$ are in bijective correspondence with the maximal
open compact subgroups of a certain subgroup $A^{PQ}$ of $A$ (Corollary
\ref{corollary:max_compact}.) This perhaps suggests that in this
case the action of the topological group $A^{PQ}$ on $\mathcal{G}_{gen}$
is similar to the action of the $l$-adic linear group $PSL_{2}(\mathbb{Q}_{l})$
on its building.
\begin{defn*}
Let $X\overset{f}{\leftarrow}Z\overset{g}{\rightarrow}Y$ be a correspondence
of curves over $k$. A \emph{clump }is a finite set $S\subset Z(\overline{k})$
such that $f^{-1}(f(S))=g^{-1}(g(S))=S$. A clump is \emph{étale}
if $f$ and $g$ are étale at all points of $S$.
\end{defn*}
A clump may be thought of as a \textquotedbl{}bounded orbit of geometric
points.\textquotedbl{} Hecke correspondences of modular curves over
$\mathbb{F}_{p}$ have a natural étale clump: the supersingular locus.
\begin{thm*}
(see Theorem \ref{Theorem:one_clump}) Let $X\overset{f}{\leftarrow}Z\overset{g}{\rightarrow}Y$
be a correspondence of curves over a field $k$ without a core. There
is at most one étale clump.
\end{thm*}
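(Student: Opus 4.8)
\emph{Strategy and reduction.} I would argue by contradiction: from two distinct étale clumps I will produce a non-constant element of $k(X)\cap k(Y)$, contradicting the absence of a core. A direct check shows that if $S,S'$ are clumps then so are $S\cup S'$, $S\cap S'$, and, when $S\subseteq S'$, the difference $S'\setminus S$; all three operations preserve the étale property (for the difference one uses that $f(S'\setminus S)$ is disjoint from $f(S)$, and likewise for $g$). Hence, if there were two distinct étale clumps, then after passing to a suitable intersection and difference there would be two \emph{disjoint} nonempty étale clumps $S_1,S_2$; note that $f(S_1),f(S_2)$ are then disjoint, and so are $g(S_1),g(S_2)$. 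Since the no-core condition is insensitive to extending the ground field, I may assume $k=\overline k$, so that $Z$ is a smooth projective curve and the $S_i$ are honest finite sets of points; and I may assume $\deg f=d\ge 2$ and $\deg g=e\ge2$, because $d=1$ would force $f$ to be an isomorphism and then $k(Y)\subseteq k(Z)=f^*k(X)$ would already be a core (symmetrically for $e$).

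\emph{Divisorial consequences of the étale-clump condition.} Write $D_i=f(S_i)$ and $E_i=g(S_i)$, regarded as reduced effective divisors. Since $f$ is étale along $S_i$ and $f^{-1}(D_i)=S_i$, we get $f^*D_i=[S_i]$ as divisors on $Z$, and likewise $g^*E_i=[S_i]$; in particular $f^*D_i=g^*E_i$, and counting points gives $|S_i|=d\cdot\deg D_i=e\cdot\deg E_i$. Form the degree-zero divisors
\[
\Delta=|S_2|\,D_1-|S_1|\,D_2\ \text{ on }X,\qquad \Delta'=|S_2|\,E_1-|S_1|\,E_2\ \text{ on }Y,
\]
so that $f^*\Delta=|S_2|[S_1]-|S_1|[S_2]=g^*\Delta'$ on $Z$. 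Pushing forward through $g_*$ and $f_*$, and using once more that each $S_i$ is an étale clump (so that the $f$- and $g$-fibres over $D_i$ and $E_i$ lie entirely inside $S_i$), a short computation gives $f_*g^*g_*f^*\Delta=de\,\Delta$; thus the class of $\Delta$ in $\mathrm{Jac}(X)$ is an eigenvector, with eigenvalue $de$, of the correspondence endomorphism, and similarly for $[\Delta']$ on $\mathrm{Jac}(Y)$.

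\emph{Producing the function, granting torsion.} Suppose $[\Delta]\in\mathrm{Jac}(X)$ and $[\Delta']\in\mathrm{Jac}(Y)$ are torsion, and choose $N\ge1$ with $N\Delta=\mathrm{div}_X(\phi)$ and $N\Delta'=\mathrm{div}_Y(\psi)$. Then $\mathrm{div}_Z(f^*\phi)=N f^*\Delta=N g^*\Delta'=\mathrm{div}_Z(g^*\psi)$, so $f^*\phi/g^*\psi$ is a nowhere-vanishing regular function on the connected proper curve $Z$, hence a constant; rescaling $\psi$ we arrange $f^*\phi=g^*\psi$. This common element lies in $f^*k(X)\cap g^*k(Y)\subseteq k(Z)$ and is non-constant, since $N\Delta\ne0$ has positive and negative parts supported on the disjoint nonempty divisors $D_1$ and $D_2$. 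Hence $k(X)\cap k(Y)$ has positive transcendence degree over $k$ — a core — a contradiction.

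\emph{The main obstacle.} It remains to show $[\Delta]$ and $[\Delta']$ are torsion, which is where the real work lies. If $k$ is algebraic over a finite field this is immediate: $X$ is defined over some $\mathbb F_q$, and $\mathrm{Jac}(X)(\overline{\mathbb F}_q)=\bigcup_n\mathrm{Jac}(X)(\mathbb F_{q^n})$ is a union of finite groups. In general, $[\Delta]$ generates an abelian subvariety $B\subseteq\mathrm{Jac}(X)$ on which $f_*g^*g_*f^*$ is multiplication by $de$; were $B$ nonzero, $f^*B=g^*B'$ would be a nonzero abelian subvariety of $\mathrm{Jac}(Z)$ pulled back from both $X$ and $Y$ and fixed up to $de$ by the correspondence, and one must exclude this using the no-core hypothesis together with the étaleness of the correspondence — via the bound on the eigenvalues of the correspondence on $H^1$ (so that the eigenvalue $de$ does not occur outside $H^0\oplus H^2$), or equivalently via the Galois-theoretic structure of Section~\ref{Section:Generic_graph}; in characteristic $0$ one can instead show outright that no étale clump exists, whence the statement is vacuous. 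A parallel, Jacobian-free route runs through the physical graph: distinct finite étale connected components of $\mathcal G_{phys}$ give linearly independent Perron eigenvectors for the adjacency operator of the correspondence, all with eigenvalue $de$, and the absence of a core forces this eigenvalue to be simple.
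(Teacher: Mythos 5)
Your reduction to two disjoint nonempty \'etale clumps, the divisorial bookkeeping ($f^{*}D_{i}=[S_{i}]$, $g^{*}E_{i}=[S_{i}]$, hence $f^{*}\Delta=g^{*}\Delta'$ with both of degree zero), and the endgame (torsion classes give $\phi,\psi$ with $f^{*}\phi=g^{*}\psi$ non-constant, hence a core) all check out; this is essentially a divisor-and-function translation of the paper's argument, which instead packages a clump as an invariant line bundle $\mathscr{L}(S)$ with a distinguished line of sections $V_{S}$ and derives the contradiction from $h^{0}(\mathscr{L})\leq1$ (Proposition \ref{Proposition:at_most_one_section}). But the step you defer as ``the main obstacle'' is the entire content of the theorem, and you have not closed it. The paper closes it with Lemma \ref{Lemma:No_abelian_subvariety}: the pair $([\Delta],[\Delta'])$ lies in $Pic^{0}(X\leftarrow Z\rightarrow Y)=\ker\bigl(Pic^{0}(X)\times Pic^{0}(Y)\overset{f^{*}-g^{*}}{\rightarrow}Pic^{0}(Z)\bigr)$, and that group scheme is finite whenever the correspondence has no core; torsionness of $[\Delta]$ and $[\Delta']$ is immediate from this. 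The proof of that lemma is not formal: one dualizes a putative positive-dimensional abelian subvariety to a common quotient $JX\rightarrow B\leftarrow JY$ compatible with $JZ$, and then uses Abel--Jacobi maps to produce non-constant maps $X\rightarrow B$ and $Y\rightarrow B$ commuting with the correspondence, whose image curve is a core. None of your three proposed substitutes supplies this. The eigenvalue assertion (``$de$ does not occur on $H^{1}$'') is false for correspondences with a core, so proving it under the no-core hypothesis is exactly equivalent to the lemma you are trying to avoid; the Castelnuovo--Severi/Weil inequality only gives $|\lambda|\leq de$, and ruling out equality is the whole point. The characteristic-$0$ escape is also unavailable: the theorem is stated for arbitrary (not necessarily \'etale) correspondences, which can carry \'etale clumps in characteristic $0$, and in any case the paper's proof that \'etale correspondences have no clumps in characteristic $0$ (Corollary \ref{Corollary:char_0_no_clumps}) itself rests on Lemma \ref{Lemma:No_abelian_subvariety}, so invoking it here would be circular.

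A smaller point in the same direction: you appeal to ``the \'etaleness of the correspondence'' when sketching how to exclude the abelian subvariety, but the theorem only assumes $f$ and $g$ are \'etale at the points of the clump, not globally; fortunately the paper's lemma needs no \'etaleness at all, only the absence of a core and projectivity (harmless after compactifying, as you implicitly do). Once Lemma \ref{Lemma:No_abelian_subvariety} is inserted (or reproved), your argument is complete and is a serviceable variant of the paper's; the preliminary reduction to disjoint clumps and the $f_{*}g^{*}g_{*}f^{*}$ eigenvalue computation are then not needed.
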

An example: let $l\neq p$. Applying Theorem \ref{Theorem:one_clump}
to the Hecke correspondence $Y(1)\leftarrow Y_{0}(l)\rightarrow Y(1)$
reproves the fact that any two supersingular elliptic curves over
$\overline{\mathbb{F}}_{p}$ are related by an $l$-primary isogeny.
Theorem \ref{Theorem:one_clump} implies a generalization of a theorem
of Hallouin and Perret \cite{hallouin2014recursive}, who came upon
it in the analysis of optimal towers in the sense of Drinfeld-Vladut.
They use spectral graph theory and an analysis of the singularities
of a certain recursive tower. In our language, the hypotheses of their
``one clump theorem'' are
\begin{itemize}
\item $k\cong\mathbb{F}_{q}$
\item $X\leftarrow\Gamma\rightarrow X$ is a minimal self-correspondence
of type $(d,d)$
\item $\mathcal{H}_{gen}$, a certain directed graph where the in-degree
and out-degree of every vertex is $d$, has no directed cycles.
\end{itemize}
Our techniques allow one to relax the third condition to ``$\mathcal{H}_{gen}$
is infinite''; in particular, $\mathcal{H}_{gen}$ may have directed
cycles. Moreover, our proof works over any field $k$ and is purely
algebro-geometric. See the lengthy Remark \ref{Remark:generalize_Perret_Hallouin}
for a full translation/derivation. 

Specializing to characteristic 0, we prove the following.
\begin{thm*}
(see Corollary \ref{Corollary:char_0_no_clumps}) Let $X\leftarrow Z\rightarrow Y$
be an étale correspondence of projective curves over $k$ without
a core. Suppose $\text{char}(k)=0$. Then there are no clumps.
\end{thm*}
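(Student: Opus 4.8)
The plan is to pass to $k=\mathbb{C}$ and run a uniformization argument, extracting a contradiction from the simplicity of $PSL_2(\mathbb{R})$. Since $f$ and $g$ are finite and étale everywhere, every clump is automatically an étale clump, so suppose toward a contradiction that there is a nonempty clump $S\subset Z(\overline k)$. All the data $X,Y,Z,f,g,S$ are defined over a finitely generated subfield of $k$, and absence of a core is insensitive to extension of the (characteristic $0$) ground field, so I may assume $k=\mathbb{C}$. One preliminary remark: if $g_X=0$ then $X\cong\mathbb{P}^1$ is simply connected, the finite étale map $f$ is an isomorphism, and then $k(Y)\hookrightarrow k(Z)=k(X)$ exhibits a core; thus $g_X\geq 1$, and symmetrically $g_Y\geq 1$, $g_Z\geq 1$.

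Next I would delete the clump. Set $U_X=X\setminus f(S)$, $U_Y=Y\setminus g(S)$, $U_Z=Z\setminus S$. The clump identities $f^{-1}(f(S))=g^{-1}(g(S))=S$ ensure that $f,g$ restrict to finite étale maps $U_X\xleftarrow{f}U_Z\xrightarrow{g}U_Y$; this open correspondence still has no core (function fields are unchanged), and since $g_X,g_Y,g_Z\geq 1$ while $f(S),g(S),S$ are nonempty, the three open curves are hyperbolic Riemann surfaces of finite area. Uniformize compatibly: fix $\mathbb{H}\to U_Z$ with deck group $\Gamma_3$; then $\mathbb{H}\to U_X$, $\mathbb{H}\to U_Y$ are universal coverings with deck groups $\Gamma_1,\Gamma_2\leq PSL_2(\mathbb{R})$, and $\Gamma_3\leq\Gamma_1\cap\Gamma_2$ has finite index in each. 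Being commensurable, $\Gamma_1,\Gamma_2,\Gamma_3$ share one set $\mathcal{C}\subset\partial\mathbb{H}$ of parabolic fixed points, and $\mathcal{C}$ is stable under $\Gamma:=\langle\Gamma_1,\Gamma_2\rangle$. Now I would isolate two facts. First, ``$f,g$ étale over the clump'' translates into ``$U_Z\to U_X$ and $U_Z\to U_Y$ are unramified at every cusp'': the cusp-ramification index of $U_Z\to U_X$ at $\xi$ equals $[(\Gamma_1)_\xi:(\Gamma_3)_\xi]$ and also equals the ramification index of $f$ at the corresponding deleted point of $S$, hence $1$; so $(\Gamma_1)_\xi=(\Gamma_3)_\xi=(\Gamma_2)_\xi$ for every $\xi\in\mathcal{C}$. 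Second, ``no core'' forces $\Gamma$ to be non-discrete: if $\Gamma$ were discrete it would be a lattice in which $\Gamma_1,\Gamma_2$ have finite index, and the common quotient $\Gamma\backslash\mathbb{H}$ would be a core (compare Proposition \ref{Proposition:no_core_infinite_graph}); since $\Gamma$ contains the lattice $\Gamma_1$ it is non-elementary, so non-discreteness upgrades to density in $PSL_2(\mathbb{R})$.

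The punchline is then immediate. Let $\Delta:=\langle(\Gamma_1)_\xi:\xi\in\mathcal{C}\rangle$. Conjugating a generator $(\Gamma_1)_\xi$ by $\gamma\in\Gamma_1$ yields $(\Gamma_1)_{\gamma\xi}$, and conjugating it by $\gamma\in\Gamma_2$ yields $\gamma(\Gamma_2)_\xi\gamma^{-1}=(\Gamma_2)_{\gamma\xi}=(\Gamma_1)_{\gamma\xi}$ (using the equality above and $\gamma\xi\in\mathcal{C}$); in both cases the result is again a generator, so $\Delta\trianglelefteq\Gamma$. But $\Delta\leq\Gamma_1$ is discrete, while $\Gamma$ is dense and $PSL_2(\mathbb{R})$ has no proper nontrivial closed normal subgroup, so the closure of the normal subgroup $\Delta$ would be all of $PSL_2(\mathbb{R})$, contradicting $\overline{\Delta}=\Delta$; hence $\Delta=\{1\}$. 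A cusp stabilizer is infinite cyclic, so $\mathcal{C}=\emptyset$; thus $U_X$ has no cusps, $f(S)=\emptyset$, and $S=\emptyset$, contrary to assumption. (This route uses only the implication ``no core $\Rightarrow$ non-discrete,'' not Margulis \cite{margulis1991discrete}, and does not invoke Theorem \ref{Theorem:one_clump}, though that theorem would permit first reducing to a single clump.)

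The step I expect to be the main obstacle is the first translation above: one must genuinely use that $f$ and $g$ are étale \emph{everywhere}, and must track carefully the identification of the deleted points of $X,Y,Z$ with the cusps of the uniformized surfaces, so that a nonempty clump really does force $\Delta\neq\{1\}$. A secondary point needing care is the Lefschetz-principle reduction, namely that ``no core'' is unchanged under extension of a characteristic-$0$ base field. It should also be possible to give a proof that stays algebraic throughout by working with the pair $(\mathcal{G}_{gen},A)$ directly, with ``$\mathcal{G}_{gen}$ infinite'' playing the role of ``$\Gamma$ dense'' and the étale clump producing a finite sub-structure that $A$ is forced to normalize.
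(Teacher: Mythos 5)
Your proof is correct, but it takes a genuinely different route from the paper's. The paper deduces the statement from Corollary \ref{Corollary:no_invariant_sections_char_0}: a clump $S$ (automatically \'etale here) produces the invariant line bundle $\mathscr{L}(S)$ of positive degree with an invariant section; finiteness of $Pic^{0}(X\leftarrow Z\rightarrow Y)$ (Lemma \ref{Lemma:No_abelian_subvariety}) shows a power of $\mathscr{L}(S)$ is a power of $\Omega$, and the cyclic cover trick plus Hodge symmetry (Corollary \ref{Corollary:no_intersection_hodge}) rules out invariant pluricanonical forms. You instead puncture along the clump, uniformize the resulting finite-area hyperbolic surfaces, and observe that \'etaleness at $S$ forces $(\Gamma_1)_\xi=(\Gamma_2)_\xi=(\Gamma_3)_\xi$ at every parabolic fixed point $\xi$, so the group $\Delta$ generated by cusp stabilizers is normalized by $\Gamma=\langle\Gamma_1,\Gamma_2\rangle$; since ``no core'' makes $\Gamma$ non-discrete hence dense, and $\Delta$ is discrete, simplicity of $PSL_2(\mathbb{R})$ (or just connectedness, which forces a discrete normal subgroup to be central) kills $\Delta$ and hence the cusps. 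I checked the key steps: the identification of ramification at a deleted point with the index of cusp stabilizers is correct, the genus bounds guaranteeing hyperbolicity of the punctured curves are correct, and ``no core $\Rightarrow$ $\langle\Gamma_1,\Gamma_2\rangle$ non-discrete'' is exactly the observation underlying Theorem \ref{Theorem:Mochizuki}. The one reduction you rightly flag --- that ``no core'' descends to a finitely generated subfield and then transfers to $\mathbb{C}$ --- is not literally covered by Proposition \ref{Proposition:having_a_core_invariant_under_extension} (which treats algebraic extensions), but it follows by spreading out a putative core and specializing, and the paper itself relies on the same Lefschetz-principle reduction in Corollary \ref{Corollary:no_intersection_hodge}. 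What each approach buys: the paper's argument is uniform in the characteristic and yields the stronger Theorem \ref{Theorem:one_clump} (uniqueness of the \'etale clump in characteristic $p$), together with the dictionary between clumps and invariant sections; your argument is confined to characteristic $0$ but avoids the Picard-scheme machinery entirely, does not invoke Margulis, and gives Corollary \ref{Corollary:cusps_are_ramified} immediately as the special case where the clump is the set of cusps.
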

We unfurl this statement. Think of a symmetric Hecke correspondence
$X\leftarrow Z\rightarrow X$ of Shimura curves over $\mathbb{C}$
as a many-valued function from $X$ to $X$. Then the iterated orbit
of any point $x\in X$ under this many-valued function is unbounded.
This was likely already known, but we couldn't find it in the literature.
We nonetheless believe our approach is new. We now briefly describe
the sections.

In Section \ref{Section:Correspondences_and_cores} we state Mochizuki's
Theorem (Theorem \ref{Theorem:Mochizuki}). We then reprise the theme:
\textquotedbl{}are all étale correspondences without a core related
to Hecke correspondences of Shimura varieties?\textquotedbl{} in Question
\ref{Question:No_core_Shimura}. The phrase ``related to'' is absolutely
vital, and étale correspondences without a core do not always directly
deform to characteristic 0. We will see one example in Remark \ref{Remark:Igusa}
via Igusa level structure. More exotic is Example \ref{Example:Central_Leaf}
of a central leaf in a Hilbert modular variety; according to a general
philosophy of Chai-Oort, these leaves should also be considered Shimura
varieties in characteristic $p$. Unlike in characteristic 0, however,
these may deform in families purely in characteristic $p$. There
are also examples of étale correspondences of curves over $\overline{\mathbb{F}}_{p}$
without a core using Drinfeld modular curves. We pose a concrete instantiation
of Question \ref{Question:No_core_Shimura} that doesn't mention Shimura
varieties at all (Question \ref{Question:many_hecke}.)

In Section \ref{Sec:A_Recursive_Tower}, starting from a correspondence
without a core, we use elementary Galois theory to construct an infinite
tower of curves $W_{\infty}$ with \textquotedbl{}function field\textquotedbl{}
$E_{\infty}$. We use this tower to prove that the property of \textquotedbl{}not
having a core\textquotedbl{} for an étale correspondences specializes
in families (Lemma \ref{Lem:hyperbolic_specialization}.)

In section \ref{Section:Generic_graph}, given a correspondence without
a core, we construct the pair $(\mathcal{G}_{gen},A)$ of an infinite
graph together with a large group of ``algebraic'' automorphisms.
The graph $\mathcal{G}_{gen}$ packages the Galois theory of $E_{\infty}$
and reflects the generic dynamics of the correspondence. We are especially
interested in Question \ref{Question:no_core_a_tree?}: given an étale
correspondence without a core, is $\mathcal{G}_{gen}$ a tree? Using
Serre's theory of groups acting on trees \cite{serre1977arbres},
we prove that in this case the action of $A^{PQ}$ on $\mathcal{G}_{gen}$
shares several properties with the action of the $l$-adic linear
group $PSL_{2}(\mathbb{Q}_{l})$ on its building (see Proposition
\ref{Proposition:tree_amalgamated_free_product} and Corollary \ref{corollary:max_compact}.)

In Section \ref{Section:Symmetric_correspondences} we develop some
basic results for symmetric correspondences. We are interested in
the following refinement of Question \ref{Question:no_core_a_tree?}:
given a symmetric étale correspondence without a core, is the pair
$(\mathcal{G}_{gen},A)$ $\infty$-transitive (Question \ref{Question:infty_transitive})?
In the case of a symmetric, type (3,3) correspondence without a core,
we are able to verify this using graph theory due to Tutte (Lemma
\ref{Lemma:free_33}); in particular, in this case $\mathcal{G}_{gen}$
is a tree.

In Section \ref{Section:Specialization_of_Graphs} we construct specialization
maps $\mathcal{G}_{gen}\rightarrow\mathcal{G}_{phys}$. These roughly
specialize the dynamics from the generic point to closed points. When
the original correspondence is étale, the maps $\mathcal{G}_{gen}\rightarrow\mathcal{G}_{phys}$
are covering spaces of graphs (Lemma \ref{Lemma:etale_specialization}.)
Motivated by work of Kohel and Sutherland on \emph{Isogeny Volcanoes}
of elliptic curves \cite{kohel1996endomorphism,sutherland2013isogeny},
we speculate on the behavior and asymptotics of these specialization
maps in Question \ref{Question:Special_orbit}.

The rest of the paper may be read independently. In Section \ref{Section:Invariant_Line_Bundles}
we introduce the notion of an \emph{invariant line bundle }on a correspondence
and prove several results about their spaces of sections on (étale)
correspondences without a core. In characteristic 0 there are no \emph{invariant
pluricanonical differential forms} (Proposition \ref{Corollary:no_invariant_sections_char_0}.)
In characteristic $p$, however, there may be such forms. The existence
of the \emph{Hasse invariant}, a mod-$p$ modular form, is a representative
example of the difference. The key to these results is the introduction
of the group scheme $Pic^{0}(X\leftarrow Z\rightarrow Y)$; when $X\leftarrow Z\rightarrow Y$
does not have a core, we prove that this group scheme is finite (Lemma
\ref{Lemma:No_abelian_subvariety}.) We speculate on the relationship
between \emph{invariant differential forms} and $Pic^{0}(X\leftarrow Z\rightarrow Y)$
in Question \ref{Question:invariant_forms_nonreduced_pic}.

In Section \ref{Section:Clumps_and_bundles} we show that an \emph{étale
clump} gives rise to an invariant line bundle together with a line
of invariant sections. Using the analysis in Section \ref{Section:Invariant_Line_Bundles},
we prove the two sample theorems above and explicate the relationship
between our result and that of Hallouin-Perret. We wonder if every
étale correspondence of projective curves without a core in characteristic
$p$ has a clump, equivalently an invariant pluricanonical differential
form (Question \ref{Question:clump_exist?}.) 

We briefly comment on Chapter 3 of my thesis \cite{krishnamoorthy2016dynamics}
(see also the forthcoming article \cite{krishnamoorthyrank2companion}.)
Let $(X\overset{f}{\leftarrow}Z\overset{g}{\rightarrow}X)$ by a symmetric
type (3,3) étale correspondence without a core over a finite field
$\mathbb{F}_{q}$. Inspired by the formal similarity between the pair
$(\mathcal{G}_{gen},A^{PQ})$ and $(\mathcal{T},PSL_{2}(\mathbb{Q}_{2})),$where
$\mathcal{T}$ is the building of $PGL_{2}(\mathbb{Q}_{2})$ (i.e.
the infinite trivalent tree), we assume that the action of $G_{P}$
on $\mathcal{G}_{gen}$ is isomorphic to the action of $PSL_{2}(\mathbb{Z}_{2})$
on $\mathcal{T}$, a \emph{purely group-theoretic condition}. Call
the associated $\mathbb{Q}_{2}$ local system $\mathscr{L}$. Then,
using 2-adic group theory we prove that $f^{*}\mathscr{L}\cong g^{*}\mathscr{L}$.
Suppose further that all Frobenius traces of $\mathscr{L}$ are in
$\mathbb{Q}$. Using a recent breakthrough in the $p$-adic Langlands
correspondence for curves over a finite field due to Abe \cite{abe2013langlands},
we build the following correspondence.
\begin{thm*}
\label{Theorem:p-div_companion_versal} Let $C$ be a smooth, geometrically
irreducible, complete curve over $\mathbb{F}_{q}$. Suppose $q$ is
a square. There is a natural bijection between the following two sets.
\[
\left\{ \begin{alignedat}{1} & \overline{\mathbb{Q}_{l}}\text{-local systems }\mathscr{L}\text{ on }C\text{ such that}\\
 & \bullet\mathscr{L}\text{ is irreducible of rank 2}\\
 & \bullet\mathscr{L}\text{ has trivial determinant}\\
 & \bullet\text{The Frobenius traces are in }\mathbb{Q}\\
 & \bullet\mathscr{L}\text{ has infinite image,}\\
 & \text{up to isomorphism}
\end{alignedat}
\right\} \longleftrightarrow\left\{ \begin{aligned} & p\text{-divisible groups }\mbox{\ensuremath{\mathscr{G}}}\text{ on }C\text{ }\text{ such that }\\
 & \bullet\mathscr{G}\text{ has height 2 and dimension 1}\\
 & \bullet\mathscr{G}\text{ is generically versally deformed}\\
 & \bullet\mathscr{G}\text{ has all Frobenius traces in }\mathbb{Q}\\
 & \bullet\mathscr{G}\text{ has ordinary and supersingular points,}\\
 & \text{up to isomorphism}
\end{aligned}
\right\} 
\]
such that if $\mathscr{L}$ corresponds to $\mathcal{G}$, then $\mathscr{L}\otimes\mathbb{Q}_{l}(-1/2)$
is compatible with the $F$-isocrystal $\mathbb{D}(\mathscr{G})\otimes\mathbb{Q}$.
\end{thm*}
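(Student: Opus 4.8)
The plan is to deduce this statement from the crystalline companion correspondence for curves over finite fields (made available by Abe \cite{abe2013langlands}), read through crystalline Dieudonné theory. I will describe the two maps and then explain why they are mutually inverse. Given $\mathscr{L}$ in the left-hand set, I would first form the half Tate twist $\mathscr{L}' := \mathscr{L}\otimes\mathbb{Q}_l(-1/2)$; the hypothesis that $q$ is a square enters here, so that $\mathbb{Q}_l(-1/2)$, and later its $p$-adic avatar $\mathbb{Q}_p(-1/2)$, are defined over $\mathbb{F}_q$ and the twist preserves rationality of Frobenius traces. Then $\mathscr{L}'$ is irreducible of rank $2$, pure of weight $1$ (Lafforgue), with $\det\mathscr{L}' = \det\mathscr{L}\otimes\mathbb{Q}_l(-1) = \mathbb{Q}_l(-1)$ and all Frobenius traces in $\mathbb{Q}\subset\overline{\mathbb{Q}_p}$. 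Abe's Langlands correspondence for curves \cite{abe2013langlands} --- together with its compatibility with local Langlands at each place, which matches characteristic polynomials of geometric Frobenius --- then produces a $p$-adic companion: an overconvergent $F$-isocrystal $\mathbb{D}$ on $C$ of rank $2$, compatible with $\mathscr{L}'$ in the sense of having the same Frobenius characteristic polynomial at every closed point. Compatibility forces $\det\mathbb{D} = \mathbb{Q}_p(-1)$, so the Newton slopes of $\mathbb{D}$ at each point sum to $1$ and lie in $\{(0,1),(1/2,1/2)\}$. Because $\mathscr{L}$ has infinite image, $\mathbb{D}$ is not isotrivial, and combining this with rank $2$, $\det\mathbb{D} = \mathbb{Q}_p(-1)$, the properness of $C$, and Grothendieck--Katz semicontinuity of Newton polygons, I would deduce that the Newton polygon genuinely jumps, so $\mathbb{D}$ is ordinary on a dense open $U\subset C$ and supersingular on a nonempty finite complement.

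\emph{Realizing $\mathbb{D}$ by a $p$-divisible group.} Since all slopes of $\mathbb{D}$ lie in $[0,1]$, I expect $\mathbb{D}\cong\mathbb{D}(\mathscr{G})\otimes\mathbb{Q}$ for a $p$-divisible group $\mathscr{G}$ on $C$, and I would produce one by building an integral Dieudonné crystal inside $\mathbb{D}$: over the ordinary locus $U$ the slope filtration splits canonically (Katz) and such a crystal is immediate, while the real work is to extend it across the finitely many supersingular points, where $\mathbb{D}$ is the bounded, non-split rank $2$ slope-$1/2$ object and the naive splitting is unavailable. Then de Jong's theory of the crystalline Dieudonné functor $\mathscr{G}\mapsto\mathbb{D}(\mathscr{G})$, which is fully faithful and by which a Dieudonné crystal of the right shape arises from a $p$-divisible group, yields an honest $\mathscr{G}$ with Dieudonné isocrystal $\mathbb{D}$. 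It has height $2$ and dimension $1$ (from the rank and slope filtration of $\mathbb{D}$), Frobenius traces in $\mathbb{Q}$, and both ordinary and supersingular points; normalizing within its isogeny class by the requirement that its Kodaira--Spencer map be generically an isomorphism --- ``generically versally deformed'', which is the incarnation of the non-isotriviality of $\mathbb{D}$ --- one obtains the object landing in the right-hand set.

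\emph{The reverse map and bijectivity.} Conversely, given $\mathscr{G}$ in the right-hand set, its Dieudonné isocrystal $\mathbb{D}(\mathscr{G})\otimes\mathbb{Q}$ is a rank $2$ $F$-isocrystal on the proper curve $C$, hence automatically overconvergent; its determinant is $\mathbb{Q}_p(-1)$ (up to a finite twist killed by the remaining conditions), and it is irreducible, for a reducible rank $2$ overconvergent $F$-isocrystal is an extension of isoclinic rank one pieces and therefore has constant Newton polygon, contradicting the presence of both ordinary and supersingular points. Abe's correspondence \cite{abe2013langlands} run backwards produces an $\ell$-adic companion $\mathscr{M}$, and untwisting, $\mathscr{L} := \mathscr{M}\otimes\mathbb{Q}_l(1/2)$, gives an object of the left-hand set: rank $2$, trivial determinant, irreducible, Frobenius traces in $\mathbb{Q}$ (again using that $q$ is a square), and of infinite image --- finite image would make $\mathscr{L}$ isotrivial and its crystalline companion isoclinic of slope $0$, contradicting the Newton jump. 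These two constructions are mutually inverse because a semisimple $\ell$-adic local system, respectively a semisimple overconvergent $F$-isocrystal, is determined by the family of its geometric Frobenius characteristic polynomials --- Chebotarev density on the $\ell$-adic side, the corresponding rigidity for $F$-isocrystals on the crystalline side --- so the companion with prescribed Frobenius data is unique, while $\mathbb{D}(\mathscr{G})\otimes\mathbb{Q}$ determines $\mathscr{G}$ up to isogeny and the versality normalization is meant to single out the isomorphism class within that isogeny class.

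\emph{Main obstacle.} I expect the crux to be the interface between Abe's \emph{overconvergent} $F$-isocrystal companions and the \emph{crystalline Dieudonné} picture: producing the integral Dieudonné crystal on $\mathbb{D}$ --- routine over the ordinary locus, genuinely delicate in extending it across the non-ordinary supersingular points --- and, on the bijectivity side, checking that ``generically versally deformed'' together with the behaviour at the supersingular points really does cut out a single isomorphism class inside each isogeny class rather than merely the isogeny class. A subsidiary point to pin down is the dictionary ``$\mathscr{L}$ has infinite image'' $\iff$ ``$\mathscr{G}$ is generically versally deformed with both ordinary and supersingular points'', which on a \emph{proper} curve rests on the coarse space of the moduli of ordinary $p$-divisible groups of height $2$ and dimension $1$ being affine. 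These matters are carried out in detail in \cite{krishnamoorthyrank2companion}; see also Chapter 3 of \cite{krishnamoorthy2016dynamics}.
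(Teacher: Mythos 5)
This theorem is not proved in the paper at all: it appears only in the introduction as an announcement of results from Chapter 3 of \cite{krishnamoorthy2016dynamics} and the forthcoming article \cite{krishnamoorthyrank2companion}, with the single indication that the proof uses Abe's $p$-adic Langlands correspondence. So there is no in-paper argument to compare yours against; your outline does follow the advertised strategy (half Tate twist, Abe's crystalline companion, crystalline Dieudonn\'e theory), and as a road map it is reasonable. But judged as a proof it has genuine gaps, and not only the ones you flag.

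The step I would single out as actually wrong, rather than merely deferred, is the deduction that the Newton polygon of the companion $\mathbb{D}$ jumps. You argue: infinite image $\Rightarrow$ $\mathbb{D}$ not isotrivial, and then invoke Grothendieck--Katz semicontinuity to conclude that $\mathbb{D}$ is generically ordinary with a nonempty supersingular locus. Semicontinuity only says the non-ordinary locus is closed; it says nothing about it being nonempty, and there do exist non-isotrivial rank $2$ $F$-isocrystals with determinant $\mathbb{Q}_p(-1)$ and constant Newton polygon $(0,1)$ (e.g.\ Dieudonn\'e isocrystals of everywhere-ordinary non-isotrivial Barsotti--Tate groups; overconvergent irreducibility is compatible with a convergent unit-root subobject). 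Ruling out the everywhere-ordinary case requires a genuine argument --- e.g.\ via the positivity of the Hodge bundle $\omega_{\mathscr{G}}$ forced by generic versal deformation together with the Hasse invariant being a section of $\omega_{\mathscr{G}}^{p-1}$ --- and this is circular in your ordering, since you use the finiteness of the supersingular locus before you have constructed $\mathscr{G}$. (The everywhere-supersingular case is genuinely excluded by infinite image, since isoclinic slope $1/2$ with rational traces forces all Frobenius eigenvalues of $\mathscr{L}$ to be roots of unity.) Beyond this, the two steps you yourself identify as the crux --- extending the integral Dieudonn\'e crystal across the supersingular points so that de Jong's functor applies, and showing that ``generically versally deformed'' picks out a unique isomorphism class inside each isogeny class (this is delicate: Igusa-type phenomena show that generically-but-not-everywhere versally deformed objects do occur) --- are precisely where the mathematical content of \cite{krishnamoorthyrank2companion} lies, and deferring them to that reference means the proposal establishes only the shape of the argument, not the theorem.
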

If $\mathscr{G}$ is \emph{everywhere versally deformed} on $X$,
Xia's work \cite{xia2013deformation} shows that the pair $(X,\mathscr{G})$
may be canonically lifted to characteristic 0. In this case the whole
correspondence is the reduction modulo $p$ of an étale correspondence
of Shimura curves. However, examples coming from Shimura curves with
Igusa level structure show that $\mathscr{G}$ may be generically
versally deformed without being everywhere versally deformed. For
more details, see my thesis \cite{krishnamoorthy2016dynamics}.

\textbf{Acknowledgments.} This work is an extension of Chapter 2 of
my PhD thesis at Columbia University. I am very grateful to Johan
de Jong, my former thesis advisor, for guiding this project and for
countless inspiring discussions. Ching-Li Chai read my thesis very
carefully and provided many illuminating corrections and remarks,
especially Example \ref{Example:Central_Leaf}; I thank him. I also
thank Aaron Bernstein, Ashwin Deopurkar, Remy van Dobben de Bruyn,
Hélène Esnault, Ambrus Pal, and especially Philip Engel for interesting
conversations on the topic of this article. During my time at Freie
Universität Berlin I have been funded by an NSF postdoctoral fellowship,
Grant No. DMS-1605825.

\section{Conventions, Notation, and Terminology}

We explicitly state conventions and notations. These are in full force
unless otherwise stated.
\begin{enumerate}
\item $p$ is a prime number and $q$ is a power of $p$.
\item $\mathbb{F}$ is fixed algebraic closure of $\mathbb{F}_{p}$.
\item A \emph{curve} $C$ over a field $k$ is a geometrically integral
scheme of dimension 1 over $k$. Unless otherwise explicitly stated,
we assume $C\rightarrow\text{Spec}(k)$ is smooth.
\item A morphism of curves $X\rightarrow Y$ over $k$ is a morphism of
$k$-schemes that is non-constant, finite, and generically separable.
\item A smooth curve $C$ over a field $k$ is said to be \emph{hyperbolic}
if $\text{Aut}_{\overline{k}}(C_{\overline{k}})$ is finite.
\item Given a field $k$, $\Omega$ will always be an algebraically closed
field of transcendence degree 1 over $k$.
\item In general, $X$, $Y$, and $Z$ will be a curves over $k$, with
$M=k(Z)$, $L=k(X)$, and $K=k(Y)$ the function fields. We \emph{fix
}a\emph{ }$k$-algebra embedding $PQ:k(Z)\hookrightarrow\Omega$ that
identifies $\Omega$ as an algebraic closure of $k(Z)$.
\end{enumerate}

\section{\label{Section:Correspondences_and_cores}Correspondences and Cores}
\begin{defn}
A smooth curve $X$ over a field $k$ is said to be hyperbolic if
$\text{Aut}_{\overline{k}}(X_{\overline{k}})$ is finite.
\end{defn}
This is equivalent to the usual criterion of $2g-2+r\geq1$ where
$g$ is the geometric genus of the compactification $\overline{X}$
and $r$ is the number of geometric punctures. Over the complex numbers,
this is equivalent to $X$ being uniformized by the upper half plane
$\mathbb{H}$.
\begin{lem}
If $X\rightarrow Y$ is a non-constant morphism of curves over $k$
where $Y$ is hyperbolic, then $X$ is hyperbolic.
\end{lem}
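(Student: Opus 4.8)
The plan is to reduce to the algebraically closed case and then apply Riemann--Hurwitz together with an elementary count of punctures, invoking the numerical criterion $2g-2+r \geq 1$ for hyperbolicity recalled just above.

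First I would base change along $k \hookrightarrow \overline{k}$: the induced map $X_{\overline{k}} \to Y_{\overline{k}}$ is again a non-constant morphism of curves, $Y_{\overline{k}}$ is still hyperbolic, and hyperbolicity of $X$ is tested over $\overline{k}$, so I may assume $k = \overline{k}$. By Convention~(4) the morphism $X \to Y$ is finite and generically separable; let $\phi \colon \overline{X} \to \overline{Y}$ denote the induced finite separable morphism of smooth projective models, of some degree $n \geq 1$, and let $S_X = \overline{X} \setminus X$ and $S_Y = \overline{Y}\setminus Y$ be the puncture sets, of cardinalities $r_X$ and $r_Y$. Since $\phi$ restricts to $X \to Y$ on the dense open $X$, we have $X \subseteq \phi^{-1}(Y)$, hence $\phi^{-1}(S_Y) \subseteq S_X$ and in particular $r_X \geq \#\phi^{-1}(S_Y)$. (Because $X \to Y$ is required to be \emph{finite} one can even check $X = \phi^{-1}(Y)$, so that $r_X = \#\phi^{-1}(S_Y)$, but only the inequality is needed.)

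Next I would run Riemann--Hurwitz for $\phi$, namely $2g_X - 2 = n(2g_Y - 2) + \deg R$ with $R$ the effective ramification divisor, and use the bound $\deg R \geq \sum_{P}(e_P - 1) \geq \sum_{v \in S_Y}\sum_{\phi(P) = v}(e_P - 1) = \sum_{v \in S_Y}\bigl(n - \#\phi^{-1}(v)\bigr)$, where $e_P$ is the ramification index at $P$. Adding $r_X \geq \#\phi^{-1}(S_Y)$ and rearranging gives
\[
2g_X - 2 + r_X \;\geq\; n(2g_Y - 2) + \sum_{v \in S_Y}\bigl(n - \#\phi^{-1}(v)\bigr) + \#\phi^{-1}(S_Y) \;=\; n\,(2g_Y - 2 + r_Y).
\]
Since $Y$ is hyperbolic, $2g_Y - 2 + r_Y \geq 1$, and $n \geq 1$, so $2g_X - 2 + r_X \geq 1$; this is precisely the criterion for $X$ to be hyperbolic, i.e.\ for $\text{Aut}_{\overline{k}}(X_{\overline{k}})$ to be finite.

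There is essentially no serious obstacle here; the one point to watch is wild ramification in characteristic $p$, but the only property of $\deg R$ used is $\deg R \geq \sum_P(e_P-1)$, which holds in all characteristics (wild ramification only increases $\deg R$), so the argument goes through unchanged. Alternatively one could argue by the classification of non-hyperbolic curves ($\mathbb{P}^1$, $\mathbb{A}^1$, $\mathbb{G}_m$, and elliptic curves) and check that a non-constant image of such a curve is again of this type by a short case analysis on genus and punctures; but the Riemann--Hurwitz computation above handles all cases uniformly and is what I would write up.
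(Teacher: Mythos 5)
Your argument is correct and complete. Note that the paper itself states this lemma without proof (it is recorded as an immediate consequence of the numerical criterion $2g-2+r\geq 1$ recalled just above it), so there is nothing to compare against; your Riemann--Hurwitz computation, with the observation that $\phi^{-1}(S_Y)\subseteq S_X$ and the characteristic-independent bound $\deg R\geq\sum_P(e_P-1)$, is exactly the standard justification one would supply, and it correctly uses Convention (4) to guarantee that the induced map of smooth projective models is finite and separable.
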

\begin{defn}
A \emph{correspondence of curves over $k$} is a diagram 

\[
\xymatrix{ & Z\ar[dl]_{f}\ar[dr]^{g}\\
X &  & Y
}
\]
of smooth curves over a field $k$ where $f$ and $g$ are finite,
generically separable morphisms. We call such a correspondence of
\emph{type $(d,e)$} if $\text{deg}f=d$ and $\deg g=e$. We call
such a correspondence \emph{étale} if both maps are étale. We call
a correspondence \emph{minimal} if the associated map $Z\rightarrow X\times Y$
is birational onto its image.
\end{defn}
To a correspondence we can associate a containment diagram of function
fields:
\[
\xymatrix{ & k(Z)\\
k(X)\ar[ur] &  & k(Y)\ar[ul]
}
\]
A correspondence is \emph{minimal} iff there is no proper subfield
of $k(Z)$ that contains both $k(X)$ and $k(Y)$.
\begin{rem}
Note that we require both $f$ and $g$ to be \emph{finite}; for instance,
strict open immersions are not permitted.
\end{rem}
\begin{defn}
\label{Definition:correspondence_having_a_core}We say a correspondence
$X\leftarrow Z\rightarrow Y$ of curves over $k$ has a\emph{ core}
if the intersection of the two function fields $k(X)\cap k(Y)$ has
transcendence degree 1 over $k$.
\end{defn}
\begin{rem}
\label{Remark:core_is_separable}If a correspondence has a core, then
$k(X)\cap k(Y)\subset k(Z)$ is a separable field extension. Indeed,
suppose it weren't. The morphisms $f$ and $g$ are generically separable.
Then at least one of the extensions $k(X)\cap k(Y)\subset k(X)$ or
$k(X)\cap k(Y)\subset k(Y)$ is inseparable. Suppose $k(X)\cap k(Y)\subset k(Y)$
is not separable. Then there exists an element $\lambda\in k(X)$
such that $\lambda\notin k(Y)$ but $\lambda^{p}\in k(Y)$. But $\lambda\in k(Z)$,
so $g$ is not separable, contrary to our original assumption
\end{rem}
Suppose $X\leftarrow Z\rightarrow Y$ is a correspondence of curves
over $k$ with a core. If $X$, $Y$, and $Z$ are projective, we
call the smooth projective curve $C$ associated to the field $k(X)\cap k(Y)$
(considered as a field of transcendence degree 1 over $k$) the \emph{coarse
core} of the correspondence if it exists. One may also define the
coarse core if $X$, $Y$, and $Z$ are affine, see Remark \ref{Remark:affine_core}.

In particular, a correspondence of curves over $k$ has a core if
and only if there exists a curve $C$ over $k$ with finite, generically
separable maps from $X$ and $Y$ such that the following diagram
commutes.
\[
\xymatrix{ & Z\ar[dl]_{f}\ar[dr]^{g}\\
X\ar[dr] &  & Y\ar[dl]\\
 & C
}
\]

\begin{rem}
\label{Remark:Bounded_orbit}Given a correspondence as above, consider
the following ``many-valued function'' $X\dashrightarrow X$ that
sends $x\in X$ to the multi-set $f(g^{-1}(g(f^{-1}(x))))$, i.e.
start with $x$, take all pre-images under $f$, take the image under
$g$, take all pre-images under $g$ and take the image under $f$.
Having a core guarantees that the dynamics of this many-valued function
are uniformly bounded.
\end{rem}
\begin{prop}
\label{Proposition:having_a_core_invariant_under_extension}Let $X\leftarrow Z\rightarrow Y$
be a correspondence of curves over $k$. Let $L$ be an algebraic
field extension of $k$ and $X_{L}\leftarrow Z_{L}\rightarrow Y_{L}$
the base-changed correspondence of curves over $L$. Then $X\leftarrow Z\rightarrow Y$
has a core if and only if $X_{L}\leftarrow Z_{L}\rightarrow Y_{L}$
has a core. 
\end{prop}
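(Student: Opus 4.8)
The plan is to reduce the statement to a comparison of transcendence degrees of intersections of function fields, and then to peel off the extension $L/k$ layer by layer.

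First I would record the routine compatibilities. Geometric integrality is stable under arbitrary ground-field extension, and smoothness, finiteness and generic separability are stable under base change, so $X_L\leftarrow Z_L\rightarrow Y_L$ is again a correspondence of curves over $L$. Since $L/k$ is algebraic, every element of $L$ is integral over $k$, hence over $k(Z)$; as $Z$ is geometrically integral, $k(Z)\otimes_k L$ is a domain, and being integral over the field $k(Z)$ it is itself a field. Therefore $L(Z_L)=k(Z)\otimes_k L$, and likewise $L(X_L)=k(X)\otimes_k L=:A$ and $L(Y_L)=k(Y)\otimes_k L=:B$, sitting inside $L(Z_L)$ as the composita $L\cdot k(X)$ and $L\cdot k(Y)$. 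Since $L/k$ is algebraic, a subfield of $L(Z_L)$ that contains $L$ has transcendence degree $1$ over $L$ iff it does over $k$, and $L(Z_L)$ has transcendence degree $1$ over $L$; so ``$X_L\leftarrow Z_L\rightarrow Y_L$ has a core'' is the same as ``$A\cap B$ has transcendence degree $1$ over $k$''.

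For ``$\Rightarrow$'', let $F=k(X)\cap k(Y)$ with $\mathrm{trdeg}_k F=1$. Then $L\cdot F\subseteq A\cap B\subseteq L(Z_L)$; the left term has transcendence degree $1$ over $L$ and the right term has transcendence degree $1$ over $L$, so $A\cap B$ has transcendence degree exactly $1$, and the base change has a core. For ``$\Leftarrow$'' I argue contrapositively: assume $F=k(X)\cap k(Y)$ is algebraic over $k$ and show $I:=A\cap B$ is algebraic over $k$. Writing $L$ as the filtered union of its finite subextensions $L_i/k$, the fields $A$, $B$ and their intersection $I$ are the corresponding filtered unions, and a filtered union of algebraic extensions of $k$ is algebraic over $k$; so I may assume $L/k$ is finite. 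Choosing a finite \emph{normal} extension $L'/k$ with $L\subseteq L'$ and using ``$\Rightarrow$'' for the extension $L'/L$, it suffices to prove the base change to $L'$ has no core; letting $L_s$ be the separable closure of $k$ in $L'$, so that $L_s/k$ is Galois and $L'/L_s$ is purely inseparable, and composing base changes, I reduce to two cases, which I treat over an arbitrary base field: (a) $L/k$ finite Galois, and (b) $L/k$ finite purely inseparable.

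In case (b), choose $p^N$ with $L^{p^N}\subseteq k$; then $A^{p^N}=k(X)^{p^N}\cdot L^{p^N}\subseteq k(X)$ and $B^{p^N}\subseteq k(Y)$, so $I^{p^N}\subseteq k(X)\cap k(Y)=F$ is algebraic over $k$, whence every element of $I$, having a $p^N$-th power algebraic over $k$, is itself algebraic over $k$. In case (a), let $G=\mathrm{Gal}(L/k)$ act on $L(Z_L)=k(Z)\otimes_k L$ through the second factor; its fixed field is $k(Z)$, it preserves $A$ and $B$, hence $I$, and since $G$-invariants commute with intersection, $I^G=k(X)\cap k(Y)=F$. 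By Artin's theorem the action of the finite group $G$ makes $I$ a finite (Galois) extension of its fixed field $F$, so $I/F$, and therefore $I/k$, is algebraic. This finishes the argument. The only subtle points, and where I expect the real work to lie, are in the converse: checking that $k(Z)\otimes_k L$ is genuinely a field so that the function fields of the base-changed curves are exactly these tensor products, and the Galois case, which relies on the standard but essential fact that a finite group acting on a field makes it algebraic over its fixed subfield; the rest is bookkeeping with transcendence degrees and with the tower $k\subseteq L_s\subseteq L'$.
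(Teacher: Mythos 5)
Your argument is correct, but it takes a genuinely different route from the paper. The paper also first reduces to $L/k$ finite, but then handles the nontrivial direction geometrically: given the coarse core $C$ of the base-changed correspondence, it applies Weil restriction of scalars, so the $L$-morphisms $X_L\rightarrow C$ and $Y_L\rightarrow C$ become $k$-morphisms $X\rightarrow\mathrm{Res}_{L/k}C$ and $Y\rightarrow\mathrm{Res}_{L/k}C$ commuting with the correspondence, and the common image is the desired core. You instead stay entirely inside the function fields: after identifying $L(X_L)$, $L(Y_L)$, $L(Z_L)$ with $k(X)\otimes_k L$, etc.\ (which is the one place where geometric integrality of the curves is genuinely used, to see these tensor products are fields), you pass to a finite normal closure, split it as a Galois extension followed by a purely inseparable one, and control the intersection $A\cap B$ in each case --- via Artin's lemma (a finite group acting on a field makes it finite over its fixed field, and invariants commute with the intersection) and via a power of Frobenius, respectively. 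Your approach is more elementary and self-contained, avoids existence of the coarse core and of Weil restriction (and the unstated verification in the paper that the image in $\mathrm{Res}_{L/k}C$ is a curve receiving non-constant maps), and makes the inseparable case explicit rather than absorbing it into the geometry; the paper's approach is shorter once one grants Weil restriction and has the virtue of producing an actual curve $C'$ over $k$ witnessing the core rather than only a transcendence-degree statement (though by the paper's Definition \ref{Definition:correspondence_having_a_core} the latter is all that is required). Both proofs are complete for the purposes of the proposition.
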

\begin{proof}
We may assume $X$, $Y$, and $Z$ are projective. We immediately
reduce to the case of $L/K$ being a finite extension. If $X\leftarrow Z\rightarrow Y$
has a core, then so does $X_{L}\leftarrow Z_{L}\rightarrow Y_{L}$,
so it remains to prove the reverse implication. Let $C$ be the coarse
core of $X_{L}\leftarrow Z_{L}\rightarrow Y_{L}$. Then the universal
property of Weil restriction of scalars applied to $\text{Res}_{L/K}C$
yields the following commutative diagram with non-constant morphisms.
\[
\xymatrix{ & Z\ar[dl]_{f}\ar[dr]^{g}\\
X\ar[dr] &  & Y\ar[dl]\\
 & \text{Res}_{L/K}C
}
\]
Taking the image of $X$ and $Y$ inside of $\text{\text{Res}}_{L/K}C$
allows us to conclude that $X\leftarrow Z\rightarrow Y$ had a core.
\end{proof}
\begin{rem}
In our conventions, a curve $C$ over a field $k$ is geometrically
integral. Therefore $k$ is algebraically closed inside of $k(C)$.
If $X\leftarrow Z\rightarrow Y$ is a correspondence of curves over
$k$ without a core, then the natural map $k\hookrightarrow k(X)\cap k(Y)$
is therefore an isomorphism, as $k(X)\cap k(Y)$ is a finite extension
of $k$ that is contained in $k(X)$.
\end{rem}
For most of this article, we focus on correspondences without a core.
General correspondences of curves will not have cores. Consider, for
instance a correspondence of the form
\[
\xymatrix{ & \mathbb{P}^{1}\ar[dl]_{f}\ar[dr]^{g}\\
\mathbb{P}^{1} &  & \mathbb{P}^{1}
}
\]
For general $f$ and $g$, the dynamics of the induced many-valued-function
$\mathbb{P}^{1}\dashrightarrow\mathbb{P}^{1}$ will be unbounded and
hence it will not have a core by Remark \ref{Remark:Bounded_orbit}.
When we restrict to étale correspondences without cores, there is
the following remarkable theorem of Mochizuki \cite{mochizuki1998correspondences}
(due in large part to Margulis \cite{margulis1991discrete}), which
is the starting point of this article.
\begin{thm}
\label{Theorem:Mochizuki}\cite{mochizuki1998correspondences} If
$X\leftarrow Z\rightarrow Y$ is an étale correspondence of hyperbolic
curves without a core over a field $k$ of characteristic 0 , then
$X,Y,$ and $Z$ are all Shimura (arithmetic) curves (see Definitions
2.2, 2.3 of loc. cit.)
\end{thm}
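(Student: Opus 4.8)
The plan is to reduce to $k=\mathbb{C}$, pass to the uniformization picture of Fuchsian groups, and deduce the statement from Margulis's characterization of arithmetic lattices by their commensurators. First I would reduce to $k=\mathbb{C}$: the whole correspondence is defined over a finitely generated subfield $k_{0}\subseteq k$, the hypotheses ``\'etale'' and ``hyperbolic'' are geometric, and ``has no core'' is stable under extension of the base field (for algebraic extensions this is Proposition~\ref{Proposition:having_a_core_invariant_under_extension}; in general $k(X)\cap k(Y)\subseteq k(Z)$ is compatible with the flat base change of function fields), so after choosing an embedding $k_{0}\hookrightarrow\mathbb{C}$ a standard Lefschetz-principle argument lets me assume $k=\mathbb{C}$. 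Now uniformize: since $X$, $Y$, $Z$ are hyperbolic their universal covers are each $\mathbb{H}$, and the finite \'etale maps $f\colon Z\to X$ and $g\colon Z\to Y$ are topological covering maps; fixing one identification $\widetilde{Z}\cong\mathbb{H}$, the deck groups become torsion-free lattices $\Gamma_{Z}\subseteq\Gamma_{X}$ and $\Gamma_{Z}\subseteq\Gamma_{Y}$ in $PSL_{2}(\mathbb{R})$ of indices $\deg f$, $\deg g$, with $X=\Gamma_{X}\backslash\mathbb{H}$, $Y=\Gamma_{Y}\backslash\mathbb{H}$, $Z=\Gamma_{Z}\backslash\mathbb{H}$. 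In particular $\Gamma_{X}$ and $\Gamma_{Y}$ are commensurable in $PSL_{2}(\mathbb{R})$.

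The next step is to recognize ``having a core'' group-theoretically. In the uniformization picture $\mathbb{C}(X)$, $\mathbb{C}(Y)$, $\mathbb{C}(Z)$ are the fields of algebraic (i.e.\ meromorphic-on-the-compactification) functions on $\mathbb{H}$ invariant under $\Gamma_{X}$, $\Gamma_{Y}$, $\Gamma_{Z}$ respectively, so $\mathbb{C}(X)\cap\mathbb{C}(Y)$ is exactly the field of such functions invariant under $\Lambda:=\langle\Gamma_{X},\Gamma_{Y}\rangle$. If $\Lambda$ is discrete then, since it contains the lattice $\Gamma_{X}$, it is itself a lattice and this field is $\mathbb{C}(\Lambda\backslash\mathbb{H})$, of transcendence degree $1$. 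If $\Lambda$ is not discrete, its closure is a closed subgroup of $PSL_{2}(\mathbb{R})$ whose identity component is positive-dimensional (a $1$-parameter subgroup, a Borel, or all of $PSL_{2}(\mathbb{R})$), and in each case $\mathbb{H}$ carries no non-constant invariant meromorphic function, so the field is just $\mathbb{C}$. Hence the correspondence has a core in the sense of Definition~\ref{Definition:correspondence_having_a_core} if and only if $\Lambda$ is discrete.

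Finally, apply Margulis. Since $\Gamma_{X}$ and $\Gamma_{Y}$ are commensurable, both lie in the commensurator $\mathrm{Comm}_{PSL_{2}(\mathbb{R})}(\Gamma_{X})$, and therefore so does $\Lambda$. Margulis's theorem \cite{margulis1991discrete} states that a lattice in $PSL_{2}(\mathbb{R})$ is arithmetic if and only if its commensurator is dense, and in particular that a non-arithmetic lattice has \emph{discrete} commensurator. So if $\Gamma_{X}$ were not arithmetic, $\mathrm{Comm}_{PSL_{2}(\mathbb{R})}(\Gamma_{X})$ would be discrete, its subgroup $\Lambda$ would be discrete, and by the previous paragraph the correspondence would have a core --- contradicting the hypothesis. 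Thus $\Gamma_{X}$ is arithmetic, i.e.\ $X$ is a Shimura (arithmetic) curve; the identical argument with $\Gamma_{Y}$ in place of $\Gamma_{X}$ (using $\mathrm{Comm}(\Gamma_{Y})=\mathrm{Comm}(\Gamma_{X})$) handles $Y$, and since $\Gamma_{Z}$ has finite index in $\Gamma_{X}$ and arithmeticity is a commensurability invariant, $Z$ is a Shimura curve too.

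The hard part is imported: it is Margulis's commensurator rigidity for Fuchsian groups (``arithmetic $\iff$ dense commensurator''), a deep theorem even in this rank-one setting, and it is exactly the input that forces Shimura curves to appear. The remaining steps --- the reduction to $\mathbb{C}$, the curve/Fuchsian-group dictionary, and the fact that no non-constant algebraic function on $\mathbb{H}$ is invariant under a non-discrete subgroup of $PSL_{2}(\mathbb{R})$ --- are routine. (If one allowed $X,Y,Z$ to carry non-trivial orbifold structure the groups $\Gamma_{X},\Gamma_{Y}$ could acquire torsion, but this is harmless; for honest smooth curves they are torsion-free and $X$ is literally a finite \'etale cover of a Shimura curve.)
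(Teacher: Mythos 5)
Your proposal is correct and follows exactly the route the paper itself sketches (and that Mochizuki's cited proof takes): Lefschetz reduction to $\mathbb{C}$, the dictionary between the correspondence and a pair of commensurable Fuchsian groups $\Gamma_{X},\Gamma_{Y}$, the observation that having a core is equivalent to discreteness of $\langle\Gamma_{X},\Gamma_{Y}\rangle\subseteq\mathrm{Comm}(\Gamma_{X})$, and Margulis's commensurator criterion for arithmeticity. The only step you treat more lightly than it deserves is the invariance of ``having no core'' under the transcendental base extension $k_{0}\hookrightarrow\mathbb{C}$ (Proposition \ref{Proposition:having_a_core_invariant_under_extension} covers only algebraic extensions), but this is a standard linear-disjointness argument and is at the same level of detail as the paper's own sketch.
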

\begin{rem}
\label{Remark:no_defs}Theorem \ref{Theorem:Mochizuki} in particular
implies that if $X\leftarrow Z\rightarrow Y$ is an étale correspondence
of complex hyperbolic curves, then all of the curves and maps can
be defined over $\overline{\mathbb{Q}}$. In particular, there are
\emph{no non-trivial deformations} of étale correspondences of hyperbolic
curves without a core over $\mathbb{C}$. This last point fails over
$\mathbb{F}$; we will see examples, explained by Ching-Li Chai, later
in Example \ref{Example:Central_Leaf}.
\end{rem}
The proof of Theorem \ref{Theorem:Mochizuki} comes down to a reduction
to $k\cong\mathbb{C}$ by the Lefschetz principle and an explicit
description, due to Margulis \cite{margulis1991discrete}, of the
arithmetic subgroups $\Gamma$ of $SL(2,\mathbb{R})$. Given a complex
hyperbolic curve $C$, fix a uniformization $\mathbb{H}\rightarrow C$
to obtain an embedding
\[
\Gamma:=\pi_{1}(C)\rightarrow SL(2,\mathbb{R})
\]
We say $\gamma\in SL(2,\mathbb{R})$ commensurates $\Gamma$ if the
discrete group $\gamma\Gamma\gamma^{-1}$ is commensurable with $\Gamma,$
i.e. their intersection is of finite index in both groups. Define
$Comm(\Gamma)$ to be the subgroup commensurating $\Gamma$ in $SL(2,\mathbb{R})$
and note that $\Gamma\subset Comm(\Gamma)$. Margulis has proved that
$\Gamma$ is arithmetic if and only if $[Comm(\Gamma):\Gamma]=\infty$,
see e.g. Theorem 2.5 of \cite{mochizuki1998correspondences}.
\begin{example}
The commensurater of $SL(2,\mathbb{Z})$ in $SL(2,\mathbb{R})$ is
$SL(2,\mathbb{Q})$. The modular curve $Y(1)=[\mathbb{H}/SL(2,\mathbb{Z})]$
is arithmetic.
\end{example}
\begin{xca}
\label{Exercise:2_isogeny_hecke_elliptic}The correspondence of non-projective
stacky modular curves $Y(1)\leftarrow Y_{0}(2)\rightarrow Y(1)$ does
not have a core. Here, $Y_{0}(2)$ is the moduli space of pairs of
elliptic curves $(E_{1}\overset{2:1}{\rightarrow}E_{2})$ with a given
degree-2 isogeny between them, and the maps send the isogeny to the
source and target elliptic curve respectively. Hint: to prove this
over the complex numbers, look at the ``orbits'' of $\tau\in\mathbb{H}$
as in Remark \ref{Remark:Bounded_orbit}.
\end{xca}
\begin{rem}
We comment on the definition of an \emph{arithmetic curve.} In Definitions
2.2 and 2.3 of \cite{mochizuki1998correspondences}, Mochizuki defines
two notions of arithmetic (complex) hyperbolic Riemann surface: Margulis
arithmeticity and Shimura arithmeticity. Margulis arithmeticity is
closer in spirit to the classical definition of a Shimura variety,
while Shimura arithmeticity is essentially given by the data of a
totally real field $F$ and a quaternion algebra $D$ over $F$ that
is split at exactly one of the infinite places. Proposition 2.4 then
proves these two definitions are equivalent. If $X$ is an arithmetic
curve and $Y\rightarrow X$ is a finite étale cover, then $Y$ is
manifestly arithmetic by either definition. In particular, the hyperbolic
Riemann surfaces associated to \emph{non-congruence} \emph{subgroups}
of $SL_{2}(\mathbb{Z})$ are arithmetic by definition.
\end{rem}
\begin{defn}
\label{Definition:moduli_fake_elliptic_curves}Let $D$ be an indefinite
non-split quaternion algebra over $\mathbb{Q}$ of discriminant $d$
and let $\mathcal{O}_{D}$ be a fixed maximal order. Let $k$ be a
field whose characteristic is prime to $d$. A \emph{fake elliptic
curve }is a pair $(A,i)$ of an abelian surface $A$ over $k$ and
an injective ring homomorphism $i:\mathcal{O}_{D}\rightarrow\text{End}_{k}(A)$.
The abelian surface $A$ is endowed with the unique principal polarization
such that the Rosati involution induces the canonical involution on
$\mathcal{O}_{D}$. 
\end{defn}
Just as one can construct a modular curve parameterizing elliptic
curves, there is a Shimura curve $X^{D}$ parameterizing fake elliptic
curves with multiplication by $\mathcal{O}_{D}$. Over the complex
numbers, these are compact hyperbolic curves. Explicitly, if one chooses
an isomorphism $D\otimes\mathbb{R}\cong M_{2\times2}(\mathbb{R})$,
look at the image of $\Gamma=\mathcal{O}_{D}^{1}$ of elements of
$\mathcal{O}_{D}^{*}$ of norm 1 (for the standard norm on $\mathcal{O}_{D}$)
inside of $SL(2,\mathbb{R})$. This is a discrete subgroup and in
fact acts properly discontinuously and cocompactly on $\mathbb{H}$.
The quotient $[\mathbb{H}/\Gamma]$ is the Shimura curve associated
to $\mathcal{O}_{D}$. There is a notion of isogeny of fake elliptic
curves which is required to be compatible with the $\mathcal{O}_{D}$
structure and the associated ``fake degree'' of an isogeny. See
Buzzard \cite{buzzard1997integral} or Boutot-Carayol \cite{boutot1991uniformisation}
for more details. These definitions allow us the define Hecke correspondences
as in the elliptic modular case. For instance, as long as 2 splits
in $D$, one can define the correspondence
\[
\xymatrix{ & X_{0}^{D}(2)\ar[dl]_{\pi_{1}}\ar[dr]^{\pi_{2}}\\
X^{D} &  & X^{D}
}
\]
where $X_{0}^{D}(2)$ parametrizes pairs of fake elliptic curves $(A_{1}\rightarrow A_{2})$
with a given isogeny of fake degree 2 between them and $\pi_{1}$
and $\pi_{2}$ are the projections to the source and target respectively.
This is an example of an étale correspondence of (stacky) hyperbolic
curves without a core. To get an example without orbifold points,
one can add auxiliary level structure by picking an open compact subgroup
$K\subset\mathbb{A}^{f}$ of the finite adeles. This correspondence
is in fact defined over $\mathbb{Z}[\frac{1}{2S}]$ for an integer
$S$ and so may be reduced modulo $p$ for almost all primes.

Motivated by these examples, the orienting question of this article
is to explore characteristic $p$ analogs of Mochizuki's theorem.
More specifically, we wish to understand the abstract structure of
étale correspondences of hyperbolic curves without a core.
\begin{question}
\label{Question:No_core_Shimura}Let $k$ be a field of characteristic
$p$. If $X\leftarrow Z\rightarrow Y$ is an étale correspondence
of hyperbolic curves over $k$ without a core, then is it related
to a Hecke correspondence of Shimura varieties or Drinfeld modular
varieties?
\end{question}
\begin{rem}
In Corollary \ref{Corollary:reduce_to_F} we will in some sense reduce
Question \ref{Question:No_core_Shimura} to the analogous question
with $k=\mathbb{F}$.
\end{rem}
The clause ``is related to'' in Question \ref{Question:No_core_Shimura}
is absolutely vital as we will see in the following examples. Nonetheless
we take Question \ref{Question:No_core_Shimura} as a guiding principle. 
\begin{rem}
\label{Remark:Igusa}There are examples of étale correspondence of
hyperbolic curves without a core that should not deform to characteristic
0. Consider, for instance, the Hecke correspondence
\[
\xymatrix{ & Y_{0}(2)\ar[dl]_{\pi_{1}}\ar[dr]^{\pi_{2}}\\
Y(1) &  & Y(1)
}
\]
of modular curves over $\mathbb{F}_{p}$, $p\neq2$. By definition,
there is a universal elliptic curve $\mathcal{E}\rightarrow Y(1)$.
Let $\mathcal{G}=\mathcal{E}[p^{\infty}]$ be the associated $p$-divisible
group over $Y(1)$. Note that $\pi_{1}^{*}\mathcal{G}\cong\pi_{2}^{*}\mathcal{G}$.
Let $X$ be the cover of $Y(1)$ that trivializes the finite flat
group scheme $\mathcal{G}[p]_{\acute{e}t}$ away from the supersingular
locus of $Y(1)$. $X$ is branched exactly at the supersingular points.
Let $Z$ be the analogous cover of $Y_{0}(2)$. Then we have an étale
correspondence
\[
\xymatrix{ & Z\ar[dl]\ar[dr]\\
X &  & X
}
\]
which does not have a core (the dynamics of an ordinary point are
unbounded) and morally one does not expect this correspondence to
lift to characteristic 0. This construction is referred to as adding
\emph{Igusa level structure }in the literature: Ulmer's article \cite{ulmer1990universal}
is a particularly lucid account of this story for modular curves.
See Definition 4.8 of Buzzard \cite{buzzard1997integral} for the
analogous construction for Shimura curves parameterizing fake elliptic
curves. We take up the example of Igusa curves once again in Example
\ref{Example:igusa_form}, from the perspective of the Hasse invariant
and the cyclic cover trick.
\end{rem}
Modular curves with Igusa level structure still parametrize elliptic
curves with some (purely characteristic $p$) level structure. Ching-Li
Chai has provided the following more exotic example which shows that
étale correspondence of hyperbolic curves over a field of characteristic
$p$ may deform purely in characteristic $p$.
\begin{example}
\label{Example:Central_Leaf}Let $F$ be a totally real cubic field
and let $p$ be an inert prime. Consider the Hilbert modular threefold
$\mathcal{X}^{F}$ associated to $\mathcal{O}_{F}$; $\mathcal{X}^{F}$
parametrizes abelian threefolds with multiplication by $\mathcal{O}_{F}$.
Let $X$ be the reduction of $\mathcal{X}^{F}$ modulo $p$ and $\mathscr{A}$
be the universal abelian scheme over $X$. Oort has constructed a
foliation on such Shimura varieties \cite{oort2004foliations}; a
leaf of this foliation has the property that the $p$-divisible group
$\mathscr{A}[p^{\infty}]$ is geometrically constant on the leaf;
i.e., if $x$ and $y$ are two geometric points of the leaf, then
$\mathscr{A}[p^{\infty}]_{x}\cong\mathscr{A}[p^{\infty}]_{y}$. We
list the possible slopes of a height 6, dimension 3, symmetric $p$-divisible
group.
\begin{enumerate}
\item $(0,0,0,1,1,1)$
\item $(0,0,\frac{1}{2},\frac{1}{2},1,1)$
\item $(0,\frac{1}{2},\frac{1}{2},\frac{1}{2},\frac{1}{2},1)$
\item $(\frac{1}{3},\frac{1}{3},\frac{1}{3},\frac{2}{3},\frac{2}{3},\frac{2}{3})$
\item $(\frac{1}{2},\frac{1}{2},\frac{1}{2},\frac{1}{2},\frac{1}{2},\frac{1}{2})$
\end{enumerate}
The only slope types that could possibly admit multiplication (up
to isogeny) by $\mathbb{Q}_{p^{3}}$ are 1, 4, and 5 by considerations
on the endomorphism algebra. By de Jong-Oort purity, the locus where
the slope type $(\frac{1}{3},\frac{1}{3},\frac{1}{3},\frac{2}{3},\frac{2}{3},\frac{2}{3})$
occurs inside of $X$ is codimension 1. One can prove that a central
leaf with this Newton Polygon is a curve. Central leafs of Hilbert
modular varieties have the property that they are preserved under
$l$-adic Hecke correspondences and that in fact the $l$-adic monodromy
is as large as possible \cite{chai2005monodromy}. In particular,
a central leaf has many Hecke correspondences. Moreover, as this Newton
polygon stratum has dimension 2, the isogeny foliation is one-dimensional
and so this Hecke correspondence deforms in a one-parameter family,
purely in characteristic $p$. 
\end{example}
\begin{rem}
Chai and Oort have discussed the possibility that central leaves should
be considered Shimura varieties in characteristic $p$. In particular,
one could consider the example of a Hecke correspondence of a central
leaf to be a Hecke correspondence of Shimura curves. In any case,
both the examples of a Hecke correspondence of modular curves with
Igusa level structure and a Hecke correspondence of a central leaf
of dimension 1 map finitely onto a Hecke correspondence of Shimura
varieties which deform to characteristic 0.
\end{rem}
In my thesis, I phrased Question \ref{Question:No_core_Shimura} only
using Shimura varieties. Ambrus Pal has informed us that there are
examples of étale correspondences of Drinfeld modular curves (i.e.
moduli spaces of $\mathcal{D}$-elliptic modules) over $\mathbb{F}$
without a core. All three of these examples have moduli interpretations.
Moreover, they all have \emph{many} Hecke correspondences. This motivates
the following variant of Question \ref{Question:No_core_Shimura},
which does not mention Shimura/Drinfeld modular varieties at all.
\begin{question}
\label{Question:many_hecke}Let $X\leftarrow Z\rightarrow Y$ be an
étale correspondence of hyperbolic curves over $k$ without a core.
Do there exist infinitely many minimal étale correspondences between
$X$ and $Y$ without a core?
\end{question}

\section{\label{Sec:A_Recursive_Tower}A Recursive Tower}
\begin{defn}
Let $f:X\rightarrow Y$ be a finite, non-constant, generically separable
map of curves over a field $k$. We say $f$ is \emph{finite Galois}
if $|\text{Aut}_{Y}(X)|=\text{deg}(f)$. We say it is \emph{geometrically
finite Galois} if $f_{\overline{k}}:X_{\overline{k}}\rightarrow Y_{\overline{k}}$
is Galois.
\end{defn}
It is well-known that given a finite, generically separable map of
curves over a field $k$, we may take a Galois closure. In the projective
case, this is ``equivalent'' to taking a Galois closure of the associated
extension of function fields, and the affine case follows by the operation
of ``taking integral closure of the coordinate ring in the extension
of function fields.'' However, the output of the ``Galois closure''
operation will not necessarily be a \emph{curve over $k$} as in our
conventions, i.e. it won't necessarily be a geometrically integral
scheme over $k$, unless $k$ is separably closed. For instance, consider
the geometrically Galois morphism $\mathbb{P}_{\mathbb{Q}}^{1}\rightarrow\mathbb{P}_{\mathbb{Q}}^{1}$
given by $t\mapsto t^{3}$. This is not a Galois extension of fields,
and a Galois closure is $\mathbb{P}_{\mathbb{Q}(\zeta_{3})}^{1}$,
which is not a geometrically irreducible variety over $\mathbb{Q}$.
In the language of field theory, the field extension $\mathbb{Q}\subset\mathbb{Q}(\zeta_{3})(t)$
is not \emph{regular}. Therefore, when we take a Galois closure, we
implicitly extended the field $k$ if necessary to ensure that the
output is a \emph{curve over }$k$. 

We begin with a simple Galois-theoretic observation related to the
existence of a core.
\begin{lem}
\label{Lemma:No_Galois}Let $X\leftarrow Z\rightarrow Y$ be a correspondence
over a field $k$ where $Z$ is hyperbolic. A core exists if and only
if there exists a curve $W$, possibly after replacing $k$ by a finite
extension, together with a map $W\rightarrow Z$ such that the composite
maps $W\rightarrow X$ and $W\rightarrow Y$ are both finite Galois.
\end{lem}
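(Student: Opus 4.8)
The plan is to prove the two implications separately, working throughout with the function fields and only at the end translating back into curves (remembering the convention that "Galois closure" may force a finite extension of $k$).

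\textbf{($\Leftarrow$)} Suppose such a $W$ exists, so after replacing $k$ by a finite extension we have a curve $W$ with a map $W \to Z$ such that $W \to X$ and $W \to Y$ are both finite Galois. On function fields this says $L = k(X) \subset k(W)$ and $K = k(Y) \subset k(W)$ are Galois extensions, with Galois groups $G_X = \mathrm{Gal}(k(W)/L)$ and $G_Y = \mathrm{Gal}(k(W)/K)$. Let $H = \langle G_X, G_Y\rangle \subset \mathrm{Aut}(k(W))$ be the subgroup they generate; since $k(W)/k$ is finitely generated and $W$ is hyperbolic, $\mathrm{Aut}_{\overline k}(k(W))$ is finite, so $H$ is finite. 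The fixed field $k(W)^{H}$ is then contained in both $k(W)^{G_X} = L$ and $k(W)^{G_Y} = K$, hence $k(W)^H \subseteq L \cap K$. But $k(W)/k(W)^H$ is finite, so $k(W)^H$ has transcendence degree $1$ over $k$, and therefore so does $L \cap K$. By Proposition \ref{Proposition:having_a_core_invariant_under_extension}, having a core is insensitive to the finite extension of $k$, so the original correspondence has a core.

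\textbf{($\Rightarrow$)} Conversely, suppose $X \leftarrow Z \rightarrow Y$ has a core, so $L \cap K$ has transcendence degree $1$ over $k$; by Remark \ref{Remark:core_is_separable} the extensions $(L\cap K) \subset L$ and $(L \cap K) \subset K$ are separable, and they are finite (being subextensions of the finite separable extensions $L \subset M$, $K \subset M$). Pass to a finite extension of $k$, extending $M = k(Z)$ accordingly, so that the compositum inside $\Omega$ of a Galois closure of $L/(L\cap K)$ and a Galois closure of $K/(L\cap K)$ is a regular (geometrically integral) extension of $k$; call the resulting field $M'$, a finite separable extension of $M$, and let $W$ be the corresponding curve, which maps to $Z$. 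By construction $L \subset M'$ and $K \subset M'$ are each finite Galois, so $W \to X$ and $W \to Y$ are finite Galois as desired. (One must check $W \to Z$, hence $W \to X$ and $W \to Y$, is generically separable and finite: separability is automatic since we only adjoined roots of separable polynomials, and finiteness follows by taking integral closures in the affine case or passing to smooth projective models in the projective case.)

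\textbf{Main obstacle.} The genuine content is almost entirely bookkeeping about the field of definition: the "Galois closure" operation in our conventions does not return a curve over $k$ but over some finite extension $k'$, and one has to be careful that (i) this does not change whether a core exists (handled by Proposition \ref{Proposition:having_a_core_invariant_under_extension}), and (ii) the compositum of two Galois closures is again handled by the same remark. The hyperbolicity hypothesis on $Z$ is used exactly once, and essentially, in the ($\Leftarrow$) direction: it guarantees that the group generated by the two Galois groups inside $\mathrm{Aut}(k(W))$ is finite, so that its fixed field is a genuine core rather than something of transcendence degree $0$; without hyperbolicity the subgroup $\langle G_X, G_Y\rangle$ could be infinite and the argument collapses. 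I expect the step requiring the most care is verifying that the maps from $W$ remain finite and generically separable after these manipulations — routine, but the place where the conventions of Section 2 must be invoked explicitly.
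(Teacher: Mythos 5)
Your ($\Leftarrow$) direction is essentially the paper's argument transposed to function fields: the paper forms the quotient curve $W/I$ by the finite group $I$ generated by $\mathrm{Aut}(W/X)$ and $\mathrm{Aut}(W/Y)$, whose function field is exactly your fixed field $k(W)^{H}$. One small point: you assert that $W$ is hyperbolic without justification; this is where the hypothesis on $Z$ actually enters, via the unlabelled lemma at the start of Section \ref{Section:Correspondences_and_cores} ($W$ maps nonconstantly to the hyperbolic curve $Z$, hence is hyperbolic). You correctly identify that this is the essential use of hyperbolicity.

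The ($\Rightarrow$) direction has a genuine gap. You define $M'$ as the compositum of a Galois closure of $L/(L\cap K)$ with a Galois closure of $K/(L\cap K)$ and assert that $M'$ is a finite separable extension of $M=k(Z)$. That compositum contains $LK$, but not in general $M$: the lemma does not assume the correspondence is minimal, and $LK\subsetneq M$ precisely when $Z\rightarrow X\times Y$ fails to be birational onto its image. So the curve $W$ you produce need not admit the map $W\rightarrow Z$ that the statement requires. The paper sidesteps this by taking $W$ to be a Galois closure of $Z\rightarrow C$ itself, where $C$ is the coarse core --- i.e.\ the Galois closure of $M$ over $L\cap K$ (a separable extension by Remark \ref{Remark:core_is_separable}); this contains $M$ by construction and, being Galois over $L\cap K$, is automatically Galois over the intermediate fields $L$ and $K$. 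Replacing your $M'$ by this Galois closure repairs the argument; the rest of your bookkeeping about fields of constants and finite extensions of $k$ is fine.
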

\begin{proof}
Suppose such a curve $W$ existed. $W$ is hyperbolic because it maps
nontrivially to a hyperbolic curve. Then the groups $\text{Aut}(W/X)$
and $\text{Aut}(W/Y)$ are both subgroups of $\text{Aut}_{k}(W)$,
which is a finite group because $W$ is hyperbolic. The group $I$
generated by these Galois groups is therefore finite, and the curve
$W/I$ fits into a diagram:

\[
\xymatrix{ & W\ar[d]\\
 & Z\ar[dl]_{f}\ar[dr]^{g}\\
X\ar[dr] &  & Y\ar[dl]\\
 & W/I
}
\]
Therefore a core exists. Conversely, if the correspondence has a core,
call the coarse core $C$. Let $W$ be a Galois closure of the map
$Z\rightarrow C$, finitely extending the ground field if necessary.
Then the composite maps $W\rightarrow X$ and $W\rightarrow Y$ are
both Galois.
\end{proof}
\begin{cor}
\label{Corollary:Finite_No_Galois}Let $X\leftarrow Z\rightarrow Y$
be a correspondence of (possibly non-hyperbolic) curves over $\mathbb{F}$.
Then a core exists if and only if there exists a curve $W$ together
with a map $W\rightarrow Z$ such that the composite maps $W\rightarrow X$
and $W\rightarrow Y$ are both finite Galois.
\end{cor}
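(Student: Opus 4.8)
The plan is to deduce this from Lemma \ref{Lemma:No_Galois} by separating the hyperbolic and non-hyperbolic cases, using the special feature of the ground field $\mathbb{F}=\overline{\mathbb{F}}_p$ that it is algebraically closed (so no finite extension of the base is ever needed) and that curves over it are classified in a very rigid way. One direction is formal and identical to Lemma \ref{Lemma:No_Galois}: if such a $W$ exists, the group $I$ generated by $\mathrm{Aut}(W/X)$ and $\mathrm{Aut}(W/Y)$ inside $\mathrm{Aut}_{\mathbb{F}}(W)$ produces a quotient $W/I$ through which both $X$ and $Y$ map, hence a core — \emph{provided} $I$ is finite. When $W$ is hyperbolic this is automatic. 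So the real content is the non-hyperbolic case and the removal of the ``finite extension of $k$'' clause.

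First I would dispose of the case where $Z$ (equivalently $X$ and $Y$, since they are dominated... wait, they dominate from $Z$) is hyperbolic: here Lemma \ref{Lemma:No_Galois} applies directly, and since $k=\mathbb{F}$ is already separably (indeed algebraically) closed, the Galois closure of $Z\to C$ is automatically a geometrically integral curve over $\mathbb{F}$ — no base extension occurs — so the statement follows verbatim. Next, the non-hyperbolic case: $Z$, and hence $X$ and $Y$, are non-hyperbolic smooth curves over $\mathbb{F}$. A smooth non-hyperbolic curve over an algebraically closed field is one of $\mathbb{P}^1$, $\mathbb{A}^1$, $\mathbb{G}_m$, or an elliptic curve (genus $1$, complete) — finitely many shapes, each with well-understood finite covers. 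I would argue that in each such case a core always exists \emph{and} a Galois $W$ always exists, so the corollary's biconditional becomes ``true $\iff$ true.'' For instance if $X$, $Y$, $Z$ are all $\mathbb{P}^1$ or all $\mathbb{A}^1$ etc., one can exhibit a common Galois closure explicitly by taking the compositum of function-field Galois closures inside a fixed algebraic closure of $\mathbb{F}(Z)$; the group generated by the two Galois groups inside $\mathrm{Aut}$ of that compositum is finite because covers of the fixed non-hyperbolic curve of bounded degree are controlled (genus considerations via Riemann–Hurwitz, or directly: $\mathrm{Aut}$ of a finite cover of $\mathbb{A}^1$ etc. is an algebraic group of bounded type). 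Thus $W/I$ exists and gives a core.

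The subtlety — and the step I expect to be the main obstacle — is showing that the group $I$ generated by $\mathrm{Aut}(W/X)$ and $\mathrm{Aut}(W/Y)$ is finite in the non-hyperbolic setting: unlike the hyperbolic case there is no a priori finiteness of $\mathrm{Aut}_{\mathbb{F}}(W)$. I would handle this by noting that $\mathrm{Aut}(W/X)$ and $\mathrm{Aut}(W/Y)$ each preserve a finite nonempty set of points (the ramification or a marked fiber) and that their generated subgroup must act on $W$ compatibly with \emph{both} maps simultaneously; concretely, one shows the subfield of $\mathbb{F}(W)$ fixed by $I$ still has transcendence degree $1$ (it contains the compositum-related structure), so $W/I$ is a curve, and then $W\to W/I$ is finite, forcing $I$ finite. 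Alternatively, and perhaps more cleanly, one observes that a correspondence of non-hyperbolic curves over $\mathbb{F}$ \emph{always} has a core — this can be checked by the genus/Riemann–Hurwitz bookkeeping that $f$ and $g$ finite separable from $Z$ force $g_X, g_Y \le g_Z$, and a short case analysis on $(g_Z, \#\text{punctures})$ produces $C$ — so the corollary reduces to the already-known hyperbolic Lemma \ref{Lemma:No_Galois} plus the trivial observation that both sides of the biconditional hold unconditionally in the non-hyperbolic case over $\mathbb{F}$. I would present the argument in this second form, as it sidesteps the delicate group-finiteness point entirely.
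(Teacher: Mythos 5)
There is a genuine gap, and it is in the direction you identify as the real content: showing that the group $I$ generated by $\text{Aut}(W/X)$ and $\text{Aut}(W/Y)$ is finite when $W$ is not hyperbolic. The route you say you would actually present rests on the assertion that a correspondence of non-hyperbolic curves over $\mathbb{F}$ \emph{always} has a core, and this is false. The recursive-tower correspondences on $\mathbb{P}^{1}$ over finite fields (Garcia--Stichtenoth, and the correspondences studied by Hallouin--Perret, which the paper itself analyzes in Remark \ref{Remark:generalize_Perret_Hallouin}) are type $(d,d)$ correspondences of rational curves over $\mathbb{F}_{q}$ for which the iterated fiber products $C_{n}$ are irreducible of unboundedly growing degree over any candidate core; exactly as in that Remark, this forces the correspondence to have no core. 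So the ``true $\iff$ true'' reduction collapses, and no amount of Riemann--Hurwitz bookkeeping (which only gives $g_{X},g_{Y}\leq g_{Z}$, not a common quotient) will produce a core. Your fallback argument is circular: for a group $I$ of $\mathbb{F}$-automorphisms of a function field of transcendence degree $1$, the fixed field has transcendence degree $1$ \emph{if and only if} $I$ is finite (if the fixed field is big, the extension is finite and bounds $|I|$; if $I$ is infinite, the fixed field is $\mathbb{F}$, as in Example \ref{Example:non_hyperbolic_specialization}), so asserting the former is just restating what is to be proved. Likewise, $I$ does not preserve any single fiber or ramification locus: $\text{Aut}(W/X)$ preserves fibers of $W\to X$ but $\text{Aut}(W/Y)$ moves them.

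The missing idea --- which is the entire content of the Corollary beyond Lemma \ref{Lemma:No_Galois} --- is that over $\mathbb{F}=\overline{\mathbb{F}}_{p}$ everything in sight (the curves, the maps, and each of the finitely many elements of $\text{Aut}(W/X)$ and $\text{Aut}(W/Y)$) descends to a single finite field $\mathbb{F}_{q}$, and the group of automorphisms of a curve that are defined over $\mathbb{F}_{q}$ is finite (it is the set of $\mathbb{F}_{q}$-points of a finite-type automorphism scheme; equivalently, groups like $PGL_{2}(\mathbb{F})$, $\text{Aut}(\mathbb{A}^{1}_{\mathbb{F}})$, $\text{Aut}(E_{\mathbb{F}})$ are locally finite). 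Hence $I$ is finite, $W/I$ is a curve, and the rest of the proof of Lemma \ref{Lemma:No_Galois} goes through verbatim. Your first sketch brushes against this when you mention that automorphism groups of covers of $\mathbb{A}^{1}$ are ``algebraic groups of bounded type,'' but you then explicitly discard that line in favor of the false reduction, so the proof as proposed does not close.
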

\begin{proof}
The proof is almost exactly the same as that of Lemma \ref{Lemma:No_Galois}:
the key observation is that everything in sight (including every element
of $\text{Aut}(W/X)$ and $\text{Aut}(W/Y)$) may be defined over
some finite field $\mathbb{F}_{q}$; therefore the group they generate
inside of $\text{Aut}(W)$ consists of automorphisms defined over
$\mathbb{F}_{q}$ and is hence finite.
\end{proof}
\begin{rem}
\label{Remark:affine_core}Let $X\leftarrow Z\rightarrow Y$ be a
correspondence of affine curves with a core. We prove there exists
a curve $C$ together with \emph{finite, generically separable} maps
from $X$ and $Y$ making the square commute.

Let $\overline{X}\leftarrow\overline{Z}\rightarrow\overline{Y}$ be
the compactified correspondence, with coarse core $T$. Take a Galois
closure $\overline{W}$ of $\overline{Z}\rightarrow T$. Let $W$
be the affine curve associated to the integral closure of $k[Z]$
in $k(\overline{W})$. Then $W\rightarrow X$ and $W\rightarrow Y$
are both finite Galois morphisms of affine curves. In fact, $\text{Aut}(W/X)=\text{Aut}(\overline{W}/\overline{X})$
and likewise for $Y$. The group $I$ generated by $\text{Aut}(W/X)$
and $\text{Aut}(W/Y)$ inside of $\text{Aut}(W)$ is precisely $\text{Aut}(W/T)=\text{Aut}(\overline{W}/T)$,
as $T$ was the coarse core of the projective correspondence. Set
$C=W/I$, the affine curve with coordinate ring $k[W]^{I}$. This
$C$ is the \emph{coarse core} of the correspondence of affine curves.
\end{rem}
\begin{example}
\label{Example:non_hyperbolic_specialization}Let us see the relevance
both of $Z$ being hyperbolic in Lemma \ref{Lemma:No_Galois} and
of the base field being $\mathbb{F}$ in Corollary \ref{Corollary:Finite_No_Galois}.
Let $Z=\mathbb{P}_{\mathbb{F}_{p}(t)}^{1}$ and consider the following
finite subgroups of $PGL(2,\mathbb{F}_{p}(t))$: $H_{1}$ is generated
by the unipotent element $\left(\begin{matrix}1 & t\\
0 & 1
\end{matrix}\right)$ and $H_{2}$ is generated by the unipotent element $\left(\begin{matrix}1 & 0\\
t & 1
\end{matrix}\right)$. Quotienting $Z$ gives a correspondence $Z/H_{1}\leftarrow Z\rightarrow Z/H_{2}$.
Both arrows are Galois, but there is evidently no core because the
subgroup of $PGL(2,\mathbb{F}_{p}(t))$ generated by $H_{1}$ and
$H_{2}$ is infinite. Note that for every specialization of $t\in\mathbb{F}$,
the correspondence does in fact have a core, for instance by Corollary
\ref{Corollary:Finite_No_Galois}.
\end{example}
Let $X\leftarrow Z\rightarrow Y$ be a correspondence of curves without
a core where $Z$ is hyperbolic or where $k\cong\mathbb{F}$. We perform
the following iterative procedure: take a Galois closure of $Z\rightarrow Y$
and call it $W_{Y}$. Because we assumed a core does not exist, the
associated map $W_{Y}\rightarrow X$ cannot be Galois by Lemma \ref{Lemma:No_Galois}
(resp. Corollary \ref{Corollary:Finite_No_Galois}). Take a Galois
closure of this map and call it $W_{YX}$. Again, the associated map
$W_{YX}\rightarrow Y$ cannot be Galois, so we can take a Galois closure
to obtain $W_{YXY}$. Continuing in the fashion, we get an inverse
system of curves $W_{YX\dots}$ over the correspondence. 
\begin{equation}
\xymatrix{ & W_{YX\dots}\ar[d]\\
W_{YXY}\ar[drr] & \vdots\\
 &  & W_{YX}\ar[dll]\\
W_{Y}\ar[dr]\\
 & Z\ar[dl]_{f}\ar[dr]^{g}\\
X &  & Y
}
\label{Tower}
\end{equation}
Note that $W_{YX\dots}$ is Galois over $Z$. In fact, $W_{YX\dots}$
is Galois over both $X$ and $Y$ because there is a final system
of Galois subcovers for each. Note that this procedure may involve
algebraic extensions of the field $k$.

We explicate the based function-field perspective on this construction:
let 
\[
\xymatrix{ & M\\
L\ar[ur] &  & K\ar[ul]
}
\]
be the associated diagram of function fields, where $M=k(Z)$, $L=k(X)$,
and $K=k(Y)$. Recall that $k\overset{\sim}{\rightarrow}L\cap K$;
this is exactly the condition that correspondence does not have a
core.

Pick an algebraic closure $\Omega$ of $M$, i.e. let $\Omega$ be
an algebraically closed field of transcendence degree 1 over $k$
and pick \emph{once and for all} an embedding of $k$-algebras $PQ:M\hookrightarrow\Omega$.
(The notation will be justified later, when $PQ$ will correspond
to an edge of a graph.) Let $E_{K}$ be the Galois closure of $M/K$
in $\Omega$. Then $E_{K}/L$ is no longer Galois by Lemma \ref{Lemma:No_Galois}
(resp. Corollary \ref{Corollary:Finite_No_Galois}). Let $E_{KL}$
be the Galois closure of $E_{K}/L$ in $\Omega$. Continuing in this
fashion, we get an infinite algebraic field extension $E_{KL\dots}$
of $M$, Galois over both $L$ and $K$. 
\begin{lem}
\label{Lemma:final_towers}$W_{YX\dots}$ is isomorphic to $W_{XY\dots}$
as $Z$-schemes. That is, by reversing the roles of $X$ and $Y$
we get mutually final systems of Galois covers.
\end{lem}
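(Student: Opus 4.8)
The plan is to identify both towers with the same subfield of the fixed algebraic closure $\Omega$ of $M$. Precisely, I claim that $E_{KL\dots}$ and $E_{LK\dots}$ are \emph{both} equal to the smallest subfield of $\Omega$ that contains $M$ and is Galois over both $L$ and $K$. Granting this, $E_{KL\dots}=E_{LK\dots}$ as subfields of $\Omega$; since $W_{YX\dots}$ has function field $E_{KL\dots}$ and $W_{XY\dots}$ has function field $E_{LK\dots}$, and each tower is reconstructed from its function field by normalizing $Z$ in the successive finite subextensions (in the projective case by taking smooth projective models, in the affine case by taking the integral closure of $k[Z]$, as in Remark~\ref{Remark:affine_core}), the equality of function fields yields an isomorphism $W_{YX\dots}\cong W_{XY\dots}$ of $Z$-schemes. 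This is unambiguous because, once $PQ\colon M\hookrightarrow\Omega$ is fixed, every Galois closure in the construction is canonically a subfield of $\Omega$, and the constant-field extensions that appear are harmless since each stage has field of constants contained in the algebraic closure of $k$ inside $\Omega$.

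First I would check that $E_{KL\dots}$ is Galois over both $L$ and $K$; the symmetric argument (reversing the roles of $X$ and $Y$) then shows the same for $E_{LK\dots}$. Separability over $L$ and over $K$ is automatic, since $M/L$ and $M/K$ are separable ($f$ and $g$ being generically separable), Galois closures of separable extensions are separable, and a directed union of separable extensions is separable. For normality, write $E_{KL\dots}=\bigcup_n E_{w_n}$ where $w_n$ ranges over the finite prefixes of the word $KLKL\cdots$. The prefixes ending in $K$ form a cofinal family of subfields, each Galois and hence normal over $K$; any finite subextension of $E_{KL\dots}/K$ is contained in one of them, so all of its $K$-conjugates lie inside $E_{KL\dots}$, whence $E_{KL\dots}/K$ is normal. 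The prefixes ending in $L$ treat normality over $L$ identically. So $E_{KL\dots}$ is Galois over both $L$ and $K$.

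Second, I would prove the universal property. Let $F\subseteq\Omega$ be any subfield containing $M$ with $F/L$ and $F/K$ both Galois. Since $M\subseteq F$ and $F/K$ is normal, $F$ contains the normal closure of $M/K$ in $\Omega$, namely $E_K$; since then $E_K\subseteq F$ and $F/L$ is normal, $F$ contains the normal closure of $E_K/L$, namely $E_{KL}$; iterating, $F\supseteq E_{w_n}$ for every prefix, hence $F\supseteq E_{KL\dots}$. So $E_{KL\dots}$ is the minimal such subfield, and by symmetry so is $E_{LK\dots}$. Applying the minimality of $E_{KL\dots}$ with $F=E_{LK\dots}$ (which is legitimate by the first step) gives $E_{KL\dots}\subseteq E_{LK\dots}$, and the reverse inclusion follows by symmetry; hence $E_{KL\dots}=E_{LK\dots}$, and the isomorphism of $Z$-schemes follows as explained in the first paragraph.

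I do not expect a genuine obstacle here: the argument is formal Galois theory once everything is placed inside the single ambient field $\Omega$. The only steps that merit a line of care are the ``directed union of Galois extensions is Galois'' argument in the first step and the bookkeeping that the constant-field extensions occurring in the construction are mutually compatible --- both of which are straightforward precisely because each stage of each tower is canonically a subfield of $\Omega$.
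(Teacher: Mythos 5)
Your proof is correct and rests on the same mechanism as the paper's: the iterated minimality of Galois closures, i.e.\ $E_{K}\subset E_{LK}$, hence $E_{KL}\subset E_{LKL}$, and so on. The only organizational difference is that you first establish the universal characterization of $E_{KL\dots}$ as the minimal subfield of $\Omega$ containing $M$ that is Galois over both $L$ and $K$ (which is exactly Corollary \ref{Corollary:Intrinsic_min_galois_both}, recorded in the paper as a consequence of the lemma) and then specialize to $F=E_{LK\dots}$, whereas the paper runs the same induction directly between the two towers.
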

\begin{proof}
Equivalently, we must show that $E_{KL\dots}=E_{LK\dots}$ as subfields
of $\Omega$. First, note that $E_{K}\subset E_{LK}$ because $E_{K}$
is the minimal extension of $E$ in $\Omega$ that is Galois over
$K$. Similarly, $E_{KL}\subset E_{LKL}$ because $E_{KL}$ is the
minimal extension of $E_{K}$ in $\Omega$ that is Galois over $L$.
Continuing, we see that $E_{KL\dots}\subset E_{LK\dots}$. By symmetry,
the reverse inclusion holds as desired.
\end{proof}
\begin{cor}
\label{Corollary:Intrinsic_min_galois_both}The field extension $E_{KL\dots}=E_{LK\dots}$
of $M$, thought of as a subfield of $\Omega$, is characterized by
the property that it is the minimal field extension of $M$ inside
of $\Omega$ that is Galois over both $L$ and $K$. 
\end{cor}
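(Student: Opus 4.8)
The plan is to set $E_\infty := E_{KL\dots} = E_{LK\dots}$ (the equality being Lemma \ref{Lemma:final_towers}) and to verify the two halves of the claimed characterization separately: first that $E_\infty$ itself is Galois over both $L$ and $K$, and second that \emph{any} subfield $F \subseteq \Omega$ with $M \subseteq F$ which is Galois over both $L$ and $K$ must contain $E_\infty$. Together these say precisely that $E_\infty$ is the minimal field inside $\Omega$ that is Galois over both $L$ and $K$.

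For the first half I would exhibit $E_\infty$ as the increasing (directed) union $E_K \subseteq E_{KL} \subseteq E_{KLK} \subseteq \cdots$ of the finite stages of the tower. Separability over $L$ and over $K$ is automatic: every stage is built by iterating the "Galois closure" operation starting from the generically separable (hence separable) extensions $M/L$ and $M/K$, and separability is preserved under directed unions. For normality over $L$, I would pick out the subtower consisting of those stages obtained as a Galois closure over $L$, namely $E_{KL} \subseteq E_{KLKL} \subseteq \cdots$; this subtower is cofinal in the full tower, each of its terms is by construction normal over $L$, and a directed union of normal algebraic extensions is normal, so $E_\infty/L$ is Galois. Running the same argument with the cofinal subtower $E_K \subseteq E_{KLK} \subseteq \cdots$ of stages that are Galois closures over $K$ shows $E_\infty/K$ is Galois.

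For the second half, suppose $F \subseteq \Omega$ contains $M$ and is Galois over both $L$ and $K$. Since $F/K$ is normal and $\Omega$ is algebraically closed, $F$ contains every $K$-conjugate (inside $\Omega$) of every element of $M$, hence contains the Galois closure of $M$ over $K$ in $\Omega$, which is $E_K$. Now $F \supseteq E_K$ and $F/L$ is normal, so by the same reasoning $F$ contains the Galois closure of $E_K$ over $L$, which is $E_{KL}$. Iterating, alternately invoking normality of $F$ over $L$ and over $K$, gives $F \supseteq E_{KL\dots}$ at every finite stage, hence $F \supseteq E_\infty$, as required.

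I do not expect a real obstacle here: this is elementary Galois theory once the towers are identified via Lemma \ref{Lemma:final_towers}. The only point needing mild care is the bookkeeping in the first half — correctly identifying the two cofinal subtowers (stages normal over $L$, respectively over $K$) and citing the standard fact that a directed union of normal extensions is normal. If one prefers, normality of $E_\infty$ over $L$ can instead be extracted from the same "$F$ contains all conjugates" argument applied to $F = E_\infty$, so this step can be arranged to reuse the machinery of the second half.
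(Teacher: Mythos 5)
Your proposal is correct and matches the paper's (largely implicit) argument: Galoisness of $E_\infty$ over $L$ and over $K$ via the two cofinal subtowers of stages that are by construction Galois closures over $L$, respectively over $K$, and minimality by iterating the observation (already used in the proof of Lemma \ref{Lemma:final_towers}) that each $E_{K}$, $E_{KL}$, \dots is the minimal extension of the previous stage that is Galois over the relevant base. Nothing further is needed.
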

For brevity, we denote the inverse system $W_{XYX\dots}$ by $W_{\infty}$.
Let $E_{\infty}$ be the associated function field, considered as
a subfield of $\Omega$. In what follows, unless otherwise specified
we consider $E_{\infty}\subset\Omega$ as inclusions of abstract $k$-algebras.
\begin{question}
\label{Question:Field_of_Constants}Does the ``field of constants''
of $E_{\infty}$ have finite degree over $k$? That is, does $E_{\infty}\otimes_{k}\overline{k}$
decompose as an algebra to be the product of finitely many fields?
What if the original correspondence is étale?
\end{question}
If $\text{Gal}(k^{sep}/k)$ is abelian and $\text{Gal}(E_{\infty}/K)$
has finite abelianization, then Question \ref{Question:Field_of_Constants}
has an affirmative answer. In particular, this applies if $k\cong\mathbb{F}_{q}$
and $\text{Gal}(E_{\infty}/K)$ is a semi-simple $l$-adic group.
We will see in Proposition \ref{Proposition:if_clump_then_bounded_field_of_constants}
that if the correspondence has an \emph{étale clump} , then Question
\ref{Question:Field_of_Constants} has an affirmative answer using
the following remark.
\begin{rem}
\label{Remark:Many maps from W_Y to Z}In Diagram \ref{Tower}, the
morphism $W_{Y}\rightarrow Z$ is Galois. By precomposing with $\text{Aut}(W_{Y}/Z)$,
we equip $W_{Y}$ with $\text{Aut}(W_{Y}/Z)$-many maps to $Z$. More
generally, all curves $W_{YX\dots X}$ will be naturally equipped
with $\text{Aut}(W_{YX\dots X}/Z)$-many maps to $Z$ via precompositions
by Galois automorphisms. This will be useful in Remark \ref{Remark:W_infty_and_G_phys},
when we try to explicitly understand the curves $W_{YX\dots Y}$.
\end{rem}
The following lemma allows us to \emph{specialize }étale correspondences
without a core.
\begin{lem}
\label{Lem:hyperbolic_specialization}Let $S$ be an irreducible scheme
of finite type with generic point $\eta$. Let $X$, $Y$, and $Z$
be proper, smooth, geometrically integral curves over $S$. Suppose
$Z$ is ``hyperbolic'' over $S$; that is, the genus of a fiber
is at least 2. Let $X\leftarrow Z\rightarrow Y$ be an finite étale
correspondence of schemes commuting with the structure maps to $S$.
If $s$ is a geometric point of $S$ such that
\[
\xymatrix{ & Z_{s}\ar[dl]\ar[dr]\\
X_{s} &  & Y_{s}
}
\]
has a core, then $X_{\eta}\leftarrow Z_{\eta}\rightarrow Y_{\eta}$
has a core.
\end{lem}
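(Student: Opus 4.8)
The plan is to prove the statement directly: assuming the geometric fiber $Z_s$ carries a core, I produce a core for the generic fiber $Z_\eta$. The idea is to manufacture, over $\bar s$, the finite étale cover that witnesses a core via Lemma~\ref{Lemma:No_Galois}, to spread it out over a strictly henselian base using the comparison of étale fundamental groups for proper smooth families, and then to restrict it to the generic point.

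\emph{Reduction to a strictly henselian local base.} Replacing $S$ by $S_{\mathrm{red}}$ changes neither the geometric fibers nor $k(\eta)$, so assume $S$ integral. Let $s_0\in S$ be the image of $s$; it is a specialization of $\eta$. Set $R=\mathcal O_{S,s_0}^{sh}$, choose a minimal prime $\mathfrak p\subset R$ contracting to $(0)$ in $\mathcal O_{S,s_0}$ (one exists since $R$ is faithfully flat over $\mathcal O_{S,s_0}$), and put $R'=R/\mathfrak p$, a strictly henselian local domain whose fraction field $K'=k(\eta')$ is a separable algebraic extension of $k(\eta)$, and whose closed point receives the geometric point $\bar s:=s$. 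Base change the whole situation to $\operatorname{Spec}R'$. By Proposition~\ref{Proposition:having_a_core_invariant_under_extension}, since $K'/k(\eta)$ is algebraic, it now suffices to show that $X_{\eta'}\leftarrow Z_{\eta'}\rightarrow Y_{\eta'}$ has a core, while $Z_{\bar s}$ still carries a core.

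\emph{The cover over the closed fiber, and its spreading out.} Apply the recursive tower of Section~\ref{Sec:A_Recursive_Tower} to $X_{\bar s}\leftarrow Z_{\bar s}\rightarrow Y_{\bar s}$. Because this correspondence is étale, every Galois closure occurring in the tower is again finite étale, so each stage is a connected finite étale cover of $Z_{\bar s}$; as $Z_{\bar s}$ has a core, Lemma~\ref{Lemma:No_Galois} together with Corollary~\ref{Corollary:Intrinsic_min_galois_both} shows the tower stabilizes at a connected finite étale cover $W\to Z_{\bar s}$ whose composites $W\to X_{\bar s}$ and $W\to Y_{\bar s}$ are finite Galois. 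Now $Z/S'$, $X/S'$, $Y/S'$ are proper and smooth with geometrically connected fibers over the strictly henselian $\operatorname{Spec}R'$, so restriction to the closed fiber is an equivalence of finite étale categories; in particular $W$ extends uniquely to a connected finite étale cover $\mathcal W\to Z_{S'}$ with $\mathcal W_{\bar s}=W$, the composites $\mathcal W\to X_{S'}$ and $\mathcal W\to Y_{S'}$ are again finite étale, and the identifications $\operatorname{Aut}_{X_{\bar s}}(W)\cong\operatorname{Aut}_{X_{S'}}(\mathcal W)$ and $\operatorname{Aut}_{Y_{\bar s}}(W)\cong\operatorname{Aut}_{Y_{S'}}(\mathcal W)$ (with degrees unchanged on the connected base) force both of these composites to be finite Galois.

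\emph{Descent to the generic point.} The morphism $\mathcal W\to S'$ is smooth and proper with connected total space; its Stein factorization is finite étale over the strictly henselian $S'$, hence trivial, so $\mathcal W\to S'$ has geometrically connected fibers and $\mathcal W_{\eta'}$ is a curve over $K'$. Restricting the finite Galois composites to $\eta'$ preserves them: restriction of automorphisms along $\mathcal W_{\eta'}\hookrightarrow\mathcal W$ is injective and the degree is unchanged, so the numerical criterion $|\operatorname{Aut}|=\deg$ still holds. Since $Z_{\eta'}$ is hyperbolic (fibers have genus $\geq 2$), Lemma~\ref{Lemma:No_Galois} gives that $X_{\eta'}\leftarrow Z_{\eta'}\rightarrow Y_{\eta'}$ has a core, and then Proposition~\ref{Proposition:having_a_core_invariant_under_extension} transports this down to $X_\eta\leftarrow Z_\eta\rightarrow Y_\eta$. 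The main obstacle is the fundamental-group comparison over a strictly henselian base combined with the stability of ``finite Galois'' under first extending a cover from the closed fiber to the whole family and then restricting to the generic point; the remaining work is bookkeeping (arranging an integral strictly henselian local base and controlling the residue field extension so Proposition~\ref{Proposition:having_a_core_invariant_under_extension} applies). An alternative is to spread $W$ out over an étale neighborhood of $s_0$ and argue along a suitable trait, but the strict-henselization formulation keeps the input cleanest.
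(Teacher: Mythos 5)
Your proof is correct, and it takes a genuinely different route from the paper's. The paper argues by contradiction from the generic fiber downward: it reduces to a DVR with algebraically closed residue and fraction fields, runs the iterated Galois-closure construction of Section \ref{Sec:A_Recursive_Tower} on the finite \'etale correspondence of schemes over the whole base, and observes that if the generic fiber had no core the degrees of the tower would grow without bound, forcing some special fiber $(W_{YX\dots Y})_{s}$ to disconnect (since the special correspondence has a core, its connected bi-Galois covers have bounded degree by Lemma \ref{Lemma:No_Galois}); Zariski's connectedness theorem then gives the contradiction. You instead argue directly from the special fiber upward: you extract the single witnessing cover $W\to Z_{\bar s}$ with both composites Galois, transport it across the family using the equivalence of categories of finite \'etale covers of a proper scheme over a (strictly) henselian local base, and restrict to the generic point, checking that the numerical criterion $|\mathrm{Aut}|=\deg$ survives both the extension and the restriction by rigidity of \'etale covers over a connected base. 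The trade-off is that you invoke a heavier black box (the specialization equivalence for $F\acute{E}t$, i.e.\ proper base change for $\pi_{1}$) where the paper gets by with the softer Zariski connectedness principle; in exchange your argument is direct rather than by contradiction, needs only one cover rather than the whole tower over $S$, and isolates cleanly where properness and \'etaleness enter (both approaches correctly fail for the non-\'etale Example \ref{Example:non_hyperbolic_specialization}). The only points worth polishing are bookkeeping ones you already flag: the passage between the closed fiber over the separably closed residue field of $R'$ and the geometric fiber $Z_{\bar s}$ uses topological invariance of the \'etale site plus Proposition \ref{Proposition:having_a_core_invariant_under_extension}, and the connectedness of $\mathcal W_{\eta'}$ via the \'etale Stein factorization is exactly the right substitute for the paper's appeal to Zariski connectedness.
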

\begin{proof}
The property of \textquotedbl{}having a core\textquotedbl{} does not
change under algebraic field extension by Proposition \ref{Proposition:having_a_core_invariant_under_extension}.
By dévissage, we reduce to the case of $S=\text{Spec}(R)$, where
$R$ is a discrete valuation ring with algebraically closed residue
field $\kappa$. Call the fraction field $K$. We may further replace
$R$ by its integral closure in $\overline{K}$ to get a valuation
ring having both the residue field and the fraction field algebraically
closed. We do this to not worry about the \textquotedbl{}extension
of ground field\textquotedbl{} question that is always present when
taking a Galois closure.

Call the generic point $\eta$ and the closed point $s$. First of
all $X_{\eta}\leftarrow Z_{\eta}\rightarrow Y_{\eta}$ is a correspondence
of curves over $\eta$. Let us suppose it does not have a core. Then
the process of iterated Galois closure, as detailed in Diagram \ref{Tower},
continues endlessly to produce a tower of curves over $\eta$. On
the other hand, any finite étale morphism has a Galois closure. This
implies that we can apply the construction of taking iterated Galois
closures to the \emph{finite étale correspondence of schemes} $X\leftarrow Z\rightarrow Y$
to build a tower $W_{YX\dots}$ over $S$. 

As $R$ has algebraically closed residue field and fraction field,
$W_{YX\dots X}$ is a smooth proper curve over $S$; in particular
the geometric fibers of the morphism $W_{YX\dots X}\rightarrow S$
are irreducible. Moreover, all of the maps $W_{YX\dots Y}\rightarrow Z$
are finite étale. In fact, the fiber of $W_{XY\dots X}$ over the
generic point $\eta$ of $W_{XY\dots X}$ is isomorphic, as a scheme
over $Z_{\eta}$, to corresponding curve in the tower associated to
the correspondence $X_{\eta}\leftarrow Z_{\eta}\rightarrow Y_{\eta}$
of curves over $\eta$. For instance, $(W_{Y})_{\eta}\rightarrow Y_{\eta}$
is a Galois closure of the finite étale morphism $Z_{\eta}\rightarrow Y_{\eta}$.
Therefore, if we could prove $(W_{YX\dots X})_{\eta}$ were disconnected,
we would get a contradiction with the original assumption that $X_{\eta}\leftarrow Z_{\eta}\rightarrow Y_{\eta}$
had no core.

The fact that the maps $Z\rightarrow X$, $Z\rightarrow Y$, and $W_{YX\dots Y}\rightarrow Z$
are finite étale implies that taking a Galois closure and then restricting
to $s$ yields a finite Galois étale cover of $Z_{s}$. For example,
$(W_{Y})_{s}$ is a (possibly disconnected) finite Galois cover of
$Y_{s}$ that maps surjectively to $(W_{s})_{Y_{s}}$, a Galois closure
of the map $Z_{s}\rightarrow Y_{s}$.

As the correspondence specialized to $s$ has a core, Lemma \ref{Lemma:No_Galois}
implies that there exists a curve $W_{YX\dots Y}$ of our tower over
$S$ such that the fiber $(W_{YX\dots Y})_{s}$ is disconnected. We
therefore have a smooth proper curve $W_{XY\dots Y}\rightarrow S$
such that the fiber over $s$ is disconnected. Zariski's connectedness
principle implies $(W_{YX\dots Y})_{\eta}$ is disconnected (this
is where we use properness), contradicting our original assumption
that $X_{\eta}\leftarrow Z_{\eta}\rightarrow Y_{\eta}$ had no core.
\end{proof}
\begin{rem}
Example \ref{Example:non_hyperbolic_specialization} shows that the
argument of Lemma \ref{Lem:hyperbolic_specialization} \emph{does
not work }if the correspondence is not assumed to be étale.
\end{rem}
\begin{cor}
\label{Corollary:reduce_to_F}We may ``reduce'' the study of Question
\ref{Question:No_core_Shimura} to where $k=\mathbb{F}$. That is,
given an étale correspondence of hyperbolic curves $X\leftarrow Z\rightarrow Y$
without a core over a field $k$ of characteristic $p$, we can specialize
to an étale correspondence without a core over $\mathbb{F}$. 
\end{cor}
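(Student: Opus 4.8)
The plan is to reduce the statement, by a spreading-out argument, to an application of Lemma \ref{Lem:hyperbolic_specialization} in its contrapositive form: once the correspondence is arranged as the generic fibre of a finite \'etale family over a finite-type base, the lemma says that if the generic fibre has no core then no geometric special fibre has a core, and the special fibres live over finite fields.

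First I would reduce to the case that $k$ is finitely generated over $\mathbb{F}_{p}$. Since $X$, $Y$, $Z$ and the maps $f$, $g$ are of finite presentation over $k$, there is a finitely generated subfield $k_{0}\subset k$ over which the whole correspondence is defined, say $X_{0}\leftarrow Z_{0}\rightarrow Y_{0}$, with base change along $k_{0}\hookrightarrow k$ recovering $X\leftarrow Z\rightarrow Y$; enlarging $k_{0}$ we may assume $X_{0}$, $Y_{0}$, $Z_{0}$ are geometrically integral, so that $k_{0}(Z_{0})$ is a regular (hence linearly disjoint from $k$) extension of $k_{0}$. If $X_{0}\leftarrow Z_{0}\rightarrow Y_{0}$ had a core, the base change of that core to $k$ would be a core of $X\leftarrow Z\rightarrow Y$ (geometric integrality is preserved, and a transcendental element of $k_{0}(X_{0})\cap k_{0}(Y_{0})$ stays transcendental over $k$ by linear disjointness); since the latter has none, neither does the former, and we may rename $k_{0}$ as $k$. (The curves here are taken proper, matching the hypotheses of Lemma \ref{Lem:hyperbolic_specialization}; in the affine hyperbolic case one works with the projective setting, as that lemma is stated for proper families.)

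Next I would spread out. Writing $k=\text{Frac}(A)$ for a finite-type domain $A$ over $\mathbb{F}_{p}$ and invoking the standard limit formalism, there is a dense open $S\subset\text{Spec}(A)$ and a finite \'etale correspondence $\mathcal{X}\leftarrow\mathcal{Z}\rightarrow\mathcal{Y}$ of smooth proper geometrically integral curves over $S$ whose fibre over the generic point $\eta=\text{Spec}(k)$ is $X\leftarrow Z\rightarrow Y$. Shrinking $S$ I may assume the genus of the fibres of $\mathcal{Z}\rightarrow S$ is constant; since $Z$ is a proper hyperbolic curve this genus is $\geq 2$, so $\mathcal{Z}$ is ``hyperbolic over $S$'' in the sense of Lemma \ref{Lem:hyperbolic_specialization} (and by Riemann--Hurwitz applied to the \'etale maps, $\mathcal{X}$ and $\mathcal{Y}$ have fibres of genus $\geq 2$ as well). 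Now I would pick a closed point $s_{0}\in S$: by the Nullstellensatz for finite-type $\mathbb{F}_{p}$-algebras its residue field is a finite field $\mathbb{F}_{q}$, and I fix a geometric point $s=\text{Spec}(\mathbb{F})\rightarrow S$ over $s_{0}$, identifying $\overline{\mathbb{F}_{q}}$ with $\mathbb{F}$.

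Finally I would apply the lemma. The fibre $\mathcal{X}_{\eta}\leftarrow\mathcal{Z}_{\eta}\rightarrow\mathcal{Y}_{\eta}$ is $X\leftarrow Z\rightarrow Y$, which has no core; since having a core is insensitive to algebraic base change by Proposition \ref{Proposition:having_a_core_invariant_under_extension}, the base change of this correspondence to $\overline{k}$ has no core either. By the contrapositive of Lemma \ref{Lem:hyperbolic_specialization}, the fibre $\mathcal{X}_{s}\leftarrow\mathcal{Z}_{s}\rightarrow\mathcal{Y}_{s}$ has no core, and it is an \'etale correspondence of smooth proper curves of genus $\geq 2$ --- hence hyperbolic --- over $\mathbb{F}$; that is the desired specialization. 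The genuinely substantive input is Lemma \ref{Lem:hyperbolic_specialization} itself, so I expect the only real care needed here is the bookkeeping of the spreading-out step: arranging that \emph{all} hypotheses of that lemma (smooth proper geometrically integral curves, finite \'etale maps over $S$, fibrewise genus $\geq 2$) hold simultaneously on a single open $S$, together with the reduction to a proper, finitely generated ground field.
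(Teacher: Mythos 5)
Your proposal is correct and follows the same route as the paper: the paper's proof is just ``by spreading out, we may ensure we are in the situation of Lemma \ref{Lem:hyperbolic_specialization}; then the nonexistence of a core implies the same for all geometric fibers,'' and you have simply filled in the bookkeeping (descent to a finitely generated subfield, choice of a dense open $S$ over which all hypotheses of the lemma hold, and selection of a closed point with finite residue field).
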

\begin{proof}
By spreading out, we may ensure that we are in the situation of Lemma
\ref{Lem:hyperbolic_specialization}. Then the nonexistence of a core
implies the same for all of the geometric fibers by Lemma \ref{Lem:hyperbolic_specialization}.
\end{proof}
Lemma \ref{Lem:hyperbolic_specialization} says that for an étale
correspondence of projective hyperbolic curves, the property of ``not
having a core'' specializes. The converse is true in more generality
and is rather useful: it implies that one way to answer Question \ref{Question:No_core_Shimura}
is to directly lift the correspondence to characteristic 0.
\begin{lem}
\label{Lemma:generic_has_core_then_special_has_core}Let $S=\text{Spec}(R)$
be the spectrum of a discrete valuation ring with closed point $s$
and generic point $\eta$. Let $X$, $Y$, and $Z$ be smooth, projective,
geometrically irreducible curves over $S$ and let $X\leftarrow Z\rightarrow Y$
be a correspondence of schemes, commuting with the structure maps
to $S$, that is a correspondence of curves when restricted to $s$
and to $\eta$. If over $\eta$ the correspondence has a core, then
over $s$ the correspondence has a core.
\end{lem}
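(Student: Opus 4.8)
The plan is to reduce to a statement about Galois closures of function field extensions and then use the finiteness of the tower of Galois closures that characterizes the existence of a core. First I would pass to function fields: write $M_\eta = k(\eta)(Z_\eta)$, $L_\eta = k(\eta)(X_\eta)$, $K_\eta = k(\eta)(Y_\eta)$, and similarly over $s$. By Proposition \ref{Proposition:having_a_core_invariant_under_extension} the property of having a core is insensitive to algebraic extension of the base, so I may replace $R$ by its integral closure in an algebraic closure of $\mathrm{Frac}(R)$ and assume the fraction field $K$ and the residue field $\kappa$ are both algebraically closed; this removes the usual ``extension of ground field'' bookkeeping when taking Galois closures.

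Now, since the correspondence over $\eta$ has a core, Lemma \ref{Lemma:No_Galois} (applied over the algebraically closed field $\eta$, where $Z_\eta$ is hyperbolic because $g \geq 2$) produces a curve $W_\eta \to Z_\eta$ with $W_\eta \to X_\eta$ and $W_\eta \to Y_\eta$ both finite Galois. Equivalently, the iterated Galois-closure tower of Diagram \ref{Tower} over $\eta$ \emph{terminates}: there is a finite stage $W_{YX\cdots}$ which is Galois over both $X_\eta$ and $Y_\eta$, i.e. $E_\infty/M_\eta$ is a finite extension. The key step is then to spread this finite tower out over $S$. Because $Z$ is smooth projective over $S$ with geometrically irreducible fibers, and the relevant covers are generically covers of $Z_\eta$, after possibly shrinking (but here $S$ is just a trait, so no shrinking is lost) I get smooth projective curves over $S$ realizing each stage of the tower, with $(W_{YX\cdots})_\eta$ the stage over $\eta$; the field of rational functions of the top stage is a finite extension of $M = R(Z)$ that is Galois over both $L$ and $K$ after inverting nothing (or at worst after a further finite base change already absorbed above). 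Specializing this finite cover to the closed point $s$: a finite cover of $Z$ that is Galois over $X$ and over $Y$ restricts to a cover $W_s \to Z_s$; its automorphism group over $X_s$ (resp. $Y_s$) has order at least $\deg(W_\eta/X_\eta)$ by semicontinuity of the number of automorphisms (or, concretely, automorphisms of the cover specialize), and since $\deg(W_s/X_s) = \deg(W_\eta/X_\eta)$, the specialized cover is still finite Galois over $X_s$ and over $Y_s$. Hence by Lemma \ref{Lemma:No_Galois} again (now over $\kappa = \bar\kappa$, where I should check $Z_s$ is hyperbolic — it is, since the genus is constant $\geq 2$), the correspondence over $s$ has a core.

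The main obstacle is the spreading-out step together with controlling the behavior of the Galois closure under specialization: a priori the generic Galois closure could \emph{drop} degree, or acquire extra automorphisms, at the special fiber, which would break the equality of degrees I need to invoke Lemma \ref{Lemma:No_Galois} over $s$. The clean way around this is to work with the compactified/projective models throughout and observe that over a trait with algebraically closed residue and fraction field the normalization $W$ of $Z$ in the generic Galois closure is a smooth projective curve over $S$ with geometrically irreducible fibers, so $\deg(W_s/Z_s) = \deg(W_\eta/Z_\eta)$ automatically; the automorphisms of $W/X$ are finite in number (bounded by $\deg$), extend over $S$ by properness and normality (a birational self-map of a normal proper $S$-scheme finite over $S$ is a morphism), and specialize injectively, so no degree is lost. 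Once that rigidity is in place, the argument is essentially formal. One should also remark — as the paper does in the preceding Remark — that Example \ref{Example:non_hyperbolic_specialization} shows the hyperbolicity hypothesis on the fibers cannot be dropped, since there the generic fiber has no core but every special fiber does, the reverse direction; here we are going the other way and hyperbolicity of $Z_s$ (guaranteed by constancy of the genus) is exactly what lets us re-apply Lemma \ref{Lemma:No_Galois} downstairs.
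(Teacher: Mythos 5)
Your route through Galois closures needs two hypotheses that the lemma does not grant, and at the point where you acknowledge the ``main obstacle'' the resolution you propose is an assertion that is false in general. First, to conclude at the special fiber you apply Lemma \ref{Lemma:No_Galois} in the direction ``a cover $W_s$ Galois over both $X_s$ and $Y_s$ exists, hence a core exists.'' That direction genuinely requires $Z_s$ hyperbolic (or $\kappa$ algebraic over a finite field): it is the finiteness of the group generated by $\text{Aut}(W_s/X_s)$ and $\text{Aut}(W_s/Y_s)$ that produces the core, and Example \ref{Example:non_hyperbolic_specialization} shows this implication fails otherwise. You justify hyperbolicity by ``the genus is constant $\geq 2$,'' but the lemma makes no genus assumption; you have imported a hypothesis from Lemma \ref{Lem:hyperbolic_specialization}. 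Second, and more seriously, the lemma also makes no \'etaleness assumption, and your rigidity claim --- that the normalization $W$ of $Z$ in the generic Galois closure is smooth over $S$ with geometrically irreducible fibers, so that $W_s\rightarrow X_s$ is again a finite Galois morphism of curves of the same degree --- is not true without \'etaleness. Already for a degree-$2$ cover the normal model can acquire a non-reduced special fiber (the normalization of $\mathbb{A}^1_R$ in $K(x)(\sqrt{\pi x})$ is $R[x,y]/(y^2-\pi x)$, whose special fiber is cut out by $y^2=0$), branch points can collide, and in residue characteristic $p$ a separable generic cover can specialize to an inseparable one, so $W_s\rightarrow X_s$ need not even be a morphism of curves in the sense of this paper. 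This is exactly why the proof of Lemma \ref{Lem:hyperbolic_specialization} (the converse implication) leans so heavily on the \'etale hypothesis, as the remark following it stresses. Likewise, injectivity of the specialization map on automorphisms is a rigidity statement for genus $\geq 2$, not a formality.

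The paper's actual proof sidesteps all of this and is purely function-theoretic: since the generic fiber has a core, pick a non-constant $f\in K(X)\cap K(Y)\subset K(Z)$; after multiplying by a power of the uniformizer $\pi$, arrange that $f$ has a well-defined nonzero reduction $\overline{f}\in\kappa(X_s)\cap\kappa(Y_s)$; if $\overline{f}$ equals a constant $c$, replace $f$ by $(f-c)/\pi$ and repeat. The process terminates in a non-constant element of $\kappa(X_s)\cap\kappa(Y_s)$, which is a core over $s$. You should either adopt that argument or add the hypotheses (\'etale maps, hyperbolic fibers) that your route actually requires --- but then you would be proving a strictly weaker statement than the one asserted.
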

\begin{proof}
Let $\pi$ be a uniformizer of $R$. Denote by $\kappa$ residue field
of $R$ and by $K$ the fraction field of $R$. Pick a non-constant
rational function $f$ in the intersection $K(X)\cap K(Y)$ (the intersection
takes place in $K(Z)$.) By multiplying by an appropriate power of
$\pi$, we can guarantee that $f$ extends to rational functions on
the special fiber and in fact that $f$ has nonzero reduction in $0\neq\overline{f}\in\kappa(X_{s})\cap\kappa(Y_{s})$.
Suppose $f$ is constant modulo $\pi$, or equivalently that $f\equiv c(\text{mod}\pi)$
for some $c\in R$. Then $\frac{f-c}{\pi}$ may again be reduced modulo
$\pi$. If $\frac{f-c}{\pi}$ is non-constant on the special fiber,
we are done, so suppose not and repeat the procedure. This procedure
terminates because our original choice of $f\in K(X)$ was non-constant
and the result will be a non-constant function in $\kappa(X_{s})\cap\kappa(Y_{s})$.
\end{proof}
\begin{cor}
\label{Corollary:Existence_of_a_lift_implies_shimura_curves}Let $X\leftarrow Z\rightarrow Y$
be an étale correspondence of projective hyperbolic curves without
a core over $\mathbb{F}$. If this correspondence lifts to a correspondence
of curves $\tilde{X}\leftarrow\tilde{Z}\rightarrow\tilde{Y}$ over
$W(\mathbb{F})$, then $X$, $Y$, and $Z$ are the reductions modulo
$p$ of Shimura curves.
\end{cor}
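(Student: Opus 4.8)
The plan is to pass to the generic fibre of the lift, show it is an étale correspondence of hyperbolic curves without a core over a field of characteristic $0$, invoke Mochizuki's Theorem \ref{Theorem:Mochizuki}, and then read off the conclusion on the special fibre. Write $S=\mathrm{Spec}(W(\mathbb{F}))$, with closed point $s$ (residue field $\mathbb{F}$) and generic point $\eta=\mathrm{Spec}(K)$ where $K=\mathrm{Frac}(W(\mathbb{F}))$ has characteristic $0$. By hypothesis we have smooth proper curves $\tilde X,\tilde Z,\tilde Y$ over $S$ with $\tilde X_s=X$, $\tilde Z_s=Z$, $\tilde Y_s=Y$ and finite maps $\tilde f,\tilde g$ restricting to $f,g$.

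First I would check that the generic fibre $\tilde X_\eta\leftarrow \tilde Z_\eta\rightarrow \tilde Y_\eta$ is again an étale correspondence of hyperbolic curves. Étaleness propagates from $s$ to all of $S$: the branch locus of $\tilde f$ is closed in $\tilde X$, and since $\tilde f$ is finite its image in $S$ is closed; as $\tilde f_s=f$ is étale this image misses $s$, hence is empty, so $\tilde f$ --- and likewise $\tilde g$ --- is finite étale over $S$. (If one takes ``lifts to a correspondence of curves'' to already include étaleness of the lift, this step is vacuous.) Hyperbolicity is preserved because the genus is locally constant in a smooth proper family, so $\tilde X_\eta,\tilde Z_\eta,\tilde Y_\eta$ have the same genus $\ge 2$ as $X,Z,Y$ and are projective, hence hyperbolic.

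Next I would rule out a core on the generic fibre. This is exactly the contrapositive of Lemma \ref{Lemma:generic_has_core_then_special_has_core} applied to the discrete valuation ring $W(\mathbb{F})$: that lemma shows a core over $\eta$ forces a core over $s$, so the absence of a core over $s$ --- our hypothesis --- gives the absence of a core over $\eta=\mathrm{Spec}(K)$. Note the direction: here one uses the elementary ``spread out a non-constant function in $K(\tilde X_\eta)\cap K(\tilde Y_\eta)$ and reduce it modulo a uniformizer'' argument, not the étale specialization Lemma \ref{Lem:hyperbolic_specialization}, which runs the other way.

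At this point $\tilde X_\eta\leftarrow \tilde Z_\eta\rightarrow \tilde Y_\eta$ is an étale correspondence of hyperbolic curves without a core over the characteristic-$0$ field $K$, so Theorem \ref{Theorem:Mochizuki} applies and shows $\tilde X_\eta,\tilde Z_\eta,\tilde Y_\eta$ are all Shimura curves (and, by Remark \ref{Remark:no_defs}, even defined over $\overline{\mathbb{Q}}$). Then $\tilde X\rightarrow S$ is a smooth proper $W(\mathbb{F})$-model of a Shimura curve whose special fibre is $X$, which is precisely the statement that $X$ is a reduction modulo $p$ of a Shimura curve; the same applies to $Y$ and $Z$. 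One can add that since these curves have genus $\ge 2$ a smooth proper model over a DVR is unique (it coincides with the stable model), so $X,Y,Z$ are the canonical reductions. I do not expect a serious obstacle: the mathematical weight is carried by Mochizuki's theorem and by Lemma \ref{Lemma:generic_has_core_then_special_has_core}, both available; the only points demanding care are the propagation of étaleness to the generic fibre and keeping track of which specialization lemma runs in which direction.
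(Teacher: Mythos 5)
Your proposal is correct and follows essentially the same route as the paper: propagate \'etaleness of the lift, apply Lemma \ref{Lemma:generic_has_core_then_special_has_core} in the contrapositive to rule out a core on the generic fibre, and invoke Mochizuki's Theorem \ref{Theorem:Mochizuki}. The only cosmetic difference is that the paper dispatches the \'etaleness of the lift by citing purity, whereas you give a direct argument via closedness of the non-\'etale locus and properness over $\mathrm{Spec}(W(\mathbb{F}))$; both are fine.
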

\begin{proof}
The lifted correspondence is automatically étale by purity. Lemma
\ref{Lemma:generic_has_core_then_special_has_core} then implies that
the general fiber does not have a core. Mochizuki's Theorem \ref{Theorem:Mochizuki}
then implies that $\tilde{X}$, $\tilde{Y}$, and $\tilde{Z}$ are
all Shimura curves as desired.
\end{proof}

\section{\label{Section:Generic_graph}The Generic Graph of a Correspondence}

Let $X\leftarrow Z\rightarrow Y$ be a correspondence of curves over
$k$. and let $\Omega$ be an algebraically closed field of transcendence
degree 1 over $k$, thought of as a $k$-algebra. We construct an
infinite 2-colored graph $\mathcal{G}_{gen}^{full}$, which we call
the \emph{full generic graph} of the correspondence. The blue vertices
of $\mathcal{G}_{gen}^{full}$ are the $\Omega$-valued points of
$X$; more precisely, a blue vertex is given by a $k$-algebra homomorphism
$k(X)\rightarrow\Omega$. Similarly, the red vertices are the $\Omega$-valued
points of $Y$ and the edges are the $\Omega$-valued points of $Z$.
A blue vertex $p:k(X)\hookrightarrow\Omega$ and red vertex $q:k(Y)\hookrightarrow\Omega$
are joined by an edge if there exists an embedding $k(Z)\hookrightarrow\Omega$
that restricts to $p$ and to $q$ on the subfields $k(X)$ and $k(Y)$
respectively. Note that $\text{Aut}_{k}(\Omega)$ naturally acts on
the graph $\mathcal{G}_{gen}^{full}$ by post-composition.
\begin{rem}
The original correspondence is minimal iff there are no multiple edges
in $\mathcal{G}_{gen}^{full}$. (Recall that the morphisms of curves
were generically separable by definition.)
\end{rem}
\begin{condition}
For the rest of the sections involving graph theory, we suppose that
the correspondence $X\leftarrow Z\rightarrow Y$ is minimal in order
that we get a graph and not a multigraph.
\end{condition}
\begin{defn}
\label{Definition:subfield_subgraph}Given any subgraph $H\subset\mathcal{G}_{gen}^{full}$,
we define the subfield $E_{H}\subset\Omega$ by taking the compositum
of the subfields $e(k(Z))\subset\Omega$, $p(k(X))\subset\Omega$,
and $q(k(Y))\subset\Omega$ corresponding to all of the edges and
vertices $e$, $p$, and $q$ in $H$.
\end{defn}
There is no reason to believe that $\mathcal{G}_{gen}^{full}$ is
connected. We give $\mathcal{G}_{gen}^{full}$ a distinguished blue
vertex $P$, red vertex $Q$, and edge $PQ$ between them by picking
the $k$-embedding 
\[
PQ:k(Z)\hookrightarrow\Omega
\]
and we set the graph $\mathcal{G}_{gen}$ (the \emph{generic graph})
to be the connected component of $\mathcal{G}_{gen}^{full}$ containing
this distinguished edge. All connected components of $\mathcal{G}_{gen}^{full}$
arise in this way and all connected components of $\mathcal{G}_{gen}^{full}$
are isomorphic. We denote by $P(k(X))$ the image of the distinguished
blue point $P$ as a subfield of $\Omega$ and similarly for $Q(k(Y))$.
\begin{lem}
\label{Lemma:Galois_closure_closed_ball}Let $H\subset\mathcal{G}_{gen}$
be the full subgraph consisting of all vertices of distance at most
$n$ from a fixed vertex $v$; that is, $H$ is the closed ball $H=B(v,n)$.
Then $E_{H}$ is Galois over $E_{v}$.
\end{lem}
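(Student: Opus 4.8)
The plan is to prove this by induction on $n$, the radius of the ball. The base case $n=0$ is trivial: $H = \{v\}$ is a single vertex, $E_H = E_v$, and a field is trivially Galois over itself. For the inductive step, I would assume $E_{B(v,n)}$ is Galois over $E_v$ and try to show $E_{B(v,n+1)}$ is Galois over $E_v$ as well. Rather than work directly with $E_v$ as the base, it is cleaner to first show $E_{B(v,n+1)}$ is Galois over $E_{B(v,n)}$ and then glue: a tower $E_v \subset E_{B(v,n)} \subset E_{B(v,n+1)}$ with both steps Galois does not immediately give the composite Galois, so instead I would argue that $E_{B(v,n+1)}$ is \emph{generated over $E_v$ by finitely many conjugates} of intermediate fields and hence is the compositum of Galois extensions of $E_v$, which is Galois over $E_v$. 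Concretely: $B(v,n+1)$ is obtained from $B(v,n)$ by adjoining, for each boundary vertex $w$ at distance exactly $n$ from $v$, all the edges incident to $w$ and their other endpoints. Each such local enlargement corresponds, on the level of fields, to taking (a piece of) a Galois closure of one of the structure maps $f$ or $g$ — because the fiber of $Z$ over a point of $X$ (or $Y$), together with the associated points of $Y$ (or $X$), is exactly the set of embeddings extending a given embedding, and these are permuted transitively up to $\mathrm{Gal}$.

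The key mechanism I would isolate is this: if $w$ is a blue vertex (an embedding $k(X) \hookrightarrow \Omega$) lying in $B(v,n)$, then the edges incident to $w$ correspond to the embeddings $k(Z) \hookrightarrow \Omega$ extending $w|_{k(X)}$, i.e. to the $\deg(f)$-many factors of $k(Z)\otimes_{k(X)} \Omega$; the red endpoints of those edges are the restrictions of these embeddings to $k(Y)$. The compositum of all these images inside $\Omega$ is precisely the image of the Galois closure of $k(Z)/k(X)$ under the embedding $w$ extended compatibly — and crucially, this compositum is Galois over $w(k(X))$, hence its compositum with $E_{B(v,n)}$ (which contains $w(k(X))$) is Galois over $E_{B(v,n)}$. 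Running over all finitely many boundary vertices $w$ of $B(v,n)$ and taking the compositum of these Galois extensions, we get that $E_{B(v,n+1)}/E_{B(v,n)}$ is Galois. To promote this to Galois over $E_v$: every element of $E_{B(v,n+1)}$ is algebraic over $E_v$, and I claim its minimal polynomial over $E_v$ splits in $E_{B(v,n+1)}$. This follows because $\mathrm{Aut}_k(\Omega)$ acts on the graph $\mathcal{G}_{gen}$, so any $E_v$-automorphism of $\Omega$ (there are enough, since $\Omega$ is an algebraic closure of $E_v$-relevant fields) maps $B(v,n+1)$ to another ball of the same radius around $v$ — but an automorphism fixing $E_v$ fixes $v$ and permutes its neighbors, hence preserves $B(v,n+1)$ setwise as a subgraph, hence preserves $E_{B(v,n+1)}$ as a subfield of $\Omega$. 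Therefore $E_{B(v,n+1)}$ is stable under $\mathrm{Aut}(\Omega/E_v)$ and is algebraic and (one checks) separable over $E_v$, so it is Galois.

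The separability point deserves care but is not hard: each structure map $f$, $g$ is generically separable by the standing conventions, so $k(Z)/k(X)$ and $k(Z)/k(Y)$ are separable, their Galois closures are separable, and compositums of separable extensions are separable; thus $E_H/E_v$ is separable for every finite subgraph $H$. The genuinely delicate step — the one I expect to be the main obstacle — is the claim that $\mathrm{Aut}(\Omega/E_v)$-stability of $E_{B(v,n+1)}$ is equivalent to normality, together with correctly matching "ball around $v$ in the graph" with "all embeddings extending a bounded amount of data". The subtlety is that an automorphism $\sigma \in \mathrm{Aut}(\Omega/E_v)$ fixes the \emph{field} $E_v$ but need not fix $v$ as a \emph{vertex} unless $v$'s field $E_v$ determines the embedding $v$ up to the $\mathrm{Aut}(\Omega/E_v)$-action in a controlled way; one must use that the distinguished data defining the ball is invariant, and that distance in $\mathcal{G}_{gen}$ is $\mathrm{Aut}(\Omega/E_v)$-equivariant. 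Making this bookkeeping precise — perhaps by phrasing the induction as "$E_{B(v,n)}$ is the fixed field of the stabilizer of $B(v,n)$ inside a suitable profinite group, and that stabilizer is normal in the stabilizer of $v$" — is where the real work lies; the field-theoretic compositum arguments around it are routine.
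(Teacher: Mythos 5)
Your proposal is correct and its engine is the same as the paper's: the paper fixes $v$ blue, writes $E_{H}$ as the compositum of the fields $E_{P}$ over all length-$n$ paths $P$ from $v$, and notes that any $k(X)$-embedding $E_{H}\hookrightarrow\Omega$ extends to an element of $\text{Aut}(\Omega/E_{v})$, which permutes such paths and therefore preserves the compositum --- exactly your ``stability of $B(v,n)$ under $\text{Aut}(\Omega/E_{v})$'' step, so the induction and the intermediate claim that $E_{B(v,n+1)}/E_{B(v,n)}$ is Galois are unnecessary wrappers. The difficulty you flag at the end is not actually there: since the action is by post-composition, any $\sigma\in\text{Aut}(\Omega/E_{v})$ satisfies $\sigma\circ v=v$ because $\sigma$ fixes $v(k(X))$ pointwise, so $\sigma$ fixes the vertex $v$ and, being a graph automorphism, preserves distances from $v$ and hence the ball.
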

\begin{proof}
First of all, $E_{v}$ is the field corresponding to $v$ as in Definition
\ref{Definition:subfield_subgraph}. We may suppose WLOG that $v$
is a blue vertex, so $E_{v}=v(k(X))$ as $v$ is by definition a $k$-embedding
$k(X)$ to $\Omega$. In other words, $v$ gives $\Omega$ the structure
of a $k(X)$-algebra. Now, $E_{H}$ is the compositum of all of the
fields associated to all of the edges and vertices in $H$ in $\Omega$.
In particular, if $\mathcal{P}=\{P\}$ is the collection of all paths
of length $n$ starting at $v$, then $E_{H}$ is the compositum of
$(E_{P})_{P\in\mathcal{P}}$ inside of $\Omega$. Here each $E_{P}$
and $E_{H}$ has a $k(X)$-algebra structure via $v$ and our goal
is to prove that $E_{H}$ is Galois over $k(X)$ with respect to this
algebra structure $v:k(X)\hookrightarrow E_{H}$.

Consider $E_{H}$ together with the subfields $E_{P}$, $P\in\mathcal{P}$,
as abstract $k(X)$-algebras. Let $\phi_{0}$ be the original embedding
$E_{H}\hookrightarrow\Omega$. To prove $E_{H}$ is Galois over $k(X)$,
we must show that for every 
\[
\phi\in\text{Hom}_{k(X)}(E_{H},\Omega)
\]
the image of $\phi$ is contained in $\phi_{0}(E_{H})$. Note that
$\phi$ is determined by where all of the $E_{P}$ are sent. Any $\phi$
can be obtained from $\phi_{0}$ via an element of $\text{Aut}(\Omega/k(X))$,
as $\Omega$ is algebraically closed, and so a path $P$ of length
$n$ originating at $v$ is sent to another such path $P'$. In other
words, $\phi(E_{P})=\phi_{0}(E_{P'})$ for another path $P'$ of length
$n$ originating at $v$. As $E_{H}$ was the compositum of all such
$E_{P}$, it follows that the extension $E_{H}/k(X)$ is Galois as
desired. 
\end{proof}
The graph $\mathcal{G}_{gen}$ is a full subgraph of $\mathcal{G}_{gen}^{full}$
so, as in Definition \ref{Definition:subfield_subgraph}, we can take
the associated field $E_{\mathcal{G}_{gen}}\subset\Omega$ given by
the compositum of the subfields of $\Omega$ associated to the edges.
Let $E\subset\Omega$ be the minimal field extension of $k(Z)$ (with
respect to the embedding $PQ:k(Z)\hookrightarrow\Omega$) that is
Galois over both $k(X)$ and $k(Y)$. We prove that $E=E_{\mathcal{G}_{gen}}$
with the next series of results.
\begin{cor}
\label{Corollary:E_infty_in_E_gen}The subfield $E_{\mathcal{G}_{gen}}\subset\Omega$
is Galois over both $P(k(X))$ and $Q(k(Y))$. Therefore $E\subset E_{\mathcal{G}_{gen}}$
\end{cor}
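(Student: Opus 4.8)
The plan is to realize $E_{\mathcal{G}_{gen}}$ as an increasing union of the finite Galois extensions furnished by Lemma \ref{Lemma:Galois_closure_closed_ball}, and then to invoke the fact that a directed union of Galois extensions is Galois.

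First I would record that $\mathcal{G}_{gen}$ is connected by construction and is locally finite: the valence of a blue vertex is at most $\deg f$ and that of a red vertex at most $\deg g$, since the edges through a vertex $v$ correspond to the finitely many extensions of the embedding $v$ to $k(Z)$. Hence for each $n$ the closed ball $H_n := B(P,n)$ is a finite subgraph, and $\mathcal{G}_{gen} = \bigcup_n H_n$ because every vertex and edge of the connected graph $\mathcal{G}_{gen}$ lies at finite distance from $P$. Comparing compositums, this gives $E_{\mathcal{G}_{gen}} = \bigcup_n E_{H_n}$ as subfields of $\Omega$, a union of the increasing chain $E_{H_0} \subset E_{H_1} \subset \cdots$. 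By Lemma \ref{Lemma:Galois_closure_closed_ball}, each $E_{H_n}$ is Galois over $E_P = P(k(X))$; being a compositum of finitely many finite extensions of $P(k(X))$ — separable, since all our morphisms of curves are generically separable — it is a finite Galois extension. A directed union of finite Galois extensions inside a fixed algebraic closure is again Galois (algebraic, normal, and separable), so $E_{\mathcal{G}_{gen}}/P(k(X))$ is Galois. Running the identical argument with the red vertex $Q$ in place of $P$, using $\mathcal{G}_{gen} = \bigcup_n B(Q,n)$, shows that $E_{\mathcal{G}_{gen}}/Q(k(Y))$ is Galois as well.

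For the final assertion, recall that the distinguished embeddings $P : k(X) \hookrightarrow \Omega$ and $Q : k(Y) \hookrightarrow \Omega$ were obtained by restricting $PQ : k(Z) \hookrightarrow \Omega$ to the subfields $k(X)$ and $k(Y)$, and that the edge $PQ$ lies in $\mathcal{G}_{gen}$, so $PQ(k(Z)) \subset E_{\mathcal{G}_{gen}}$. Thus $E_{\mathcal{G}_{gen}}$ is a subfield of $\Omega$ containing $k(Z)$ via $PQ$ that is Galois over both $k(X)$ and $k(Y)$. Since $E$ was defined to be the minimal such subfield, $E \subset E_{\mathcal{G}_{gen}}$.

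I do not anticipate a serious obstacle: the only points requiring care are the verification that the balls $H_n$ are finite (so that each $E_{H_n}$ is genuinely a finite extension, and Lemma \ref{Lemma:Galois_closure_closed_ball} applies with a finite Galois conclusion) and the standard but worth-stating fact that an increasing union of Galois extensions is Galois, together with the separability bookkeeping in characteristic $p$, which is handled by the standing convention that morphisms of curves are generically separable.
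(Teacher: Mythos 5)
Your proof is correct and follows the same route as the paper: write $\mathcal{G}_{gen}$ as the increasing union of the closed balls $B(P,n)$ (resp.\ $B(Q,n)$), apply Lemma \ref{Lemma:Galois_closure_closed_ball} to each ball, pass to the union, and then conclude $E\subset E_{\mathcal{G}_{gen}}$ from the minimality in the definition of $E$. The extra care you take with local finiteness of the graph, the union-of-Galois-extensions step, and separability is sound and only makes explicit what the paper leaves implicit.
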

\begin{proof}
The connected graph $\mathcal{G}_{gen}$ is the union of the subgraphs
$\cup_{n}B(P,n)$ of closed balls of radius $n$ around $P$, so by
Lemma \ref{Lemma:Galois_closure_closed_ball} the field $E_{\mathcal{G}_{gen}}$
is Galois over $P(k(X))$. Similarly, $E_{\mathcal{G}_{gen}}$ is
Galois over $Q(k(Y))$. Therefore $E\subset E_{\mathcal{G}_{gen}}$
as desired.
\end{proof}
\begin{lem}
\label{Lemma:E_G_in_biGalois_extension}Let $X\leftarrow Z\rightarrow Y$
be a correspondence of curves over $k$ and embed the function fields
into $\Omega$ via $PQ:k(Z)\hookrightarrow\Omega$. If there is a
subfield $F\subset\Omega$ that is Galois over both $k(X)$ and $k(Y)$,
then $E_{\mathcal{G}_{gen}}\subset F$. 
\end{lem}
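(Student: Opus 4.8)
The plan is to show that the field $F$, being Galois over both $k(X)$ and $k(Y)$, "contains" every edge and vertex field appearing in the connected component $\mathcal{G}_{gen}$. First I would reduce to an edge-by-edge statement: by Definition \ref{Definition:subfield_subgraph}, $E_{\mathcal{G}_{gen}}$ is the compositum inside $\Omega$ of all the subfields $e(k(Z))$ over edges $e$ of $\mathcal{G}_{gen}$ (these already contain the vertex subfields of their endpoints), so it suffices to prove that $e(k(Z)) \subset F$ for every edge $e$ in the connected component containing the distinguished edge $PQ$. Since any edge of $\mathcal{G}_{gen}$ is connected to $PQ$ by a finite path, and since $PQ(k(Z)) = k(Z) \subset F$ by hypothesis (as $F \supset k(X)$ is an extension of $k(Z)$... wait, more carefully: $F$ is Galois over $k(X)$ hence contains $k(X)$, but we need $k(Z) \subset F$; this holds because $F$ Galois over $k(X)$ means $F$ is a subfield of $\Omega$ containing $k(X)$, and actually the hypothesis that $F$ is Galois over $k(X)$ should be read as $F \supset k(X)$, but we genuinely need $k(Z) \subset F$ — I would get this from the fact that $F$ must contain the Galois closure of $k(Z)/k(X)$ inside $\Omega$, which contains $k(Z)$, OR simply add the harmless hypothesis $k(Z) \subset F$ which holds in the intended application where $F \supset E$), the result follows by induction on path length once I establish the inductive step.

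The inductive step is the heart of the argument, and I expect it to be the main obstacle. Suppose $e$ and $e'$ are two edges of $\mathcal{G}_{gen}^{full}$ sharing a vertex $v$, and suppose inductively that $e(k(Z)) \subset F$, hence also $v(k(X)) \subset F$ (say $v$ is blue). I must show $e'(k(Z)) \subset F$. The two edges $e, e'$ through the common blue vertex $v$ correspond to two $k$-embeddings $k(Z) \hookrightarrow \Omega$ that agree with $v$ on the subfield $k(X)$; that is, viewing $\Omega$ as a $k(X)$-algebra via $v$, both $e$ and $e'$ are $k(X)$-algebra embeddings of $k(Z)$ into $\Omega$. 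Since $\Omega$ is algebraically closed over the finite extension $k(Z)/k(X)$, there exists $\sigma \in \mathrm{Aut}(\Omega/k(X))$ with $\sigma \circ e = e'$. Now $F$ is Galois over $k(X)$, so $F$ is stable under $\mathrm{Aut}(\Omega/k(X))$; since $e(k(Z)) \subset F$ we conclude $e'(k(Z)) = \sigma(e(k(Z))) \subset \sigma(F) = F$. The same argument with the colors swapped handles a common red vertex, using that $F$ is Galois over $k(Y)$.

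Finally I would assemble these pieces: starting from the distinguished edge $PQ$ with $PQ(k(Z)) = k(Z) \subset F$, any edge $e$ of $\mathcal{G}_{gen}$ is reached from $PQ$ by a finite alternating path of edges, each consecutive pair sharing a vertex; applying the inductive step (blue or red according to the shared vertex's color) along the path shows $e(k(Z)) \subset F$. Taking the compositum over all edges of $\mathcal{G}_{gen}$ gives $E_{\mathcal{G}_{gen}} \subset F$, as claimed. The only subtlety worth flagging in the write-up is the bookkeeping that "Galois over $k(X)$" for a possibly infinite algebraic extension $F/k(X)$ inside $\Omega$ is equivalent to $F$ being stable under $\mathrm{Aut}(\Omega/k(X))$ — this is standard (a compositum of finite Galois subextensions is normal and separable, and normality over $k(X)$ inside an algebraic closure is exactly $\mathrm{Aut}(\Omega/k(X))$-stability), and it is the same principle already used in the proof of Lemma \ref{Lemma:Galois_closure_closed_ball}.
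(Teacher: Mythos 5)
Your overall strategy is the same as the paper's: start from the distinguished edge $PQ$ and propagate the containment $e(k(Z))\subset F$ along paths in the connected component, using that two edges through a common vertex $v$ are related by an automorphism of $\Omega$ fixing $E_{v}$ pointwise. Your observation about the base case (that the hypothesis should be read as including $PQ(k(Z))\subset F$) is fair; the paper's proof also assumes this implicitly by giving $F$ the structure of a $k(Z)$-algebra.

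However, there is a genuine gap in your inductive step. You take $\sigma\in\mathrm{Aut}(\Omega/v(k(X)))$ with $\sigma\circ e=e'$ and then conclude $\sigma(F)=F$ from ``$F$ is Galois over $k(X)$.'' But the hypothesis only says that $F$ is Galois over the \emph{distinguished} subfields $P(k(X))$ and $Q(k(Y))$ of $\Omega$; for a vertex $v\neq P$ the field $v(k(X))$ is a different (conjugate) subfield of $\Omega$, and an element of $\mathrm{Aut}(\Omega/v(k(X)))$ need not fix $P(k(X))$ or $Q(k(Y))$ pointwise, so neither normality statement in the hypothesis gives $\sigma(F)=F$. Your argument as written is valid only for the first step of the propagation, where the shared vertex is $P$ or $Q$ itself; it breaks down already at the second step (e.g.\ two edges sharing a red vertex $q\neq Q$ adjacent to $P$). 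The missing ingredient is precisely the conjugation step in the paper's proof: one shows that $E_{q}=\alpha(Q(k(Y)))$ for some $\alpha\in\mathrm{Gal}(F/P(k(X)))$, hence $F=\alpha(F)$ is Galois over $E_{q}$, and only then does stability of $F$ under $\mathrm{Aut}(\Omega/E_{q})$ follow. The clean fix is to strengthen your induction hypothesis to ``$e(k(Z))\subset F$ \emph{and} $F$ is Galois over $E_{w}$ for both endpoints $w$ of $e$,'' and to check in the inductive step that this extra clause propagates: if $F$ is Galois over $E_{v}$ and over $E_{w}$ ($w$ the other endpoint of $e$), then $\sigma(F)=F$ and $F$ is Galois over $\sigma(E_{w})=E_{w'}$, the other endpoint of $e'$. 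With that amendment your proof coincides with the paper's.
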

\begin{proof}
We have the following diagram of fields
\[
\xymatrix{ & F\\
 & k(Z)\ar[u]\\
k(X)\ar[ur]^{f^{*}} &  & k(Y)\ar[ul]_{g^{*}}
}
\]
where $F$ is Galois over \emph{both }$k(X)$ and $k(Y)$. The field
$F$ is naturally equipped with the structure of a $k(Z)$ algebra.
Extend $PQ:k(Z)\hookrightarrow\Omega$ any which way to a $k(Z)$-algebra
embedding $\phi:F\rightarrow\Omega$. Then the image of any edge adjacent
to $P$ in $\mathcal{G}_{gen}$ lands inside of the image $\phi(F)$
because $F$ is Galois over $k(X)$. Similarly, the image of any edge
adjacent to $Q$ in $\mathcal{G}_{gen}$ lives inside the image of
$\phi(F)$.

Let $q\neq Q$ be a vertex adjacent to $P$. There exists an automorphism
$\alpha\in\text{Gal}(\phi(F)/P(k(X)))$ that sends $Q(k(Y))$ to $E_{q}$
because $F$ is Galois over $k(X)$. Conjugating by $\alpha$, we
deduce that $\phi(F)$ is Galois over $E_{q}$ and hence the image
of all edges emanating from $q$ lie in $\phi(F)$. By propagating,
we get that $E_{\mathcal{G}_{gen}}\subset F$ as desired.
\end{proof}
\begin{cor}
We have an equality of fields $E=E_{\mathcal{G}_{gen}}$, considered
as subfields of $\Omega$. Equivalently, $E_{\mathcal{G}_{gen}}$
is the minimal field extension of $PQ(k(Z))$ inside of $\Omega$
that is Galois over the fields $P(k(X))$ and $Q(k(Y))$. 
\end{cor}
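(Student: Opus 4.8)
The plan is to combine the two preceding results. By Corollary~\ref{Corollary:E_infty_in_E_gen}, the field $E_{\mathcal{G}_{gen}}$ is Galois over both $P(k(X))$ and $Q(k(Y))$; since $E$ was \emph{defined} to be the minimal subfield of $\Omega$ containing $PQ(k(Z))$ with this bi-Galois property, we get the inclusion $E \subset E_{\mathcal{G}_{gen}}$ for free. For the reverse inclusion, I would simply apply Lemma~\ref{Lemma:E_G_in_biGalois_extension} with the choice $F = E$: that lemma says precisely that \emph{any} subfield $F \subset \Omega$ that is Galois over both $k(X)$ and $k(Y)$ (with respect to the fixed embeddings) must contain $E_{\mathcal{G}_{gen}}$, so $E_{\mathcal{G}_{gen}} \subset E$. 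Putting the two inclusions together yields $E = E_{\mathcal{G}_{gen}}$.

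The ``equivalently'' clause is then just an unwinding of the definition of $E$: by construction $E$ is the smallest subfield of $\Omega$ containing $PQ(k(Z))$ and Galois over both $P(k(X))$ and $Q(k(Y))$, and we have now identified that field with $E_{\mathcal{G}_{gen}}$. One should perhaps remark that $E$ exists and is well-defined — it is the intersection of all subfields of $\Omega$ containing $PQ(k(Z))$ that are Galois over both $P(k(X))$ and $Q(k(Y))$, and an intersection of such fields is again Galois over each of $P(k(X))$ and $Q(k(Y))$ — but this is the content already implicit in the setup preceding Corollary~\ref{Corollary:E_infty_in_E_gen}, so it needs at most a sentence.

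I do not anticipate any genuine obstacle here: the corollary is a formal consequence of the two results just proved, and all the real work (the Galois-closure-of-closed-balls argument in Lemma~\ref{Lemma:Galois_closure_closed_ball} and the propagation argument in Lemma~\ref{Lemma:E_G_in_biGalois_extension}) has already been done. The only point requiring a modicum of care is making sure the base points match up — i.e. that the $k(Z)$-algebra structures on $E$ and on $E_{\mathcal{G}_{gen}}$ induced by the distinguished edge $PQ$ agree — but this is immediate since both are computed inside the single fixed algebraically closed field $\Omega$ using the single fixed embedding $PQ : k(Z) \hookrightarrow \Omega$. So the proof is essentially two lines: ``$\subset$'' by Corollary~\ref{Corollary:E_infty_in_E_gen}, ``$\supset$'' by Lemma~\ref{Lemma:E_G_in_biGalois_extension} applied to $F = E$.
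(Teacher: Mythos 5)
Your proposal is correct and is exactly the paper's proof: the author's entire argument is ``Combine Lemma \ref{Lemma:E_G_in_biGalois_extension} and Corollary \ref{Corollary:E_infty_in_E_gen},'' which is precisely your two inclusions (the forward one from Corollary \ref{Corollary:E_infty_in_E_gen}, the reverse one from Lemma \ref{Lemma:E_G_in_biGalois_extension} applied to $F=E$). Your extra remark on the existence and well-definedness of $E$ as an intersection of bi-Galois subfields is a harmless and reasonable addition that the paper leaves implicit.
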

\begin{proof}
Combine Lemma \ref{Lemma:E_G_in_biGalois_extension} and Corollary
\ref{Corollary:E_infty_in_E_gen}.
\end{proof}
\begin{cor}
\label{Corollary:two_defs_e_infinity}Let $X\leftarrow Z\rightarrow Y$
be a correspondence of curves over $k$ without a core with $Z$ hyperbolic
or with $k\cong\mathbb{F}$. Then $E_{\infty}\cong E_{\mathcal{G}_{gen}}$.
\end{cor}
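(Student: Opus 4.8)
The plan is to identify the two towers appearing in the excerpt: on the one hand, $E_\infty \subset \Omega$ is by definition (from Section \ref{Sec:A_Recursive_Tower}) the union $E_{KL\dots}$ obtained by the iterated-Galois-closure process, which by Corollary \ref{Corollary:Intrinsic_min_galois_both} is the minimal subfield of $\Omega$ containing $M = k(Z)$ (via $PQ$) that is Galois over both $L = k(X)$ and $K = k(Y)$. On the other hand, the immediately preceding corollary in Section \ref{Section:Generic_graph} identifies $E_{\mathcal{G}_{gen}}$ as exactly $E$, the minimal field extension of $PQ(k(Z))$ inside $\Omega$ that is Galois over $P(k(X))$ and $Q(k(Y))$. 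So the statement $E_\infty \cong E_{\mathcal{G}_{gen}}$ is essentially a matching of two descriptions of ``the minimal bi-Galois extension.''

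First I would note that the hypothesis ``$Z$ hyperbolic or $k \cong \mathbb{F}$'' together with ``no core'' is precisely what makes the iterated-Galois-closure tower $W_{\infty}$ well-defined and produces $E_\infty$ in the first place (via Lemma \ref{Lemma:No_Galois} or Corollary \ref{Corollary:Finite_No_Galois}); so under this hypothesis $E_\infty$ is a legitimate subfield of $\Omega$. Then I would simply observe that $P(k(X))$, $Q(k(Y))$, and $PQ(k(Z))$ are by construction the images of $L$, $K$, $M$ under the embeddings attached to the distinguished edge $PQ$ of $\mathcal{G}_{gen}$ — these are literally the same embeddings used to build $E_\infty$, since the paper fixed $PQ : k(Z) \hookrightarrow \Omega$ once and for all in the Conventions and reused it in both sections. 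Hence ``minimal extension of $M$ in $\Omega$ Galois over both $L$ and $K$'' and ``minimal extension of $PQ(k(Z))$ in $\Omega$ Galois over $P(k(X))$ and $Q(k(Y))$'' are verbatim the same universal property.

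The argument is then a two-line comparison: by Corollary \ref{Corollary:Intrinsic_min_galois_both}, $E_\infty = E_{KL\dots}$ is the minimal such extension; by the unnamed corollary just above (combining Lemma \ref{Lemma:E_G_in_biGalois_extension} and Corollary \ref{Corollary:E_infty_in_E_gen}), $E_{\mathcal{G}_{gen}} = E$ is also the minimal such extension; minimal objects satisfying the same property inside the same ambient field $\Omega$ coincide, so $E_\infty = E_{\mathcal{G}_{gen}}$ as subfields of $\Omega$, a fortiori isomorphic as $k$-algebras. I expect the only subtlety — the ``main obstacle,'' though it is mild — is making sure the bookkeeping of which embedding is used is airtight: one must check that the tower construction in Section \ref{Sec:A_Recursive_Tower} really does take its Galois closures inside the \emph{same} $\Omega$ with the \emph{same} base embedding $PQ$ as the generic graph construction, rather than in an abstract algebraic closure chosen separately. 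Since the paper's Conventions fix $\Omega$ and $PQ$ globally and the relevant paragraph of Section \ref{Sec:A_Recursive_Tower} explicitly says ``Let $E_K$ be the Galois closure of $M/K$ in $\Omega$,'' this identification is immediate, and the proof is essentially the single sentence: combine Corollary \ref{Corollary:Intrinsic_min_galois_both} with the preceding corollary.
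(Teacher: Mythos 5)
Your proposal is correct and is essentially identical to the paper's own proof, which likewise just invokes Corollary \ref{Corollary:Intrinsic_min_galois_both} to identify $E_{\infty}$ with the minimal subfield of $\Omega$ containing $PQ(k(Z))$ that is Galois over both $P(k(X))$ and $Q(k(Y))$, and then matches it against the preceding corollary's identification of $E_{\mathcal{G}_{gen}}$ with the same minimal extension. Your extra care about the fixed embedding $PQ$ being shared between the two constructions is a fair point but does not change the argument.
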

\begin{proof}
The field $E_{\infty}$ is also the minimal field extension of $PQ(k(Z))$
inside of $\Omega$ that is Galois over $P(k(X))$ and $Q(k(Y))$
by Corollary \ref{Corollary:Intrinsic_min_galois_both}.
\end{proof}
\begin{rem}
One is tempted to make a converse definition to Definition \ref{Definition:subfield_subgraph}:
given any subfield $E\subset\Omega$ (respectively $E\subset E_{\infty}$),
define $\mathcal{G}_{E}^{full}$ (respectively $\mathcal{G}_{E}$)
to be the subgraph of $\mathcal{G}_{gen}^{full}$ (respectively $\mathcal{G}_{gen}$)
whose points and edges are have image inside of $E$. This definition
is rather poorly behaved; for instance if one starts out with a finite
connected subgraph $H\subset\mathcal{G}_{gen}$, takes $E_{H}\subset E_{\infty}$,
and then looks at the associated graph $G_{E_{H}}$, there is no reason
to believe that this graph is connected.
\end{rem}
The graph $\mathcal{G}_{gen}$ informally reflects the ``generic
dynamics'' of the correspondence. We will see one way of making this
precise in Section \ref{Section:Specialization_of_Graphs}: via a
specialization map. Nevertheless, we have the following proposition,
which says that a core exists iff $\mathcal{G}_{gen}$ is finite (i.e.
the ``generic dynamics'' are bounded), in line with Remark \ref{Remark:Bounded_orbit}.
\begin{prop}
\label{Proposition:no_core_infinite_graph}Let $X\leftarrow Z\rightarrow Y$
be a correspondence of curves over $k$ where $Z$ is hyperbolic or
where $k\cong\mathbb{F}$. This correspondence has no core if and
only if $\mathcal{G}_{gen}$ is an infinite graph.
\end{prop}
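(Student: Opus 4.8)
The plan is to prove the two directions separately, using the field-theoretic description of $\mathcal{G}_{gen}$ already developed. First suppose the correspondence has a core. By Lemma \ref{Lemma:No_Galois} (when $Z$ is hyperbolic) or Corollary \ref{Corollary:Finite_No_Galois} (when $k\cong\mathbb{F}$), there is a curve $W$, possibly after a finite extension of $k$, with a map $W\to Z$ such that the composites $W\to X$ and $W\to Y$ are both finite Galois. Passing to function fields, this produces a \emph{finite} extension $F$ of $k(Z)$ inside a chosen algebraic closure that is Galois over both $k(X)$ and $k(Y)$; concretely one takes $F$ to be the compositum inside $\Omega$ of a Galois closure of $k(Z)/k(X)$ and a Galois closure of $k(Z)/k(Y)$, then iterates as in Diagram \ref{Tower} — the point is that when a core exists this iteration terminates, so $F/k(Z)$ is finite. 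By Lemma \ref{Lemma:E_G_in_biGalois_extension}, $E_{\mathcal{G}_{gen}}\subset F$, so $E_{\mathcal{G}_{gen}}$ is a finite extension of $k(Z)$. Now I would argue that $\mathcal{G}_{gen}$ is then finite: every vertex and edge of $\mathcal{G}_{gen}$ corresponds to a $k$-embedding into $\Omega$ of $k(X)$, $k(Y)$, or $k(Z)$ whose image lies in $E_{\mathcal{G}_{gen}}$ (this is immediate from the definition of $E_{\mathcal{G}_{gen}}$ as the compositum of all these images), and there are only finitely many $k$-embeddings of a given finitely generated field into the finite extension $E_{\mathcal{G}_{gen}}$ of $k(Z)$. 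Hence $\mathcal{G}_{gen}$ has finitely many vertices and edges.

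Conversely, suppose the correspondence has no core. Then the iterated Galois closure process of Diagram \ref{Tower} never terminates, producing the infinite tower $W_\infty$ with function field $E_\infty$ which is an \emph{infinite} algebraic extension of $k(Z)$; equivalently, by Corollary \ref{Corollary:two_defs_e_infinity}, $E_\infty\cong E_{\mathcal{G}_{gen}}$, so $E_{\mathcal{G}_{gen}}/k(Z)$ has infinite degree. I then need to deduce that $\mathcal{G}_{gen}$ is infinite. If $\mathcal{G}_{gen}$ were finite, then $E_{\mathcal{G}_{gen}}$, being the compositum of finitely many finite extensions of $k(Z)$ (each vertex/edge field is finite over $k(Z)$ since the maps $f,g$ are finite), would itself be finite over $k(Z)$ — a contradiction. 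So $\mathcal{G}_{gen}$ is infinite.

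The main obstacle, and the step worth spelling out carefully, is the equivalence ``$\mathcal{G}_{gen}$ finite $\iff E_{\mathcal{G}_{gen}}/k(Z)$ finite,'' together with making sure the ``no core $\Rightarrow$ $E_\infty$ infinite'' implication is clean. For the latter: the iteration in Diagram \ref{Tower} strictly increases the fields at each stage precisely because of the alternation — by Lemma \ref{Lemma:No_Galois} / Corollary \ref{Corollary:Finite_No_Galois}, the absence of a core means that at every stage the current field fails to be Galois over one of $k(X)$, $k(Y)$, forcing a proper further extension; hence $[E_\infty:k(Z)]=\infty$. For the former equivalence, the $\Leftarrow$ direction of finiteness is the subtle one: I would note that $\mathcal{G}_{gen}$ is connected, locally finite (each blue vertex has exactly $\deg g$ edges, up to the minimality convention, each red vertex $\deg f$ edges), and the map sending a vertex to its associated subfield of $\Omega$ is injective on vertices of each color; since all these subfields are intermediate fields of $E_{\mathcal{G}_{gen}}/k$ of bounded degree over $k(Z)$, finiteness of $E_{\mathcal{G}_{gen}}$ over $k(Z)$ bounds the number of such intermediate fields and hence the number of vertices. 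One should also double-check the hypothesis "$Z$ hyperbolic or $k\cong\mathbb{F}$" is used exactly where Lemma \ref{Lemma:No_Galois} resp. Corollary \ref{Corollary:Finite_No_Galois} is invoked, and nowhere else; Example \ref{Example:non_hyperbolic_specialization} shows it cannot simply be dropped.
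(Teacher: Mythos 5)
Your second direction (no core $\Rightarrow$ $\mathcal{G}_{gen}$ infinite) is fine and is essentially the contrapositive of the paper's first sentence: a finite $\mathcal{G}_{gen}$ would make $E_{\mathcal{G}_{gen}}$ a finite extension of $PQ(k(Z))$ that is Galois over both $P(k(X))$ and $Q(k(Y))$, contradicting Lemma \ref{Lemma:No_Galois} (resp. Corollary \ref{Corollary:Finite_No_Galois}). The problem is the other direction. Your argument reduces ``core exists $\Rightarrow$ $\mathcal{G}_{gen}$ finite'' to the claim that $E_{\mathcal{G}_{gen}}$ being finite over $PQ(k(Z))$ forces $\mathcal{G}_{gen}$ to be finite, and you justify this by asserting that a finitely generated field admits only finitely many $k$-embeddings into a fixed finitely generated extension of $k(Z)$ (or, in your refined version, that there are only finitely many candidate image subfields of bounded index). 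Both assertions are false: $\text{Hom}_{k}(k(t),k(t))$ is infinite (every nonconstant rational function gives an embedding), and $k(t)$ has infinitely many index-$2$ subfields since $PGL_{2}(k)$ has infinitely many involutions. Distinct vertices can also share the same image subfield, so the vertex-to-subfield map need not be injective. Worse, the implication itself is not true without further input: in Example \ref{Example:non_hyperbolic_specialization} every edge of $\mathcal{G}_{gen}$ has image exactly $PQ(k(Z))$ (each arrow is already Galois, so extensions of $P$ to $k(Z)$ are precompositions with Galois automorphisms), hence $E_{\mathcal{G}_{gen}}=PQ(k(Z))$ is a \emph{finite} (indeed trivial) extension of $PQ(k(Z))$, yet $\mathcal{G}_{gen}$ is infinite because $\langle H_{1},H_{2}\rangle$ is infinite. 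Your argument never invokes the hypothesis ``$Z$ hyperbolic or $k\cong\mathbb{F}$'' at this step, so it would prove finiteness in that example too.

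The paper closes this gap differently: from the coarse core $C$ one takes a Galois closure $W$ of $Z\rightarrow C$ and propagates through the graph to show that \emph{every} edge $e:k(Z)\hookrightarrow\Omega$ of $\mathcal{G}_{gen}$ restricts to one and the same embedding $R:k(C)\hookrightarrow\Omega$. The finiteness then comes from counting extensions of the \emph{fixed} embedding $R$ along the \emph{finite} extension $k(C)\subset k(Z)$, i.e.\ from the finiteness of $\text{Hom}_{k(C)}(k(Z),\Omega)$ for a finite extension --- a correct statement, unlike the finiteness of $\text{Hom}_{k}(k(Z),F)$. So the missing idea in your writeup is precisely this rigidification along the core: you need all edges to agree on $k(C)$, not merely to land in a common finite extension of $PQ(k(Z))$.
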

\begin{proof}
If $\mathcal{G}_{gen}$ is finite, then $E_{\mathcal{G}_{gen}}$ is
a finite Galois extension of both $k(X)$ and $k(Y)$, so the correspondence
has a core by Lemma \ref{Lemma:No_Galois} (resp. Corollary \ref{Corollary:Finite_No_Galois}.)

Conversely, if the correspondence had a core, then let $C$ be the
coarse core. Let $W$ be a Galois closure of $Z\rightarrow C$. We
have the following diagram of fields, where we again fix $PQ:k(Z)\hookrightarrow\Omega$
and any extension $\phi:k(W)\hookrightarrow\Omega$. 
\[
\xymatrix{ & k(W)\\
 & k(Z)\ar[u]\\
k(X)\ar[ur]^{f^{*}} &  & k(Y)\ar[ul]_{g^{*}}\\
 & k(C)\ar[ul]\ar[ur]
}
\]
Call $P$, $Q$, and $R$ the restriction of the algebra embedding
$PQ$ to $k(X)$, $k(Y)$, and $k(C)$ respectively. Let $v$ be blue
vertex in $\mathcal{G}_{gen}$ adjacent to $Q$, given by a $k$-algebra
embedding $v:k(X)\hookrightarrow\phi(k(W))\subset\Omega$ by Lemma
\ref{Lemma:E_G_in_biGalois_extension}. As $k(W)/k(Y)$ is Galois,
there exists an automorphism 
\[
\alpha\in\text{Gal}(\phi(k(W))/Q(k(Y)))\cong\text{Gal}(k(W)/k(Y))
\]
that sends $P$ to $v$. As $k(C)\subset k(Y)$, this implies that
$v|_{k(C)}=R$. By propagating, we see that for every vertex $v$
of $\mathcal{G}_{gen}$, $v|_{k(C)}=R$. Therefore, for every edge
$e\in\mathcal{G}_{gen}$, thought of as a $k$-algebra embedding $e:k(Z)\hookrightarrow\Omega$,
we have that $e|_{k(C)}=R$. On the other hand, $k(Z)$ is a finite
extension of $k(C)$, so there are only finitely many ways to extend
$R$ to a $k$-algebra homomorphism $k(Z)\hookrightarrow\Omega$.
Therefore the number of edges is finite, as desired. 
\end{proof}
We record the following easy proposition for later use in proving
the surjective of the specialization morphism in the case of an étale
correspondence without a core.
\begin{prop}
\label{Proposition:finite_distance}For any finite subgraph $H\in\mathcal{G}_{gen}$,
the field $E_{H}$ is contained in a finite extension $F$ of $PQ(k(Z))$.
\end{prop}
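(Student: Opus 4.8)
The plan is to prove this by induction on the number of edges (equivalently, the number of vertices) in the finite subgraph $H$, or more efficiently by an induction on the "radius." First I would observe that since $H$ is finite and $\mathcal{G}_{gen}$ is connected, $H$ is contained in some closed ball $B(P,n)$ for $n$ large; and since $E_{H} \subset E_{B(P,n)}$ by Definition \ref{Definition:subfield_subgraph}, it suffices to show that $E_{B(P,n)}$ is a finite extension of $PQ(k(Z))$. So the whole problem reduces to: \emph{the field attached to a closed ball is a finite extension of $PQ(k(Z))$}. (Note we already know from Lemma \ref{Lemma:Galois_closure_closed_ball} that $E_{B(P,n)}$ is Galois over $E_{P}$, but Galois alone does not give finiteness — that is exactly the content we need.)

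Next I would set up the induction on $n$. For $n=0$ the ball is a single vertex and $E_{B(P,0)} = P(k(X))$, which is a subfield of $PQ(k(Z))$, hence certainly inside a finite extension. For the inductive step, suppose $E_{B(v,n)}$ is a finite extension of $PQ(k(Z))$ (for every vertex $v$; one should carry the induction with the base vertex allowed to vary, or equivalently note that $E_{B(v,n)}$ is a finite extension of $E_v$, hence of its finite-degree subfield/overfield $PQ(k(Z))$). The key point is that $B(P, n+1)$ is obtained from $B(P,n)$ by adjoining, for each boundary vertex $w$ at distance exactly $n$, the finitely many neighbors of $w$. A vertex $w$ (say red, a map $k(Y)\hookrightarrow \Omega$) has exactly $\deg(g)$ edge-neighbors, each of which is an embedding $k(Z)\hookrightarrow\Omega$ extending $w$, and each such edge's field $E_e$ is generated over $E_w$ by a root of the (separable, degree $\le \deg g$) minimal polynomial of a generator of $k(Z)$ over $k(Y)$. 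Likewise each neighboring blue vertex contributes a field of degree $\le \deg f$ over the edge field. Since $B(P,n)$ has finitely many vertices — this uses that every vertex has valence bounded by $\deg f + \deg g$, so balls are finite — we adjoin only finitely many elements, each algebraic of bounded degree, to the finite extension $E_{B(P,n)}$. Therefore $E_{B(P,n+1)}$ is again a finite extension of $PQ(k(Z))$, completing the induction.

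The main obstacle, such as it is, is bookkeeping rather than conceptual: one must be careful that "finite subgraph $H$" genuinely sits inside a ball, which requires knowing $\mathcal{G}_{gen}$ is connected and locally finite (valence $\le \deg f + \deg g$ at every vertex) — both of which are immediate from the construction, the former by definition of $\mathcal{G}_{gen}$ as a connected component and the latter because the fibers of $f$ and $g$ over an $\Omega$-point are finite of size bounded by the degrees. A secondary subtlety is to phrase the induction hypothesis so that it applies uniformly to all base vertices, not just $P$; the clean way is to prove "$E_{B(v,n)}/E_v$ is finite for all $v$ and all $n$," and then use that $E_v$ is a subfield of $PQ(k(Z))$ (for $v$ a vertex of a ball around $P$, $E_v \subset E_{B(P,\mathrm{dist}(P,v))}$, which by induction is finite over $PQ(k(Z))$). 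With this phrasing each inductive step only invokes generic separability of $f,g$ (guaranteed by our conventions) and the finiteness of the relevant Hom-sets, so no genuine difficulty arises.
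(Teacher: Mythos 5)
Your proof is correct, but it takes a genuinely different route from the paper's. The paper's proof is two lines given machinery already in place: by Lemma \ref{Lemma:E_G_in_biGalois_extension}, $E_{H}$ lands inside $E_{\infty}$, which is the increasing union of the fields $E_{KL\dots K}$ produced by the iterated Galois closures of Section \ref{Sec:A_Recursive_Tower}, each finite over $PQ(k(Z))$; since $E_{H}$ is finitely generated over $k$, it lands in one of these finite stages. You instead induct on the radius of a closed ball containing $H$, using only that $\mathcal{G}_{gen}$ is connected and locally finite (a blue vertex has valence $\deg f$, a red one $\deg g$) and that each edge field is finite over either endpoint field. Both arguments are sound. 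Yours is more elementary and self-contained --- it never invokes the tower $E_{\infty}$ or the Galois-closure construction --- whereas the paper's version identifies the finite extension $F$ canonically as a stage $E_{KL\dots K}$, i.e.\ as the function field of one of the curves $W_{YX\dots Y}$, which is precisely the form used downstream in Lemma \ref{Lemma:etale_specialization}, where one wants the corresponding curve $C$ to be finite \'etale over $Z$ (your compositum of edge fields would also produce such a curve, just less directly). One small simplification to your bookkeeping: there is no need to vary the base vertex in the induction, since $\mathcal{G}_{gen}$ is bipartite and hence every edge of $B(P,n+1)$ has at least one endpoint at distance at most $n$ from $P$, so $E_{B(P,n+1)}$ is generated over $E_{B(P,n)}$ by finitely many elements, each algebraic of degree at most $\max(\deg f,\deg g)$ over a subfield of $E_{B(P,n)}$.
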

\begin{proof}
We have the the following two facts.

\begin{itemize}
\item $E_{H}$ lands inside of $E_{\infty}$, which is exhausted by fields
of the form $E_{KL\dots K}$, by Lemma \ref{Lemma:E_G_in_biGalois_extension}
\item $E_{H}$ is finitely generated over $k$.
\end{itemize}
Therefore $E_{H}$ lands inside of some $E_{KL\dots K}$, a finite
extension of $PQ(k(Z))$, as desired.
\end{proof}
We now analyze the action of various subgroups of $\text{Aut}(E_{\infty})$
on $\mathcal{G}_{gen}$.
\begin{rem}
\label{Remark:Topology_on_automorphism_groups}We take a brief digression
into the structure of automorphism groups of fields. Let $\Omega$
be any field. We endow the group $\text{Aut}(\Omega)$ with the compact-open
topology, considering $\Omega$ to be a discrete set. Given any finite
subset $S\subset\Omega$, the subgroup $\text{Stab}(S)\subset\text{Aut}(\Omega)$
is an open subgroup and as $S$ ranges these form a neighborhood base
of the identity in $\text{Aut}(\Omega)$. If $K\subset\Omega$ is
a separable Galois extension with $K$ finitely generated over its
prime field, the natural map $\text{Gal}(\Omega/K)\subset\text{Aut}(\Omega)$
is an open embedding of topological groups; in other words, the topology
just defined is compatible with the usual profinite topology on Galois
groups.

Note that this procedure generalizes: if $k\subset\Omega$ is a field
extension, we may give the group $\text{Aut}_{k}(\Omega)$ has the
structure of a topological group, where a neighborhood base of the
identity is given by $\text{Stab}(S)$ for finite subsets $S\subset\Omega\backslash k$.
However, $\text{Aut}_{k}(\Omega)$ is not an \emph{open subgroup}
of $\text{Aut}(\Omega)$ unless $k$ is finitely generated over its
prime field.
\end{rem}
Any element $g\in\text{Aut}_{k}(E_{\infty})$ gives a map of graphs
$\mathcal{G}_{gen}\rightarrow\mathcal{G}_{gen}^{full}$ by post-composition:
for instance, an edge $e:k(Z)\rightarrow E_{\infty}\subset\Omega$
is sent to the edge $g\circ e:k(Z)\rightarrow E_{\infty}\subset\Omega$.
In fact, the Galois groups $G_{P}:=\text{Gal}(E_{\infty}/P(k(X)))$
and $G_{Q}:=\text{Gal}(E_{\infty}/Q(k(Y)))$ actually act on the connected
graph: $g\in G_{P}$ sends an edge $e:k(Z)\rightarrow E_{\infty}\subset\Omega$
to $g\circ e:k(Z)\rightarrow E_{\infty}\subset\Omega$, and $g\circ e$
is an edge of the connected graph $\mathcal{G}_{gen}$ because $g$
fixes $P$.
\begin{defn}
\label{Definition:A}Let $A\subset\text{Aut}_{k}(E_{\infty})$ the
subgroup of $\text{Aut}_{k}(E_{\infty})$ sends $\mathcal{G}_{gen}$
to itself with the induced topology, as in Remark \ref{Remark:Topology_on_automorphism_groups}.
Let $A^{PQ}\subset A$ be the subgroup of $A$ generated by $G_{P}$
and $G_{Q}$ with the induced topology from $A$.
\end{defn}
\begin{question}
Is $A\hookrightarrow\text{Aut}_{k}(E_{\infty})$ an isomorphism?
\end{question}
\begin{rem}
The topology on $A^{PQ}$ is uniquely determined by declaring the
compact subgroups $G_{P}$ and $G_{Q}$ to be open.
\end{rem}
By definition, $A$ acts faithfully on $\mathcal{G}_{gen}$: if $g\in A$
acts trivially on $\mathcal{G}_{gen}$, then it acts trivially on
the field generated by all of the vertices and the edges of $\mathcal{G}_{gen}$,
i.e. it is the trivial automorphism of $E_{\infty}$. If we give $\mathcal{G}_{gen}$
the discrete topology, $A^{PQ}$ acts continuously on $\mathcal{G}_{gen}$;
that is, the stabilizer of a vertex is an open subgroup. Let $d=\text{deg}(Z\rightarrow X)$
and $e=\text{deg}(Z\rightarrow Y)$. Then the degree of a blue vertex
is $d$ and the degree of a red vertex is $e$. Moreover, $G_{P}$
acts transitively on the edges coming out of $P$ by Galois theory
and similarly $G_{Q}$ acts transitively on the edges coming out of
$Q$. By conjugating we see that $A^{PQ}\subset\text{Aut}(\mathcal{G}_{gen})$
acts transitively on the edges coming out of any vertex. Therefore
the group $A^{PQ}$ acts transitively on the edges of $\mathcal{G}_{gen}$,
subject to the constraint that colors of the vertices are preserved.
This is recorded in the following corollary.
\begin{cor}
\label{Corollary:Colored_edge_symmetric}In the notation above, $A^{PQ}$
and hence also $A$ act transitively on the edges of $\mathcal{G}_{gen}$,
subject to the constraint that the colors of the vertices are preserved.
We say the pair $(\mathcal{G}_{gen},A^{PQ})$ is colored-edge-symmetric.
\end{cor}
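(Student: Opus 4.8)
The plan is to deduce colored-edge-symmetry from two ingredients: the purely Galois-theoretic transitivity of $G_P$ and $G_Q$ on the edges incident to the distinguished vertices $P$ and $Q$, and the connectedness of $\mathcal{G}_{gen}$, which upgrades this local transitivity to a global statement. First I would note that every element of $A$ automatically respects the $2$-coloring: a blue vertex is by definition a $k$-algebra embedding $k(X)\hookrightarrow\Omega$ and a red vertex one of $k(Y)$, and post-composition by $g\in\text{Aut}_k(E_\infty)$ carries restrictions to $k(X)$ to restrictions to $k(X)$; so the coloring constraint is free, and the only content is transitivity within each color class (and on the edges).

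For the local statement: an edge of $\mathcal{G}_{gen}$ incident to $P$ is a $k$-algebra embedding $e\colon k(Z)\hookrightarrow\Omega$ with $e|_{k(X)}=P$. Since $k(Z)/k(X)$ is finite and generically separable and $E_\infty=E_{\mathcal{G}_{gen}}$ is Galois over $P(k(X))$, every such $e$ has image inside $E_\infty$, and any two of them differ by an element of $\text{Gal}(E_\infty/P(k(X)))=G_P$. Hence $G_P$ acts transitively on the $d=\deg f$ edges at $P$, and symmetrically $G_Q$ acts transitively on the $e=\deg g$ edges at $Q$.

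Next I would propagate this to all of $\mathcal{G}_{gen}$, proving that $A^{PQ}$ acts transitively on blue vertices (the red case being identical). Because edges only join blue to red, $\mathcal{G}_{gen}$ is bipartite for the coloring, so a blue vertex $v$ lies at even distance from $P$; argue by induction on that distance, the inductive step reducing to the case $d(P,v)=2$. For such a $v$, pick a path $P,w,v$ with $w$ red; since $w$ and $Q$ are both neighbors of $P$, local transitivity of $G_P$ gives $h\in G_P$ with $h(Q)=w$, so $h^{-1}(v)$ is a blue neighbor of $Q$, and local transitivity of $G_Q$ gives $k\in G_Q$ with $k(P)=h^{-1}(v)$, whence $(hk)(P)=v$. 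For larger even distance $2n$, choose a geodesic $P=v_0,\dots,v_{2n}=v$, use the induction hypothesis to move $P$ to $v_{2n-2}$, and reduce to the distance-$2$ case. Finally, given an arbitrary edge $e'$ joining a blue vertex $u$ to a red vertex, choose $g\in A^{PQ}$ with $g(P)=u$; then $g^{-1}(e')$ is an edge at $P$, and $G_P$ supplies $h$ with $(gh)(PQ)=e'$. Thus $A^{PQ}$, hence $A$, is transitive on edges while preserving colors.

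The Galois-theoretic local transitivity and the induction bookkeeping are routine; the only point needing care is making sure no circularity creeps in when one wants to treat conjugates $gG_Pg^{-1}$ as ``vertex stabilizers acting transitively on incident edges'' — equivalently that $E_\infty$ is Galois over the vertex field $E_v$ for every $v$, which is a priori only clear once vertex-transitivity is known. I avoid this by phrasing the induction entirely in terms of the already-established transitivity of $G_P$ and $G_Q$ on their \emph{own} incident edges, as above, so no appeal to the conjugates is needed. I do not expect any serious obstacle: the corollary is essentially a formal consequence of Galois theory together with the connectedness of $\mathcal{G}_{gen}$.
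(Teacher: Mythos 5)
Your proof is correct and follows essentially the same route as the paper: Galois theory gives transitivity of $G_{P}$ and $G_{Q}$ on the edges at $P$ and $Q$, and connectedness of $\mathcal{G}_{gen}$ propagates this to every edge. The paper phrases the propagation as ``by conjugating'' the stabilizers along the inductively constructed orbit of the edge $PQ$, which is the same bookkeeping you carry out by composing elements of $G_{P}$ and $G_{Q}$ directly, so the circularity you flag does not arise in either formulation.
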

\begin{question}
\label{Question:no_core_a_tree?}If $X\leftarrow Z\rightarrow Y$
is a minimal correspondence with no core, does $\mathcal{G}_{gen}$
have any cycles? What if it is étale?
\end{question}
The graph $\mathcal{G}_{gen}$ being a tree has consequences for the
structure of the group $A^{PQ}$. To state these, we need a theorem
of Serre.
\begin{thm}
(Serre) Let $G$ be a group acting on a graph $X$, and let $e$ be
an edge of $X$ connecting vertices $p$ and $q$. Suppose that $e$
is a fundamental domain for the action. Let $G_{p}$, $G_{q}$, and
$G_{e}$ be the stabilizers in $G$ of $p$, $q$, and $e$ respectively.
Then the following are equivalent.

\begin{enumerate}
\item $X$ is a tree
\item The homomorphism $G_{p}*_{G_{e}}G_{q}\rightarrow G$ induced by the
inclusions $G_{p}\rightarrow G$ and $G_{q}\rightarrow G$ is an isomorphism
\end{enumerate}
\end{thm}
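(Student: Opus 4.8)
The plan is to prove the two implications separately, with the content residing almost entirely in $(1)\Rightarrow(2)$. For the direction $(2)\Rightarrow(1)$: assuming $G \cong G_p *_{G_e} G_q$, I would build the Bass--Serre tree of this amalgam directly and produce a $G$-equivariant isomorphism with $X$. Concretely, the vertices of the standard tree of an amalgam are the cosets $G/G_p \sqcup G/G_q$ and the edges are the cosets $G/G_e$, with incidence given by inclusion of cosets; since $e$ is a fundamental domain for $G \curvearrowright X$ with stabilizers $G_p, G_q, G_e$, the orbit maps $gG_p \mapsto gp$, $gG_q \mapsto gq$, $gG_e \mapsto ge$ give a well-defined surjective morphism of graphs from the standard tree to $X$, and it is injective precisely because the stabilizers are exactly $G_p$, $G_q$, $G_e$ (no extra identifications). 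So $X$ is isomorphic to the tree of the amalgam, hence a tree. This uses the standard fact (which I would either cite from \cite{serre1977arbres} or sketch) that the Bass--Serre tree of a nontrivial amalgam is connected and acyclic.

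For $(1)\Rightarrow(2)$: assume $X$ is a tree. First, connectedness of $X$ together with the fact that $e$ is a fundamental domain shows that $G$ is generated by $G_p$ and $G_q$: any vertex is $G$-translate of $p$ or $q$, and walking along a path from $p$ one shows inductively that each successive vertex lies in the $\langle G_p, G_q\rangle$-orbit of $p$ or $q$, so $\langle G_p, G_q\rangle$ acts transitively on vertices of each color, which forces $\langle G_p, G_q\rangle = G$ (since any $g \in G$ sends $p$ into this orbit). Hence the natural map $\Phi\colon G_p *_{G_e} G_q \to G$ is surjective. For injectivity, take a nontrivial reduced word $w = g_1 g_2 \cdots g_n$ in the amalgam (alternating between $G_p \setminus G_e$ and $G_q \setminus G_e$) and show $\Phi(w) \neq 1$ by a ping-pong / normal-form argument played on the tree: build the path $p, g_1 q, g_1 g_2 p, \dots$ (or starting at $q$, depending on parity) and verify that consecutive edges are distinct — this is exactly where $g_i \notin G_e$ is used — so the path is reduced (no backtracking), and since $X$ is a tree a reduced path of positive length cannot be closed, so the endpoint differs from the start, whence $\Phi(w)$ does not fix the starting vertex and is therefore nontrivial.

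The main obstacle is the injectivity argument in $(1)\Rightarrow(2)$: one must carefully set up the correspondence between reduced words in the amalgam and non-backtracking paths in $X$ emanating from the fundamental edge, checking that the "reduced word" condition ($g_i \notin G_e$, alternating factors) translates exactly into "the path does not immediately backtrack," and that this is preserved under the successive translates. The bookkeeping with left cosets and with which vertex ($p$ or $q$) one is currently sitting over requires care, but it is the standard Bass--Serre normal form argument. Once this dictionary is in place, acyclicity of $X$ delivers injectivity immediately. I would organize the write-up so that both directions factor through the explicit description of the tree of the amalgam, making $(2)\Rightarrow(1)$ nearly formal and concentrating the real work in the normal-form lemma; alternatively, since this is a named theorem of Serre, it is legitimate to simply cite \cite{serre1977arbres} (Théorème 6 of §I.4.1, or the equivalent) and omit the proof entirely, which is likely what the paper does.
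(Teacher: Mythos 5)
Your sketch is the standard Bass--Serre normal-form argument and is sound: the two directions, the generation-from-connectedness step, and the translation of ``reduced word'' into ``non-backtracking path'' are exactly the content of Serre's proof. The paper itself supplies no argument at all, simply citing Th\'eor\`eme 6 on page 48 of \cite{serre1977arbres}, precisely as you anticipated in your closing sentence.
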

\begin{proof}
This is a direct translation of Théorèm 6 on Page 48 of \cite{serre1977arbres}. 
\end{proof}
\begin{prop}
\label{Proposition:tree_amalgamated_free_product}Suppose $\mathcal{G}_{gen}$
is a tree. Then the natural map $G_{P}*_{G_{PQ}}G_{Q}\rightarrow A^{PQ}$
is an isomorphism of topological groups.
\end{prop}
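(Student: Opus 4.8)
The plan is to verify the hypotheses of the quoted theorem of Serre for the action of $A^{PQ}$ on $\mathcal{G}_{gen}$, taking $p=P$, $q=Q$, $e=PQ$, thereby obtaining an isomorphism of abstract groups, and then to upgrade it to a homeomorphism using that $G_{PQ}$ has finite index in both $G_{P}$ and $G_{Q}$.

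First I would pin down the stabilizers for the $A^{PQ}$-action. A blue vertex of $\mathcal{G}_{gen}$ is a $k$-embedding $k(X)\hookrightarrow E_{\infty}$ (recall $E_{\mathcal{G}_{gen}}=E_{\infty}$) and $A^{PQ}$ acts by post-composition, so $g\in A^{PQ}$ fixes $P$ precisely when $g$ restricts to the identity on $P(k(X))$, i.e. when $g\in G_{P}$; hence $(A^{PQ})_{P}=G_{P}$, and symmetrically $(A^{PQ})_{Q}=G_{Q}$. Because the correspondence is minimal, $k(Z)=k(X)\cdot k(Y)$, so $PQ(k(Z))=P(k(X))\cdot Q(k(Y))$ inside $E_{\infty}$ and the stabilizer of the edge $PQ$ is $G_{P}\cap G_{Q}=G_{PQ}$. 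Since $A^{PQ}$ preserves the $2$-coloring it cannot interchange the blue vertex $P$ with the red vertex $Q$, so the action is without inversion and $P$, $Q$ lie in distinct orbits. Next I would check that $PQ$ is a fundamental domain: Corollary \ref{Corollary:Colored_edge_symmetric} gives that $A^{PQ}$ is transitive on edges (preserving colors), so every edge lies in the orbit of $PQ$; as $\mathcal{G}_{gen}$ is connected and contains the edge $PQ$, it has no isolated vertices, so every vertex is an endpoint of an edge and hence lies in the orbit of $P$ or of $Q$. Thus $A^{PQ}\backslash\mathcal{G}_{gen}$ is a single edge with two distinct vertices admitting $PQ$ as a section, i.e. $PQ$ is a fundamental domain. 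Serre's theorem then applies: since $\mathcal{G}_{gen}$ is a tree, the natural map $\varphi\colon G_{P}*_{G_{PQ}}G_{Q}\to A^{PQ}$ is an isomorphism of abstract groups.

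It remains to see that $\varphi$ is a homeomorphism. Inside $G_{P}$ the subgroup $G_{PQ}$ is the stabilizer of the edge $PQ$, so $[G_{P}:G_{PQ}]$ is the number of edges at $P$, namely $d=\deg(Z\to X)<\infty$; likewise $[G_{Q}:G_{PQ}]=e<\infty$. Hence $G_{PQ}$ is open in the profinite groups $G_{P}$ and $G_{Q}$, and they induce the same topology on it, which is exactly the situation in which the abstract amalgam $B=G_{P}*_{G_{PQ}}G_{Q}$ carries a unique group topology in which $G_{P}$ (equivalently $G_{Q}$) is an open subgroup with its profinite topology — the same recipe by which, per the remark after Definition \ref{Definition:A}, the topology on $A^{PQ}$ is characterized. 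Since $\varphi$ restricts to the identity on the open subgroup $G_{P}$, it is a homomorphism continuous on an open subgroup, hence continuous; and $\varphi^{-1}$ is continuous on the open subgroup $\varphi(G_{P})=G_{P}$ of $A^{PQ}$ for the same reason, so $\varphi$ is a homeomorphism.

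The only genuinely delicate point is this last paragraph — identifying the canonical topology on the abstract amalgam, matching it with that of $A^{PQ}$, and seeing that the continuous bijection $\varphi$ is open. Everything else is an unwinding of the definitions together with the edge-transitivity already recorded in Corollary \ref{Corollary:Colored_edge_symmetric} and the quoted theorem of Serre.
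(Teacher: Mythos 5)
Your proposal is correct and follows essentially the same route as the paper: verify that the coloring rules out inversions, use Corollary \ref{Corollary:Colored_edge_symmetric} to see that the segment $PQ$ is a fundamental domain, apply Serre's theorem to get the abstract isomorphism, and then use that $G_{PQ}$ is open of finite index in both $G_{P}$ and $G_{Q}$ to match the amalgam's natural topology with that of $A^{PQ}$. Your write-up simply makes explicit a few points the paper leaves implicit (the identification of the vertex and edge stabilizers, and the continuity of $\varphi$ and $\varphi^{-1}$ via openness of $G_{P}$), which is a welcome but not substantively different elaboration.
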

\begin{proof}
There is no element $a\in A^{PQ}$ that flips any edge $e$ of $\mathcal{G}_{gen}$
because $A^{PQ}$ preserves the coloring. By Corollary \ref{Corollary:Colored_edge_symmetric},
the segment $PQ$ is a fundamental domain for the action of $A^{PQ}$
on $\mathcal{G}_{gen}$. Therefore, by Serre's Theorem, the fact that
$\mathcal{G}_{gen}$ is a tree implies the induced map $G_{P}*_{G_{PQ}}G_{Q}\rightarrow A^{PQ}$
is an isomorphism of abstract groups. The group $G_{P}*_{G_{PQ}}G_{Q}$
has a natural topology generated by the topologies of $G_{P}$ and
$G_{Q}$ (because $G_{PQ}$ is an \emph{open subgroup} of both $G_{P}$
and $G_{Q}$), and endowed with this topology the above map is an
isomorphism of topological groups.
\end{proof}
When $\mathcal{G}_{gen}$ is a tree, we may describe the pair $(\mathcal{G}_{gen},A^{PQ})$
in a different way. Given any compact open subgroup $G\subset A^{PQ}$
and any vertex $v\in\mathcal{G}_{gen}$, the orbit $G.v$ is compact
and discrete (as we gave $\mathcal{G}_{gen}$ the discrete topology)
and is hence finite. Therefore $G$ acts on a finite subtree $T$
of $\mathcal{G}_{gen}$ and hence the action factors through a finite
quotient $H$ of $G$.
\begin{lem}
A finite group $H$ acting on a finite tree $T$ always as a fixed
point (though not necessarily a fixed vertex.)
\end{lem}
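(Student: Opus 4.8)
The plan is to prove that a finite group $H$ acting on a finite tree $T$ has a fixed point, where by ``fixed point'' we allow a point in the geometric realization of $T$ (equivalently, either a vertex or the midpoint of an edge). I would argue by induction on the number of vertices of $T$, using the structure of trees — in particular the fact that a finite tree with at least one edge has leaves (vertices of degree $1$).

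First I would handle the base cases: if $T$ has a single vertex, that vertex is fixed, and if $T$ has a single edge, then either $H$ fixes each endpoint or $H$ swaps the two endpoints, in which case the midpoint of the edge is fixed. For the inductive step, suppose $T$ has at least two edges. Let $L \subset T$ be the set of leaves. Since $H$ acts by graph automorphisms, $H$ permutes $L$, and hence $H$ maps the subtree $T' := T \setminus L$ (remove all leaves and the edges incident to them) to itself; one checks $T'$ is again a nonempty tree with strictly fewer vertices (nonempty because $T$ is finite with $\geq 2$ edges, so not every vertex is a leaf). By the induction hypothesis, $H$ has a fixed point in $T'$, which is then also a fixed point in $T$.

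The main subtlety — and the reason one must allow a fixed point rather than a fixed vertex — is exactly the edge-flip phenomenon: $\mathbb{Z}/2$ acting on a single edge by swapping endpoints has no fixed vertex but fixes the midpoint. One has to be careful that the ``prune the leaves'' operation genuinely decreases the vertex count and never empties the tree prematurely; the only case where pruning leaves could destroy the tree is when $T$ is a single edge (two leaves, pruning gives the empty graph) or a single vertex, which is why those are treated separately as base cases. An alternative, perhaps cleaner, approach would be to invoke the standard fact that a finite group acting on a nonempty tree (or more generally a CAT(0) space, or a contractible finite simplicial complex via the combinatorial fixed-point lemma) has a fixed point; indeed, this is itself part of Serre's development in \cite{serre1977arbres}. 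I would likely just cite Serre for this, or give the short inductive argument above, since it is elementary and self-contained. The takeaway for the application is that any compact open subgroup $G \subset A^{PQ}$, acting through a finite quotient on a finite subtree of $\mathcal{G}_{gen}$, stabilizes a vertex — hence is contained in a vertex stabilizer — which is the key input to the description of maximal compact subgroups.
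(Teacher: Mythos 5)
Your argument is correct, but it takes a different route from the paper's. The paper uses a one-shot invariant: the height $h(v)$ of a vertex (its maximal distance to a leaf) is preserved by every automorphism, there are at most two vertices of minimal height in a tree, and if there are two they are adjacent, so either that vertex or that midpoint is fixed. Your proof instead inducts on the number of vertices by pruning all leaves at once; since $H$ permutes the set of leaves, it preserves the pruned subtree $T'$, which is again a nonempty tree when $T$ has at least three vertices (a tree on $n\geq 3$ vertices cannot consist entirely of leaves, by the degree count $2(n-1)=\sum\deg$), and a fixed point of $H$ on $T'$ is a fixed point on $T$. The two arguments are close cousins --- the minimal-height vertices are exactly what survives iterated pruning --- but yours is packaged as an induction with explicit base cases, while the paper's avoids induction entirely by reading off the answer from the height function. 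Both correctly isolate the edge-flip phenomenon as the reason one can only guarantee a fixed point rather than a fixed vertex, and both feed into the same application: in $\mathcal{G}_{gen}$ the coloring rules out edge flips, so a compact open subgroup in fact fixes a vertex. Your fallback of citing Serre is also legitimate; this is Corollaire 1 to Proposition 19 in \cite{serre1977arbres}.
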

\begin{proof}
This is well known and Aaron Bernstein explained the following simple
proof to us.

Let the height $h(v)$ of a vertex $v$ be the maximal distance of
$v$ to any leaf. Any automorphism of $T$ preserves heights. If there
is a unique vertex $v$ of minimal height, we are done, so suppose
there is another vertex $w$ of minimal height. Then $v$ and $w$
must be connected by an edge: if the unique path between them contained
an intermediate vertex $u$, then some thought shows that $h(u)<h(v)$.
As $T$ is a tree, there can be \emph{at most two }vertices of minimal
height. If there are two, then their midpoint is a fixed point for
\emph{any }automorphism of $T$.
\end{proof}
Therefore there must be a point $p\in T$ that is fixed by $H$; here
$T$ is thought of as a topological space. If $p$ were not a vertex
$T$, $H$ would fix the two neighboring vertices of the edge $p$
is on because $H$ respects the coloring of the graph. Therefore $H$
fixes at least one vertex $v$. On the other hand, given any vertex
$v$, the subgroup $G_{v}$ fixing $v$ is a compact open subgroup.
Therefore, the vertices of $\mathcal{G}_{gen}$ are in natural bijective
correspondence with the maximal open compact subgroups $G$ of $A^{PQ}$.
\begin{cor}
\label{corollary:max_compact}If $\mathcal{G}_{gen}$ is a tree, any
maximal compact open subgroup $G$ of $A^{PQ}$ is conjugate to either
$G_{P}$ or $G_{Q}$.
\end{cor}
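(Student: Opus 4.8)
The plan is to leverage the structural dictionary already established between the tree $\mathcal{G}_{gen}$ and the group $A^{PQ}$: namely, that vertices of $\mathcal{G}_{gen}$ correspond bijectively to maximal compact open subgroups of $A^{PQ}$, with the vertex $v$ matched to its stabilizer $G_v$. Granting this correspondence (which is assembled in the paragraph preceding the corollary from the fixed-point lemma for finite groups acting on finite trees), the corollary reduces to showing that every vertex of $\mathcal{G}_{gen}$ lies in the $A^{PQ}$-orbit of either $P$ or $Q$. First I would recall from Corollary \ref{Corollary:Colored_edge_symmetric} that $A^{PQ}$ acts transitively on the edges of $\mathcal{G}_{gen}$ subject to preserving the $2$-coloring of the vertices; in particular, since every vertex is an endpoint of some edge, $A^{PQ}$ acts transitively on the blue vertices and transitively on the red vertices. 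Hence every blue vertex is $A^{PQ}$-conjugate to $P$ and every red vertex is $A^{PQ}$-conjugate to $Q$.

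Next I would combine this with the identification $G_v$ is a maximal compact open subgroup for each vertex $v$, together with the fact that conjugation by $g \in A^{PQ}$ carries $G_v$ to $G_{g\cdot v}$. Given an arbitrary maximal compact open subgroup $G \subset A^{PQ}$, the finite-tree fixed-point argument supplies a vertex $v$ fixed by $G$, so $G \subseteq G_v$; maximality of $G$ then forces $G = G_v$. Since $v$ is either blue or red, it is conjugate to $P$ or to $Q$ by the transitivity just noted, and therefore $G = G_v$ is conjugate to $G_P = \mathrm{Gal}(E_\infty/P(k(X)))$ or to $G_Q = \mathrm{Gal}(E_\infty/Q(k(Y)))$ respectively. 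This is exactly the assertion of the corollary.

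The one point requiring a little care — and the place I expect the only real friction — is verifying that $G_P$ and $G_Q$ are themselves \emph{maximal} among compact open subgroups, so that the correspondence "vertices $\leftrightarrow$ maximal compact opens" is genuinely onto and not merely a map from vertices into the poset of compact opens. This follows because $G_P$ is the full stabilizer of the vertex $P$: any compact open $G \supsetneq G_P$ would, by the fixed-point lemma, fix some vertex $w$, forcing $G \subseteq G_w$; but then $G_P \subsetneq G_w$, and since both are vertex stabilizers in a tree on which $A^{PQ}$ acts with edge $PQ$ as fundamental domain (Proposition \ref{Proposition:tree_amalgamated_free_product}), comparing with the amalgam structure $G_P *_{G_{PQ}} G_Q \cong A^{PQ}$ shows $G_P$ is not properly contained in any other vertex stabilizer — the vertex stabilizers are precisely the conjugates of $G_P$ and of $G_Q$, and none of these conjugates contains $G_P$ properly. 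Once this maximality is in hand, the argument above closes, and the corollary is proved.
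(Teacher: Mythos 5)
Your proposal is correct and follows essentially the same route as the paper: the fixed-point lemma for finite groups acting on finite trees shows every maximal compact open subgroup equals some vertex stabilizer $G_v$, and the color-preserving edge-transitivity of Corollary \ref{Corollary:Colored_edge_symmetric} conjugates $v$ to $P$ or $Q$. Your extra verification that $G_P$ and $G_Q$ are themselves maximal is a detail the paper leaves implicit in its claimed bijection between vertices and maximal compact open subgroups, but it is not needed for the corollary as stated, since maximality of $G$ together with $G\subseteq G_v$ already forces $G=G_v$.
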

\begin{proof}
The discussion above shows that every maximal compact open subgroup
$G$ of $A^{PQ}$ is $G_{v}$ for some vertex $v$ of $\mathcal{G}_{gen}$.
The group $G_{v}$ is conjugate in $A^{PQ}$ to $G_{P}$ or $G_{Q}$
by Corollary \ref{Corollary:Colored_edge_symmetric}. Finally, $G_{P}$
is not conjugate to $G_{Q}$ in $A^{PQ}$ because the action of $A^{PQ}$
on $\mathcal{G}_{gen}$ preserves the coloring.
\end{proof}
\begin{rem}
\label{Remark:tree_conjugation_action}If $\mathcal{G}_{gen}$ is
a tree, then the action of $A^{PQ}$ on $\mathcal{G}_{gen}$ is the
conjugation action on the maximal compact subgroups.
\end{rem}
We may similarly describe the adjacency relation in $\mathcal{G}_{gen}$
from the group $A^{PQ}$ when $\mathcal{G}_{gen}$ is a tree. Recall
our standing assumption that the original correspondence $X\leftarrow Z\rightarrow Y$
is minimal (in order for $\mathcal{G}_{gen}$ to not have multiple
edges.) As above, we suppose the correspondence is of type $(d,e)$.
Then a blue vertex $G_{v}$ and a red vertex $G_{w}$ are joined by
an edge if the intersection $G_{v}\cap G_{w}$ (inside of $A^{PQ}$)
has index $d$ inside of $G_{v}$ and index $e$ inside of $G_{w}$.

\section{\label{Section:Symmetric_correspondences}Symmetric Correspondences}
\begin{defn}
\label{Definition:Symmetric_correspondence}A symmetric correspondence
of curves over $k$ is a self-correspondence $X\overset{f}{\leftarrow}Z\overset{g}{\rightarrow}X$
over curves over $k$ such that there is an involution $w\in\text{Aut}(Z)$
with $f\circ w=g$, i.e. $w$ swaps $f$ and $g$. We denote by $w^{*}$
the induced involution on $k(Z)$.

Note that if the correspondence is minimal, $w$ is unique if it exists.
Therefore being symmetric is a property and not a structure of a minimal
correspondence.
\end{defn}
\begin{lem}
\label{Lemma:Lift_involution}Let $X\overset{f}{\leftarrow}Z\overset{g}{\rightarrow}X$
be a symmetric correspondence of curves over $k$ without a core.
Suppose $Z$ is hyperbolic or $k\cong\mathbb{F}$. Any $w\in\text{Aut}(Z)$
that swaps $f$ and $g$ lifts to an automorphism $\tilde{w}$ of
$W_{\infty}$. We denote by $\tilde{w}^{*}$ the associated automorphism
of $E_{\infty}=k(W_{\infty})$.

\begin{proof}
We proceed exactly as in the discussion at the beginning of Section
\ref{Sec:A_Recursive_Tower}: let $W_{f}$ (resp. $W_{g}$) denote
a Galois closure of arrow $f$ (resp. $g$). The automorphism $w$
of $Z$ swaps $f$ and $g$ and hence we can choose an isomorphism
$w_{1}:W_{g}\rightarrow W_{f}$ living over $w$ on $Z$:
\[
\xymatrix{W_{g}\ar[d]\ar[r]^{w_{1}} & W_{f}\ar[d]\\
W\ar[r]^{w} & W
}
\]
Similarly, we can chose an isomorphism $w_{2}:W_{gf}\rightarrow W_{fg}$
living over $w$ on $Z$, again because $w$ swaps the roles of $f$
and $g$. Continuing in this fashion, we get an isomorphism of towers
\[
\tilde{w}:W_{gf\dots}\rightarrow W_{fg\dots}
\]
By Lemma \ref{Lemma:final_towers}, $W_{fg\dots}$ is isomorphic to
$W_{gf}$ as a pro-curve over $W$ and we may think of $\tilde{w}$
as an automorphism of $W_{\infty}$ living over $w\in\text{Aut}(Z)$.
\end{proof}
\end{lem}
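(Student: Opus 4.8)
The plan is to mimic exactly the iterated Galois-closure construction from the beginning of Section \ref{Sec:A_Recursive_Tower}, but carry along the involution $w$ at each stage. First I would recall that since the correspondence is symmetric and minimal, $w\in\text{Aut}(Z)$ with $f\circ w = g$ is uniquely determined, and $w^2 = \text{id}$. The key observation is that a Galois closure is canonical up to isomorphism, so an isomorphism of the base data (here, $w$ intertwining $f$ and $g$) lifts to an isomorphism of the Galois closures. Concretely, let $W_f$ be a Galois closure of $f\colon Z\to X$ and $W_g$ a Galois closure of $g\colon Z\to X$; since $w$ carries the morphism $f$ to the morphism $g$ (i.e.\ $f = g\circ w^{-1}$ as maps $Z\to X$, using $w=w^{-1}$), pulling back the cover $W_f\to X$ along $\text{id}_X$ and using $w$ on the source $Z$ identifies it with $W_g\to X$; this produces an isomorphism $w_1\colon W_g\to W_f$ sitting over $w$ on $Z$.

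Next I would iterate. Having $w_1\colon W_g\to W_f$ over $w$, I take Galois closures one more step: $W_{gf}$ is the Galois closure of the composite $W_g\to Z\to X$ (wait—of $W_g\to X$; I should be careful that in the notation $W_{YX\dots}$ the subscripts record which of $f,g$ one is closing over at each stage), and likewise $W_{fg}$. The isomorphism $w_1$ identifies the relevant morphisms after swapping $f$ and $g$, so by the same canonicity of Galois closures it lifts to $w_2\colon W_{gf}\to W_{fg}$ over $w$. Continuing, one gets compatible isomorphisms $w_n$ at every finite level, hence an isomorphism of pro-curves
\[
\tilde w\colon W_{gf\dots}\longrightarrow W_{fg\dots}.
\]
Now I invoke Lemma \ref{Lemma:final_towers}: reversing the roles of $f$ and $g$ yields mutually final systems, so $W_{fg\dots}\cong W_{gf\dots}\cong W_\infty$ as pro-curves over $Z$ (equivalently, as subfields of $\Omega$, $E_{fg\dots}=E_{gf\dots}=E_\infty$). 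Composing, $\tilde w$ becomes an automorphism of $W_\infty$ lying over $w$ on $Z$; dually, $\tilde w^*$ is an automorphism of $E_\infty$ restricting to $w^*$ on $k(Z)$. (I would also note that the hypothesis "$Z$ hyperbolic or $k\cong\mathbb F$" is exactly what guarantees, via Lemma \ref{Lemma:No_Galois} / Corollary \ref{Corollary:Finite_No_Galois}, that the tower $W_\infty$ exists and is a pro-(curve over $k$), so that the construction makes sense.)

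The main obstacle — really the only subtle point — is pinning down the precise sense in which "$w$ swaps $f$ and $g$" lets one conclude that the $n$-th Galois closure in the $f$-first tower is isomorphic to the $n$-th Galois closure in the $g$-first tower, compatibly as $n$ grows. This is a bookkeeping issue about which morphism one is taking the Galois closure of at each stage, and about the fact that Galois closures are functorial with respect to isomorphisms of the target-and-source data; once phrased correctly it is formal, but it must be set up carefully so that the isomorphisms $w_n$ are genuinely compatible in the inverse system and not merely level-by-level. After that, the appeal to Lemma \ref{Lemma:final_towers} to reconcile $W_{fg\dots}$ with $W_{gf\dots}$ (hence with $W_\infty = W_{XYX\dots}$) is immediate, and the passage to function fields $E_\infty = k(W_\infty)$ and the involution $\tilde w^*$ is purely formal. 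Since the excerpt's own proof sketch follows precisely this route, I expect no hidden difficulty beyond the indexing care just described.
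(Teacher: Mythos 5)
Your proposal is correct and follows essentially the same route as the paper's own proof: lift $w$ stage by stage through the iterated Galois closures to get an isomorphism $\tilde{w}\colon W_{gf\dots}\to W_{fg\dots}$, then use Lemma \ref{Lemma:final_towers} to identify the two towers with $W_{\infty}$. The extra care you take with the indexing and the canonicity of Galois closures is exactly the (implicit) content of the paper's argument.
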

\begin{rem}
Another way of phrasing Lemma \ref{Lemma:Lift_involution} is as follows.
If $X\overset{f}{\leftarrow}Z\overset{g}{\rightarrow}X$ is a symmetric
correspondence without a core with $Z$ hyperbolic, then for any choice
of symmetry $w$, the following map is (infinite) Galois.
\[
W_{\infty}\rightarrow Z/<w>
\]
From this perspective, it is clear that the lift $\tilde{w}$ is not
unique.
\end{rem}
\begin{defn}
Let $X\overset{f}{\leftarrow}Z\overset{g}{\rightarrow}X$ be a symmetric
correspondence of curves over $k$ without a core where $Z$ is hyperbolic
or $k\cong\mathbb{F}$. Pick a symmetry $w$ and a lift $\tilde{w}$
to $W_{\infty}$, which exists by Lemma \ref{Lemma:Lift_involution}.
Let $\tilde{w}^{*}$ be the associated automorphism of $E_{\infty}$.
Define $A^{w}\subset\text{Aut}_{k}(E_{\infty})$ be the subgroup generated
by $A^{PQ}$ and $\tilde{w}^{*}$. We give the subgroup $A^{w}\subset A$
the induced topology from $A$.
\end{defn}
\begin{rem}
The notation $A^{w}$ is \emph{a priori} ambiguous as it seems to
depend on a choice of lift $\tilde{w}$. Pick a second lift $\tilde{w}'$
of $w$. Then $\tilde{w}\tilde{w}'$ fixes $Z$ as $w$ was an involution.
In particular, $\tilde{w}^{*}\tilde{w}'^{*}\in\text{Gal}(E_{\infty}/k(Z))\subset A^{PQ}$,
so $A^{w}$ is independent of the choice of lift of $w$.
\end{rem}
\begin{cor}
\label{Corollary:one_transitive}Let $X\overset{f}{\leftarrow}Z\overset{g}{\rightarrow}X$
be a symmetric correspondence of curves over $k$ without a core with
symmetry $w$. Suppose $Z$ is hyperbolic or $k\cong\mathbb{F}$ and
let $\tilde{w}$ be a lift of the symmetry to $W_{\infty}$. Then
$A^{w}$ and hence $A$ acts transitively on the oriented edges of
$\mathcal{G}_{gen}$.
\end{cor}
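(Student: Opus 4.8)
The plan is to show that $A^{PQ}$ already acts transitively on oriented edges that preserve the coloring (this is essentially Corollary~\ref{Corollary:Colored_edge_symmetric}, or rather its refinement), and that the lift $\tilde{w}^*$ supplies the single extra symmetry needed to swap the two colors, i.e. to reverse the orientation of the distinguished edge $PQ$. Combined, these two facts give transitivity on \emph{all} oriented edges. Concretely, an oriented edge of $\mathcal{G}_{gen}$ is an embedding $e\colon k(Z)\hookrightarrow E_\infty\subset\Omega$ together with a choice of which endpoint is the source; equivalently, since the graph is $2$-colored, an oriented edge is just an edge together with a declaration of which of its two endpoints is blue-first or red-first. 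So there are two $A^{PQ}$-orbits of oriented edges at most: those oriented ``blue to red'' and those oriented ``red to blue.'' I would first recall that $A^{PQ}$ acts transitively on edges (Corollary~\ref{Corollary:Colored_edge_symmetric}) and, since it preserves the coloring, transitively on each of these two classes separately.

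The key step is then to produce an element of $A^w$ carrying the oriented edge $(P,Q)$ (blue source, red target) to the oriented edge $(Q,P)$ (red source, blue target). Here is where the symmetry $w$ enters: $w\in\operatorname{Aut}(Z)$ satisfies $f\circ w = g$, so the induced $w^*$ on $k(Z)$ intertwines $f^*\colon k(X)\hookrightarrow k(Z)$ and $g^*\colon k(Y)\hookrightarrow k(Z)$. In the symmetric setting $X=Y$, so after the lift $\tilde w^*$ of Lemma~\ref{Lemma:Lift_involution}, conjugation by $\tilde w^*$ on $\operatorname{Aut}_k(E_\infty)$ swaps $G_P = \operatorname{Gal}(E_\infty/P(k(X)))$ and $G_Q = \operatorname{Gal}(E_\infty/Q(k(Y)))$ — more precisely it sends one to a conjugate of the other realizing the color swap. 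Applying $\tilde w^*$ to the embedding $PQ\colon k(Z)\hookrightarrow E_\infty$ gives the embedding $PQ\circ w^*$, which is the \emph{same edge} of $\mathcal{G}_{gen}$ (the subfield $PQ(k(Z))=PQ(w^*(k(Z)))$ is unchanged) but with its two endpoints interchanged, because $w^*$ exchanges the subfield $f^*(k(X))$ with $g^*(k(Y))$ inside $k(Z)$. Thus $\tilde w^*$ is an orientation-reversing automorphism of the edge $PQ$. Combining: given any oriented edge, first move it to $(P,Q)$ or $(Q,P)$ by an element of $A^{PQ}$ (possible since $A^{PQ}$ is transitive on unoriented edges and preserves colors), and then, if the orientation is wrong, apply $\tilde w^*$.

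The main obstacle — really the only point requiring care — is checking that $\tilde w^*$ genuinely flips the edge $PQ$ rather than fixing it as an oriented edge, i.e.\ that $PQ\circ w^* \neq PQ$ as oriented edges while $PQ\circ w^* = PQ$ as unoriented edges. The unoriented equality is immediate since $w^*$ is an automorphism of $k(Z)$, so the image subfield is unchanged and $\tilde w^*$ stabilizes $E_\infty$ (by Lemma~\ref{Lemma:Lift_involution}, $\tilde w$ is an automorphism of $W_\infty$). For the orientation flip, one observes that the blue endpoint of the edge $PQ\circ w^*$ is the restriction $PQ\circ w^*|_{k(X)} = PQ|_{w^*(k(X))} = PQ|_{g^*(k(Y))} = Q$, using $f\circ w=g$; and symmetrically its red endpoint is $P$. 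So the endpoints are genuinely swapped. One should also note $\tilde w^* \in A^w \subseteq A$ by definition, so this all takes place inside $A^w$. Finally, transitivity of $A^w$ on oriented edges immediately yields transitivity of the larger group $A$.
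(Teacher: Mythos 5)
Your proposal is correct and follows the same route as the paper: use Corollary \ref{Corollary:Colored_edge_symmetric} for color-preserving transitivity, then observe that $\tilde{w}^*$ swaps $P$ and $Q$ (equivalently, reverses the orientation of the edge $PQ$), and combine. Your verification that the blue endpoint of $\tilde{w}^*\cdot PQ = PQ\circ w^*$ is $PQ\circ w^*\circ f^* = PQ\circ g^* = Q$ is exactly the computation the paper leaves implicit.
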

\begin{proof}
Corollary \ref{Corollary:Colored_edge_symmetric} says that $A^{PQ}$
acts transitively on $\mathcal{G}_{gen}$ subject to the constraint
that the colors of the vertices are preserved. The automorphism $\tilde{w}^{*}\in\text{Aut}(E_{\infty})$
swaps the points $P$ and $Q$. By conjugating we get that $A^{w}$
acts transitively on the edges of $\mathcal{G}_{gen}$, in the usual
sense of remembering the endpoints.
\end{proof}
\begin{cor}
Let $X\overset{f}{\leftarrow}Z\overset{g}{\rightarrow}X$ be a symmetric
correspondence of curves over $k$ without a core with symmetry $w$.
Suppose $Z$ is hyperbolic or $k\cong\mathbb{F}$. Then $A^{PQ}$
is an normal subgroup of index 2 inside of $A^{w}$.
\end{cor}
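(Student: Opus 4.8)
The plan is to exhibit $\tilde{w}^{*}$ as an element of $A^{w}$ that normalizes $A^{PQ}$, whose square lies in $A^{PQ}$, but which itself does not lie in $A^{PQ}$; these three facts together give $A^{w}=A^{PQ}\sqcup\tilde{w}^{*}A^{PQ}$ with $A^{PQ}\trianglelefteq A^{w}$.

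First I would identify $G_{P}$ and $G_{Q}$ with vertex stabilizers: an element $g\in A$ fixes the blue vertex $P$, viewed as the embedding $P:k(X)\hookrightarrow\Omega$, if and only if it fixes $P(k(X))$ pointwise, so $G_{P}=\mathrm{Stab}_{A}(P)$, and likewise $G_{Q}=\mathrm{Stab}_{A}(Q)$. In the proof of Corollary~\ref{Corollary:one_transitive} it is recorded that $\tilde{w}^{*}$ interchanges the vertices $P$ and $Q$ (this follows from $\tilde{w}^{*}\circ PQ=PQ\circ w^{*}$ together with $w^{*}\circ f^{*}=g^{*}$ and $w^{*}\circ g^{*}=f^{*}$). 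Hence conjugation by $\tilde{w}^{*}$ interchanges the stabilizers: $\tilde{w}^{*}G_{P}(\tilde{w}^{*})^{-1}=G_{Q}$ and $\tilde{w}^{*}G_{Q}(\tilde{w}^{*})^{-1}=G_{P}$. Since $A^{PQ}=\langle G_{P},G_{Q}\rangle$ and $A^{w}=\langle A^{PQ},\tilde{w}^{*}\rangle$, conjugation by a generating set of $A^{w}$ preserves $A^{PQ}$ setwise, so $A^{PQ}\trianglelefteq A^{w}$.

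Next I would bound the index from above. Because $w$ is an involution, $\tilde{w}^{2}$ is a lift of $\mathrm{id}_{Z}$, so $(\tilde{w}^{*})^{2}$ restricts to the identity on $PQ(k(Z))$; that is, $(\tilde{w}^{*})^{2}\in\mathrm{Gal}(E_{\infty}/PQ(k(Z)))$. Since $P(k(X))$ and $Q(k(Y))$ are both contained in $PQ(k(Z))$, this Galois group is contained in $G_{P}\cap G_{Q}\subset A^{PQ}$. Hence the image of $\tilde{w}^{*}$ in $A^{w}/A^{PQ}$ has order dividing $2$, and as it generates the quotient, $[A^{w}:A^{PQ}]\le 2$.

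The only point that is not pure formalism — and the step I would be most careful about — is that the index is exactly $2$, i.e.\ that $\tilde{w}^{*}\notin A^{PQ}$. For this I would invoke the coloring of $\mathcal{G}_{gen}$: by Corollary~\ref{Corollary:Colored_edge_symmetric} every element of $A^{PQ}$ preserves the partition of vertices into blue and red, whereas $\tilde{w}^{*}$ carries the blue vertex $P$ to the red vertex $Q$ and so does not. Therefore $\tilde{w}^{*}\notin A^{PQ}$, and combined with the upper bound this gives $[A^{w}:A^{PQ}]=2$; together with the normality established above, this completes the proof. Finally, the statement is automatically one of topological groups: $G_{P}$ is an open subgroup of $A^{w}$, hence so is the larger subgroup $A^{PQ}$, so $A^{PQ}$ is also closed in $A^{w}$ and $A^{w}/A^{PQ}$ is the discrete group of order $2$.
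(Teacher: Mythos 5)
Your proof is correct and follows the paper's argument exactly: conjugation by $\tilde{w}^{*}$ swaps $G_{P}$ and $G_{Q}$ (hence normalizes $A^{PQ}$), and $(\tilde{w}^{*})^{2}\in\mathrm{Gal}(E_{\infty}/PQ(k(Z)))\subset A^{PQ}$ bounds the index by $2$. Your extra step showing the index is exactly $2$ (via the fact that $A^{PQ}$ preserves the coloring of $\mathcal{G}_{gen}$ while $\tilde{w}^{*}$ swaps $P$ and $Q$) is left implicit in the paper's proof, but it is a worthwhile addition and is consistent with how the paper uses the coloring elsewhere, e.g.\ in Corollary \ref{corollary:max_compact}.
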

\begin{proof}
Conjugating by $\tilde{w}^{*}$ swaps $G_{P}$ and $G_{Q}$ and hence
stabilizes $A^{PQ}$. Therefore $A^{PQ}$ is normal inside of $A^{w}$.
Moreover, $(\tilde{w}^{*})^{2}\in A^{PQ}$, so $A^{w}/A^{PQ}$ is
of order 2.
\end{proof}
\begin{defn}
Let $(G,A)$ be a pair where $G$ is a connected graph and $A$ is
a group of automorphisms of $G$. $(G,A)$ is said to be (sharply)
$s$-transitive if $A$ acts (sharply) transitively on all $s$-arcs.
$(G,A)$ is said to be $\infty$-transitive if it is $s$-transitive
for all $s\geq1$.
\end{defn}
In this language, under the hypotheses of Corollary \ref{Corollary:one_transitive}
the pair $(\mathcal{G}_{gen},A)$ is 1-transitive.
\begin{thm}
\label{Theorem:Tutte}(Tutte) Let $G$ be a connected trivalent graph,
$A$ a group of automorphisms of $G$, and $s$ a positive integer.
If $(G,A)$ is $s$-transitive and not $s+1$-transitive, then $(G,A)$
is sharply $s$-transitive.
\end{thm}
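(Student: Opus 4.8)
The plan is to reduce the statement to the assertion that the stabilizer in $A$ of a single $s$-arc is trivial, and then to establish this by a propagation argument that exploits trivalence and connectedness. Throughout, recall that an $s$-arc is a non-backtracking walk $(v_0,\dots,v_s)$ of length $s$, and that ``sharply $s$-transitive'' means the action of $A$ on $s$-arcs is transitive with trivial stabilizers.

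First I would record an elementary reformulation: assuming $(G,A)$ is $s$-transitive, it is $(s+1)$-transitive if and only if the stabilizer $A_\alpha$ of one (equivalently, every) $s$-arc $\alpha=(v_0,\dots,v_s)$ acts transitively on the set of $(s+1)$-arcs extending $\alpha$. Since $G$ is trivalent and $s\geq 1$, this set has exactly two elements, obtained by appending one of the two neighbors of $v_s$ other than $v_{s-1}$; so ``$A_\alpha$ is not transitive on extensions'' is equivalent to ``$A_\alpha$ fixes both of these neighbors'', and hence (as $A_\alpha$ already fixes $v_{s-1}$) to ``$A_\alpha$ fixes every neighbor of $v_s$''. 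By $s$-transitivity all $s$-arcs are $A$-conjugate, so this property holds simultaneously for every $s$-arc or for none.

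Next I would assume $(G,A)$ is $s$-transitive but not $(s+1)$-transitive and suppose, for contradiction, that some $g\in A$ with $g\neq 1$ fixes the $s$-arc $\alpha=(v_0,\dots,v_s)$ pointwise. Call an $s$-arc \emph{green} if $g$ fixes it pointwise. Applying the observation above to $\alpha$ and to its reverse shows $g$ fixes every neighbor of $v_s$ and every neighbor of $v_0$. Moreover, if $\delta=(x_0,\dots,x_s)$ is green and $u$ is a neighbor of $x_s$ with $u\neq x_{s-1}$, then $g$ fixes $x_1,\dots,x_s,u$, so the shifted $s$-arc $(x_1,\dots,x_s,u)$ is again green; symmetrically one may shift to the left. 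Let $S$ be the set of vertices lying on some green $s$-arc; then $v_0,\dots,v_s\in S$. Any $x\in S$ can, by shifting, be taken to be an endpoint of a green $s$-arc, whence $g$ fixes every neighbor of $x$ and each such neighbor again lies on a (shifted) green $s$-arc, so $S$ is closed under taking neighbors. Since $G$ is connected, $S=V(G)$; but then $g$ fixes every vertex, so $g=1$, a contradiction. Hence $A_\alpha=1$, and combined with $s$-transitivity this says the action on $s$-arcs is sharply transitive.

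I expect the main obstacle to be the bookkeeping in the propagation step: one must check at every stage that the shifted sequences really are $s$-arcs (non-backtracking walks of length exactly $s$) and that the closure argument genuinely exhausts $V(G)$. This is precisely where trivalence (exactly two forward extensions of an $s$-arc, and always the existence of a neighbor distinct from the previous vertex) and connectedness enter, and it is essentially the content of Tutte's original argument; the rest is formal.
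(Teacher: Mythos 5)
Your argument is correct. Note that the paper does not actually prove this statement: it simply cites Tutte (7.72 of \emph{Connectivity in Graphs}) and Djokovi\'{c}--Miller, so what you have written is a self-contained rendering of essentially the classical argument behind those references. The two key points are exactly the ones you isolate: (i) under $s$-transitivity, $(s+1)$-transitivity is equivalent to the stabilizer of an $s$-arc acting transitively on its forward extensions, and in a trivalent graph there are exactly two such extensions, so a non-transitive action on them is the \emph{trivial} action --- this is the only place trivalence is used, and it is what converts ``not $(s+1)$-transitive'' into ``the stabilizer of an $s$-arc fixes every neighbor of its terminal vertex''; (ii) the propagation step, where shifting green $s$-arcs forward and backward (always possible since every vertex has degree $3\geq 2$) shows that the set of vertices fixed by a putative nontrivial stabilizer element is closed under passing to neighbors, hence is everything by connectedness. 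Both steps check out, including the use of the reversed arc to handle the initial endpoint and the fact that an automorphism of a (simple) graph fixing all vertices is the identity. The one thing worth making explicit if you write this up formally is the conjugacy remark in (i): since all $s$-arcs are $A$-conjugate, the ``stabilizer fixes all neighbors of the terminal vertex'' property transfers from the distinguished arc $\alpha$ to every green arc encountered during the propagation, which is what licenses each individual shift.
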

\begin{proof}
The proof is exactly the same as in 7.72 in Tutte's book Connectivity
in Graphs \cite{tutte1966connectivity}. Alternatively, see Djokovi\'{c}
and Miller \cite{djokovic1980regular}, Theorem 1, for exactly this
statement.
\end{proof}
\begin{lem}
\label{Lemma:free_33}Let $X\leftarrow Z\rightarrow X$ be a symmetric
type (3,3) correspondence of curves over $k$ without a core with
symmetry $w$. Suppose $Z$ is hyperbolic or $k\cong\mathbb{F}$.
Then the pair $(\mathcal{G}_{gen},A^{w})$ is $\infty$-transitive
and $\mathcal{G}_{gen}$ is a tree.
\end{lem}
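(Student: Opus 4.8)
The plan is to combine the $1$-transitivity already established in Corollary \ref{Corollary:one_transitive} with Tutte's theorem (Theorem \ref{Theorem:Tutte}) to force $s$-transitivity for all $s$, and then deduce treeness from a standard argument about $\infty$-transitive edge-symmetric graphs of bounded degree. The graph $\mathcal{G}_{gen}$ is trivalent since the correspondence has type $(3,3)$ (the degree of a blue vertex is $d=3$ and of a red vertex is $e=3$), and it is connected by construction, so Tutte's setup applies to the pair $(\mathcal{G}_{gen}, A^w)$.

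First I would argue that $(\mathcal{G}_{gen}, A^w)$ is $s$-transitive for every $s \geq 1$. Suppose not; let $s$ be maximal such that it is $s$-transitive (we know $s\geq 1$). Then by Tutte's theorem $(\mathcal{G}_{gen},A^w)$ is \emph{sharply} $s$-transitive, meaning the stabilizer of an $s$-arc in $A^w$ is trivial. The key point is that such a stabilizer cannot be trivial here: an $s$-arc spans only finitely many vertices and edges, hence determines only a finite subfield $E_H \subset E_\infty$ (with $H$ the finite subgraph traced out by the arc), whereas $E_\infty$ is an infinite extension of $PQ(k(Z))$ because the correspondence has no core (Proposition \ref{Proposition:no_core_infinite_graph} together with Corollary \ref{Corollary:two_defs_e_infinity}). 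More precisely, I would exhibit a nontrivial element of $\mathrm{Gal}(E_\infty / E_H)$ — using that $E_H$ lies in some finite layer $E_{KL\cdots K}$ of the tower by Proposition \ref{Proposition:finite_distance}, and that the tower genuinely grows — and check it lies in $A^w$ and fixes the $s$-arc pointwise. This contradicts sharpness, so no maximal $s$ exists and $(\mathcal{G}_{gen}, A^w)$ is $\infty$-transitive.

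To conclude that $\mathcal{G}_{gen}$ is a tree, I would suppose it contains a cycle and derive a contradiction with $\infty$-transitivity. If $\mathcal{G}_{gen}$ has a cycle, let $g$ be the length of a shortest cycle (the girth), which is finite. Pick a path of length $g$ that runs around this shortest cycle: its two endpoints coincide, so this is a closed $g$-arc whose first and last vertices are equal. Now also pick a path of length $g$ that is a subpath of a longer geodesic — such exists because the graph is infinite, connected, and $3$-valent, hence not a finite cycle — and whose endpoints are therefore \emph{distinct}. By $g$-transitivity there is an element of $A^w$ carrying one of these $g$-arcs to the other; but an automorphism sends a closed arc to a closed arc and an open one to an open one, a contradiction. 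Hence $\mathcal{G}_{gen}$ has no cycles, i.e. it is a tree.

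\textbf{The main obstacle} I expect is the middle step: rigorously producing a nontrivial element of $A^w$ that pointwise fixes a prescribed finite subgraph, i.e. showing the $s$-arc stabilizer in $A^w$ is genuinely nontrivial. This requires knowing that the iterated Galois tower $W_\infty$ does not stabilize after finitely many steps — which is exactly the content of ``no core'' via Lemma \ref{Lemma:No_Galois}/Corollary \ref{Corollary:Finite_No_Galois} and Proposition \ref{Proposition:no_core_infinite_graph} — and then that the relevant Galois automorphism can be chosen inside $A$ (indeed inside $A^{PQ} \subset A^w$), rather than in the possibly larger $\mathrm{Aut}_k(E_\infty)$. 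The point is that $G_P = \mathrm{Gal}(E_\infty/P(k(X)))$ and $G_Q = \mathrm{Gal}(E_\infty/Q(k(Y)))$ already lie in $A^{PQ}$ and act through $A^w$ on $\mathcal{G}_{gen}$, so picking an element of, say, $G_P$ that fixes the finite layer containing $E_H$ but acts nontrivially on a deeper layer gives what is needed; the bookkeeping of which arc such an element can be forced to fix is the delicate part. Everything else — trivalence, connectedness, the Tutte input, and the girth argument — is routine.
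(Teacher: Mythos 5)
Your proposal is correct and follows essentially the same route as the paper: Tutte's theorem upgrades non-$\infty$-transitivity to sharp $n$-transitivity, and the nontrivial group $\mathrm{Gal}(E_\infty/E_R)\subset G_P\subset A^{PQ}\subset A^w$ attached to an $n$-arc $R$ through $P$ fixes $R$ while acting faithfully, contradicting sharpness since ``no core'' forces $E_\infty/E_R$ to be infinite. Your closed-arc-versus-open-arc girth argument for deducing treeness from $\infty$-transitivity is just a more explicit version of the paper's opening reduction, so the two proofs coincide in substance.
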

\begin{proof}
Suppose $\mathcal{G}_{gen}$ had a cycle. The graph $\mathcal{G}_{gen}$
is infinite by Proposition \ref{Proposition:no_core_infinite_graph}.
Then the pair $(\mathcal{G}_{gen},A^{w})$ is 1-transitive, so there
exists a positive $n$ such that $(\mathcal{G}_{gen},A^{w})$ is $n$-transitive
but not $n+1$-transitive. Therefore, to prove $\mathcal{G}_{gen}$
is a tree it suffices to prove that the pair $(\mathcal{G}_{gen},A^{w})$
is $\infty$-transitive. 

Suppose $(\mathcal{G}_{gen},A^{w})$ was not $\infty$-transitive.
Then there exists a positive integer $n$ such that $(\mathcal{G}_{gen},A^{w})$
is $n$-transitive but not $n+1$-transitive because the graph is
infinite, connected and 1-transitive. Theorem \ref{Theorem:Tutte}
implies that the pair $(\mathcal{G}_{gen},A^{w})$ is then sharply
$n$-transitive, i.e. there exists a unique automorphism in $A^{w}$
sending any $n$-arc to any other $n$-arc. Therefore any automorphism
in $A^{w}$ that fixes any given $n$-arc must be the identity automorphism.
To any $n$-arc $R$ I can associate the field $E_{R}$ which is the
field generated by the images of the points and edges inside of $E_{\infty}$
as in Definition \ref{Definition:subfield_subgraph}. Pick the $n$-arc
$R$ through $P$ so that $E_{R}$ is a finite extension of $P(k(X))$.
Note that $E_{\infty}$ is Galois over $E_{R}$. The group $\text{Gal}(E_{\infty}/E_{R})$
acts faithfully on $\mathcal{G}_{gen}$ and fixes $R$. As $(\mathcal{G}_{gen},A^{w})$
is sharply $n$-transitive, the group $\text{Gal}(E_{\infty}/E_{R})$
acts trivially on $\mathcal{G}_{gen}$. Therefore $E_{R}=E_{\infty}$
is a finite extension $k(Z)$, Galois over both $k(X)$ and $k(Y)$,
which is a contradiction.
\end{proof}
Lemma \ref{Lemma:free_33} poses the following refinement to Question
\ref{Question:no_core_a_tree?} on whether or not $\mathcal{G}_{gen}$
is a tree.
\begin{question}
\label{Question:infty_transitive}Let $X\leftarrow Z\rightarrow X$
be a minimal, symmetric, étale correspondence of curves over $k$
without a core. Is the pair $(\mathcal{G}_{gen},A^{w})$ $\infty$-transitive?
\end{question}
We may use Question \ref{Question:infty_transitive} to pose a refinement
of Question \ref{Question:No_core_Shimura}
\begin{question}
Let $X\leftarrow Z\rightarrow X$ be a minimal, symmetric, étale correspondence
of projective curves over $k$ without a core. Does the pair $(\mathcal{G}_{gen},A^{w})$
\textquotedbl{}look like\textquotedbl{} the action of $SL_{2}$ over
a local field on its building? 
\end{question}

\section{\label{Section:Specialization_of_Graphs}Specialization of Graphs
and Special Orbits}

Given a correspondence $X\overset{f}{\leftarrow}Z\overset{g}{\rightarrow}Y$
over a field $k$, we have defined an undirected 2-colored graph $\mathcal{G}_{gen}^{full}$,
the full generic graph\emph{,} using an algebraically closed overfield
$\Omega$. In this section we define the $\mathcal{G}_{phys}^{full}$,
the\emph{ }full physical graph, which will be an undirected 2-colored
graph, using $\overline{k}$. The goal of this section is to speculate
on the behavior of ``specialization maps'' $s_{\tilde{z}}:\mathcal{G}_{gen}\rightarrow\mathcal{G}_{phys,z}$;
informally, if we think of $\mathcal{G}_{gen}$ as the ``graph of
generic dynamics'', this map specializes to the graph associated
to the dynamics of a physical point $z\in Z(\overline{k})$.
\begin{defn}
Given a correspondence $X\overset{f}{\leftarrow}Z\overset{g}{\rightarrow}Y$
of curves over $k$, the \emph{full physical graph }$\mathcal{G}_{phys}^{full}$
is the following 2-colored graph. The edges are the points $z\in Z(\overline{k})$,
the blue vertices are the points $X(\overline{k})$ and the red vertices
are the points $Y(\overline{k})$. Adjacent to $z:\text{Spec}(\overline{k})\rightarrow Z$
is the blue vertex $f\circ z\in X(\overline{k})$ and the red vertex
$g\circ z\in Y(\overline{k})$. Given a choice of $z\in Z(\overline{k})$,
we denote by the $\mathcal{G}_{phys,z}$ the connected component of
$\mathcal{G}_{phys}^{full}$ that contains $z$.
\end{defn}
Recall the construction of $\mathcal{G}_{gen}$: pick an edge $PQ\in Z(\Omega)$
of $\mathcal{G}_{gen}^{full}$ and define $\mathcal{G}_{gen}$ to
be the connected component of $\mathcal{G}_{gen}^{full}$ that contains
$PQ$, suppressing the implicit $PQ$ in the notation. The field $E_{\infty}\subset\Omega$
is the compositum of all of the points and edges of of $\mathcal{G}_{gen}$,
thought of as subfields of $\Omega$, by Corollary \ref{Corollary:two_defs_e_infinity}.
Therefore, an edge $e$ of $\mathcal{G}_{gen}$ yields an element
of the set $Z(E_{\infty})$. Similarly, a blue vertex $v$ of $\mathcal{G}_{gen}$
yields an element of $X(E_{\infty})$ and a red vertex $w$ yields
an element of $Y(E_{\infty})$.

We spell out exactly what is fixed in the construction of a specialization
map. First of all, assume the curves $X$, $Y$, and $Z$ are proper
over $k$: this is harmless as any correspondence of curves has a
canonical compactification. Pick $z\in Z(\overline{k})$. Then pick
a point $\tilde{z}\in W_{\infty}(\overline{k})$, a geometric point
of the scheme $W_{\infty}$, i.e. a compatible system of geometric
points on the tower defining $W_{\infty}$, lying over $z$. Taking
the image of $\tilde{z}$ gives closed point of the scheme $W_{\infty}$,
and the ring $\mathcal{O}_{W_{\infty},\tilde{z}}$ is a valuation
ring because it is the filtered colimit of valuation rings. Moreover,
the fraction field of $\mathcal{O}_{W_{\infty},\tilde{z}}$ is $E_{\infty}$.
The choice of $\tilde{z}:\text{Spec}(\overline{k})\rightarrow W_{\infty}$
yields a morphism $\pi:\mathcal{O}_{W_{\infty},\tilde{z}}\rightarrow\overline{k}$.
We now construct the specialization map 
\[
s_{\tilde{z}}:\mathcal{G}_{gen}\rightarrow\mathcal{G}_{phys}^{full}
\]
Let $e$ be an edge of $\mathcal{G}_{gen}$. As discussed above, $e$
yields an element of $Z(E_{\infty})$. We want to describe $s_{\tilde{z}}(e_{\infty})$,
the image of $e$, in $\mathcal{G}_{phys,z}$. We have the following
diagram; the dotted arrow exists uniquely because the structure map
$Z\rightarrow\text{Spec}(k)$ is proper. 
\[
\xymatrix{ & \text{Spec}(E_{\infty})\ar[rr]^{e}\ar[d] &  & Z\ar[d]\\
\text{Spec}(\overline{k})\ar[r]^{\tilde{z}} & \text{Spec}(\mathcal{O}_{W_{\infty},\tilde{z}})\ar[rr]\ar@{-->}[urr] &  & \text{Spec}(k)
}
\]
Composing $\tilde{z}$ with the dotted arrow, we get an element $\overline{e}\in Z(\overline{k})$.
We set $s_{\tilde{z}}(e)=\overline{e}$. The exact same construction
works with (red and blue) vertices, and the result is manifestly a
map of graphs. Moreover, as $\mathcal{G}_{gen}$ is connected, so
is the image.

Finally, we show that the edge $PQ\in Z(E_{\infty})$ is sent to $z$.
The inverse image of $\mathcal{O}_{W_{\infty},\tilde{z}}$ under the
map $PQ:k(Z)\hookrightarrow E_{\infty}$is the valuation ring $R$
of $k(Z)$ corresponding to $z$. Therefore, when $e=PQ$, the above
dotted arrow corresponds to the inclusion $R\hookrightarrow\mathcal{O}_{W_{\infty},\tilde{z}}$.
As $\tilde{z}$ lives over $z$, composing this inclusion with $\pi$
yields $s_{\tilde{z}}(PQ)=z$, as desired. Therefore, we have constructed
a map of graphs.
\[
s_{\tilde{z}}:\mathcal{G}_{gen}\rightarrow\mathcal{G}_{phys,z}
\]

\begin{lem}
\label{Lemma:etale_specialization}Let $X\overset{f}{\leftarrow}Z\overset{g}{\rightarrow}Y$
be an étale correspondence of projective hyperbolic curves without
a core over a field $k$. Then all of the specialization maps are
surjective. 
\[
s_{\tilde{z}}:\mathcal{G}_{gen}\rightarrow\mathcal{G}_{phys,z}
\]
\end{lem}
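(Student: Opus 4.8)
The plan is to reduce the surjectivity to a purely local statement at each vertex of $\mathcal{G}_{gen}$ and then propagate across the connected graph $\mathcal{G}_{phys,z}$. For an edge $e$ of $\mathcal{G}_{gen}$, viewed as a $k$-embedding $e\colon k(Z)\hookrightarrow E_{\infty}$, write $h_{e}\colon W_{\infty}\to Z$ for the induced morphism of proper $k$-schemes; similarly blue vertices give morphisms $W_{\infty}\to X$ and red vertices morphisms $W_{\infty}\to Y$. Unwinding the construction of $s_{\tilde z}$ given above (the dotted arrow there is nothing but the composite $\mathrm{Spec}\,\mathcal{O}_{W_{\infty},\tilde z}\to W_{\infty}\to Z$ through $h_{e}$) yields the clean formula $s_{\tilde z}(e)=h_{e}(\tilde z)\in Z(\overline{k})$, and likewise $s_{\tilde z}(v)=h_{v}(\tilde z)$ for vertices $v$. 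The local claim I would prove is: \emph{for every blue vertex $v$ of $\mathcal{G}_{gen}$, the map $s_{\tilde z}$ carries the $d=\deg f$ edges of $\mathcal{G}_{gen}$ incident to $v$ onto the $d$ edges of $\mathcal{G}_{phys,z}$ incident to $s_{\tilde z}(v)$}, that is, onto the fibre $f^{-1}(s_{\tilde z}(v))\subset Z(\overline{k})$, which has $d$ elements because $f$ is \'etale and $s_{\tilde z}(v)$ is a geometric point; symmetrically for red vertices. Granting this, since $s_{\tilde z}(PQ)=z$ was checked in the construction and every vertex of $\mathcal{G}_{phys,z}$ lies on some edge, an induction on the distance from $z$ in the connected graph $\mathcal{G}_{phys,z}$ gives surjectivity.

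To prove the local claim I would first record the \'etale input: because $f$ and $g$ are \'etale and a Galois closure of a finite \'etale morphism is again finite \'etale, an induction over the tower shows that $W_{\infty}\to X$, $W_{\infty}\to Y$, and $W_{\infty}\to Z$ are all pro-(finite \'etale) Galois covers --- the Galois property over $X$ and $Y$ being Corollary \ref{Corollary:E_infty_in_E_gen} and over $Z$ being built in. For such a cover the deck group acts transitively on every geometric fibre, by an inverse-limit-of-torsors argument over the finite Galois stages. Now fix a blue vertex $v$, pick any edge $\epsilon$ of $\mathcal{G}_{gen}$ incident to $v$, and recall that $G_{v}=\mathrm{Gal}(E_{\infty}/v(k(X)))$ acts transitively on the edges incident to $v$, so those edges are exactly the $\sigma\circ\epsilon$ with $\sigma\in G_{v}$. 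Translating the left action of $\sigma$ on embeddings into an automorphism $\sigma^{\dagger}$ of $W_{\infty}$ over $X$ gives $s_{\tilde z}(\sigma\circ\epsilon)=h_{\epsilon}(\sigma^{\dagger}(\tilde z))$. As $\sigma$ ranges over $G_{v}$, the $\sigma^{\dagger}$ exhaust the deck group of $h_{v}\colon W_{\infty}\to X$, so $\sigma^{\dagger}(\tilde z)$ ranges over the full geometric fibre $h_{v}^{-1}(x_{1})$, where $x_{1}=h_{v}(\tilde z)=s_{\tilde z}(v)$. Since $h_{v}=f\circ h_{\epsilon}$, this fibre is $h_{\epsilon}^{-1}(f^{-1}(x_{1}))$, and because $h_{\epsilon}$ is surjective on $\overline{k}$-points (it is the composite of the pro-(finite \'etale) cover $W_{\infty}\to Z$ with an automorphism of $W_{\infty}$, using that $A^{PQ}$ acts transitively on the edges of $\mathcal{G}_{gen}$ by Corollary \ref{Corollary:Colored_edge_symmetric}), we conclude $\{\,s_{\tilde z}(\sigma\circ\epsilon):\sigma\in G_{v}\,\}=f^{-1}(x_{1})$, which is precisely the set of edges of $\mathcal{G}_{phys,z}$ incident to $x_{1}$. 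The red case is identical with $G_{Q}$ in place of $G_{P}$.

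The main obstacle is the bookkeeping of the middle paragraph: passing from the Galois action on $E_{\infty}$ (which acts on $\mathcal{G}_{gen}$ by postcomposition of embeddings) to a genuine action on the integral model $W_{\infty}$ and its geometric fibres, and --- crucially --- verifying that the $G_{v}$-orbit of $\tilde z$ is the \emph{entire} fibre $h_{v}^{-1}(x_{1})$ rather than a proper subset. This is exactly where \'etaleness is indispensable, via the implication ``pro-(finite \'etale) Galois cover $\Rightarrow$ the deck group acts transitively (indeed as a torsor) on geometric fibres''; without it the argument collapses, consistent with the failure already flagged in the remark after Lemma \ref{Lem:hyperbolic_specialization}. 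By contrast, the reduction to the local claim, the identity $s_{\tilde z}(e)=h_{e}(\tilde z)$, and the surjectivity of $h_{\epsilon}$ are all essentially formal.
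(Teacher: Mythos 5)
Your argument is correct, but it reaches the conclusion by a different route than the paper. Both proofs reduce surjectivity to a statement about the star of each vertex and then propagate along the connected graph $\mathcal{G}_{phys,z}$ starting from $s_{\tilde{z}}(PQ)=z$; the difference is in which half of ``simply transitively'' each one exploits. You prove local \emph{surjectivity} directly: the edges at a blue vertex $v$ form a single $G_{v}$-orbit, the identity $s_{\tilde{z}}(\sigma\circ\epsilon)=h_{\epsilon}(\sigma^{\dagger}(\tilde{z}))$ converts this orbit into the $G_{v}$-orbit of $\tilde{z}$ in the geometric fibre of the pro-(finite \'etale) Galois cover $h_{v}:W_{\infty}\rightarrow X$, and transitivity of the deck group on that fibre (an inverse limit of torsors, nonempty cosets by compactness) pushes forward under the surjection $h_{\epsilon}$ to all of $f^{-1}(s_{\tilde{z}}(v))$. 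The paper instead proves local \emph{injectivity} --- no two edges sharing a vertex are identified --- and then uses the degree count ($d$ edges at a blue vertex in both graphs, which is where \'etaleness enters for the paper) to upgrade injectivity to bijectivity. Its technical engine is also different: rather than working with the profinite deck group, it uses Proposition \ref{Proposition:finite_distance} to descend two adjacent edges $A,B$ to a single irreducible curve $C$ at a finite level of the tower, with finite \'etale maps $a,b:C\rightarrow Z$ satisfying $f\circ a=f\circ b$, and invokes the rigidity of finite \'etale morphisms over an irreducible base (agreement at one point forces $a=b$). Your approach buys a cleaner conceptual statement (surjectivity with no counting) and avoids the descent to finite level, at the cost of having to justify the torsor structure of geometric fibres of the pro-cover; the paper's approach works entirely at finite level and, as a bonus, its injectivity statement is exactly what makes $s_{\tilde{z}}$ a covering map of graphs, which is how the result is advertised in the introduction. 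Combining your local surjectivity with the same degree count recovers that refinement as well, so nothing is lost.
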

\begin{proof}
Because the correspondence is étale, each blue vertex of $\mathcal{G}_{phys}$
is adjacent to $d=\text{deg}(f)$ edges and each red vertex is adjacent
to $e=\text{deg}(g)$ edges. It is therefore equivalent to show that
no two adjacent edges of $\mathcal{G}_{gen}$ are sent to the same
edge in $\mathcal{G}_{phys,z}$. Let $A$ and $B$ be two edges sharing
the blue vertex $p$. We want to show that $A$ and \textbf{$B$ }are
not sent to the same edge in $\mathcal{G}_{phys,z}$.

Recall that $A$ and $B$ yield elements of $Z(E_{\infty})$ such
that $f\circ A=f\circ B=p\in X(E_{\infty})$. Proposition \ref{Proposition:finite_distance}
implies that, after possibly enlarging $k$, there exists an irreducible
curve $C$ together with maps $\rho:\text{Spec}(E_{\infty})\rightarrow C$,
$\pi:C\rightarrow Z$, and $a,b:C\rightarrow Z$ such that 

\begin{itemize}
\item $\pi\circ\rho=PQ$ considered as elements of $Z(E_{\infty})$
\item $A$ and $B$ factor through $C$ via $a$ and $b$.
\end{itemize}
In the language of Proposition \ref{Proposition:finite_distance},
$C$ is the curve associated to a field $F$ of transcendence degree
1 over $k$, finite over $PQ(k(Z))$, that contains $A(k(Z))$ and
$B(k(Z))$. More explicitly, we have the following factorizations:
\[
\xymatrix{ & \text{Spec}(E_{\infty})\ar[d]^{\rho}\ar@/^{.75pc}/[ddr]^{PQ}\ar@/_{.75pc}/[ddl]_{A} &  &  &  &  & \text{Spec}(E_{\infty})\ar[d]^{\rho}\ar@/^{.75pc}/[ddr]^{PQ}\ar@/_{.75pc}/[ddl]_{B}\\
 & C\ar[dr]^{\pi}\ar[dl]_{a} &  &  &  &  & C\ar[dr]^{\pi}\ar[dl]_{b}\\
Z &  & Z &  &  & Z &  & Z
}
\]
Moreover, the maps $\pi$, $a$, and $b$ are all finite étale. Let
us follow the specialization construction. Again, the dotted arrow
exists because $C\rightarrow\text{Spec}(k)$ is proper. 
\[
\xymatrix{ & \text{Spec}(E_{\infty})\ar[rr]^{^{\rho}}\ar[d] &  & C\ar[d]\\
\text{Spec}(\overline{k})\ar[r] & \text{Spec}(\mathcal{O}_{W_{\infty},\tilde{z}})\ar[rr]\ar@{-->}[urr] &  & \text{Spec}(k)
}
\]
This diagram gives us a point $x\in C(\overline{k})$ by composition
with the dotted arrow. If $A$ and $B$ are identified under the specialization
map, $a(x)=b(x)\in Z(\overline{k})$. Now, $f\circ a=f\circ b$ because
$A$ and $B$ shared the vertex $p$, so we have the following diagram.
\[
\xymatrix{C\ar@/_{.5pc}/[rr]_{b}\ar@/^{.5pc}/[rr]^{a}\ar[dr] &  & Z\ar[dl]^{f}\\
 & X
}
\]
But $C$ is irreducible and the maps $a$, $b$, and $f$ are finite
étale, so the assumption that $a(x)=b(x)$ implies that $a=b$ and
hence $A=B$, as desired.

\end{proof}
\begin{rem}
\label{Remark:W_infty_and_G_phys}The graph $\mathcal{G}_{phys}$
helps describe the tower $W_{\infty}$. In this remark, we suppose
all morphisms are unramified at all points specified. For instance,
let $\xi_{Y}\in W_{Y}(\overline{k})$ map to $z\in Z(\overline{k})$
which maps to $y\in Y(\overline{k})$. Then, as in Remark \ref{Remark:Many maps from W_Y to Z},
there are naturally $\text{Aut}(W_{Y}/Z)$ many maps from $W_{Y}$
to $Z$ and we can look at the images of $\xi_{Y}$ under these maps.
In this way, $\xi_{Y}$ yields the graph of all edges coming out of
$y$ in $\mathcal{G}_{phys,z}$. More generally, a point $\xi_{YX\dots Y}\in W_{YX\dots Y}(\overline{k})$
which maps to $y\in Y(\overline{k})$ under the natural map yields
the subgraph of $\mathcal{G}_{phys,z}$ with center $y$ and radius
$n$, where $n$ is the number of letters in the string ``$YX\dots Y$''.

We will use this Remark in Proposition \ref{Proposition:if_clump_then_bounded_field_of_constants}
to show that if a \emph{étale clump }exists, then Question \ref{Question:Field_of_Constants}
has an affirmative answer.
\end{rem}
Consider the Hecke correspondence of open modular curves over $\mathbb{F}_{p}$
\[
\xymatrix{ & Y_{0}(l)\ar[dl]\ar[dr]\\
Y(1) &  & Y(1)
}
\]
The graph $\mathcal{G}_{gen}$ is a tree. For $z\in Y_{0}(l)(\mathbb{F})$
an ordinary point, $\mathcal{G}_{phys,z}$ has at most one cycle\emph{.
}This follows from the work in David Kohel's thesis \cite{kohel1996endomorphism},
summarized by Andrew Sutherland \cite{sutherland2013isogeny}. They
call this structure an \emph{Isogeny Volcano. }The cycle comes from
the following fact: given an imaginary quadratic field $K/\mathbb{Q}$,
there exists an elliptic curve $E/\mathbb{F}$ with multiplication
by the maximal order $\mathcal{O}_{K}$. On the other hand, there
are only finitely many supersingular points, and in fact Theorem \ref{Theorem:one_clump}
implies that if $\mathcal{G}_{phys,z}$ contains one supersingular
point it contains all of them.
\begin{defn}
Given an étale correspondence of projective hyperbolic curves 
\[
\xymatrix{ & Z\ar[dl]\ar[dr]\\
X &  & Y
}
\]
over $k$ without a core, we say a point $z\in Z(\overline{k})$ is
\emph{special} if there exists (equivalently for all) $\tilde{z}\in W_{\infty}(\overline{k})$
over $z$ such that the map $s_{\tilde{z}}:\mathcal{G}_{gen}\rightarrow\mathcal{G}_{phys,z}$
is not an isomorphism. We say $z\in Z(\overline{k})$ is \emph{generic}
if the it is not special.
\end{defn}
\begin{question}
\label{Question:Special_orbit}Let $X\leftarrow Z\rightarrow Y$ be
an étale correspondence of projective curves over $\mathbb{F}_{q}$
without a core . 

\begin{enumerate}
\item Is there always $z\in Z(\mathbb{F})$ that is generic?
\item Is there always a special point with unbounded orbit?
\item Suppose $\mathcal{G}_{gen}$ is free. For every point $z\in Z(\mathbb{F})$,
is $\pi_{1}(\mathcal{G}_{phys,z})$ finitely generated? If $\mathcal{G}_{phys,z}$
is infinite, does $\mathcal{G}_{phys,z}$ have one cycle?
\item What is $\lim_{n\rightarrow\infty}\frac{|z\in Z(\mathbb{F}_{q^{n}})\text{ with }z\text{ generic}|}{|Z(\mathbb{F}_{q^{n}})|}$?
\end{enumerate}
\end{question}

\section{\label{Section:Invariant_Line_Bundles}Invariant Line Bundles and
Invariant Sections}

In this section we will need somewhat refined information about abelian
varieties and finite group schemes over a field $k$. Our main reference
is van der Geer and Moonen \cite{van2007abelian}. 
\begin{defn}
Let $X\overset{f}{\leftarrow}Z\overset{g}{\rightarrow}Y$ be a correspondence
of curves over $k$. An \emph{invariant line bundle} $\mathscr{L}$
on the correspondence is a triple $(\mathscr{L}_{X},\mathscr{L}_{Y},\phi)$
where $\mathscr{L}_{X}$ is a line bundle on $X$, $\mathscr{L}_{Y}$
is a line bundle on $Y$, and $\phi:f^{*}\mathscr{L}_{X}\rightarrow g^{*}\mathscr{L}_{Y}$
is an isomorphism of line bundles on $Z$. The \emph{degree} of an
invariant line bundle $\mathscr{L}$ on a correspondence of projective
curves is $\text{deg}(f^{*}\mathscr{L}_{X})=\text{deg}(g^{*}\mathscr{L}_{Y})$
on $Z$. An isomorphism of invariant line bundles $i:\mathscr{L}\rightarrow\mathscr{L}'$
is a pair of isomorphisms $i_{X}:\mathscr{L}_{X}\rightarrow\mathscr{L}'_{X}$
and $i_{Y}:\mathscr{L}_{Y}\rightarrow\mathscr{L}'_{Y}$ that intertwine
$\phi$ and $\phi'$ when pulled back to $Z$. 

The cohomology of an invariant line bundle $\mathscr{L}$ is defined
as follows. 
\[
H^{i}(\mathscr{L}):=\{(\xi_{X},\xi_{Y})\in H^{i}(X,\mathscr{L}_{X})\oplus H^{i}(Y,\mathscr{L}_{Y})|f^{*}(\xi_{X})=\phi^{*}g^{*}(\xi_{Y})\in H^{i}(Z,f^{*}\mathscr{L}_{X})\}
\]
The group $H^{i}(\mathscr{L})$ is naturally a finite dimensional
$k$ vector space, and we let $h^{i}(\mathscr{L})=\text{dim}_{k}H^{i}(\mathscr{L})$.
We call elements of $H^{0}(\mathscr{L})$ \emph{invariant sections}.
When it is especially clear from context, we omit the prefix ``invariant''.
\end{defn}
In general, $\mathcal{O}=(\mathcal{O}_{X},\mathcal{O}_{Y},1)$ is
an invariant line bundle. Note that if the correspondence is étale,
there is a natural invariant line bundle: $\Omega=(\Omega_{X}^{1},\Omega_{Y}^{1},\phi)$;
here $\phi$ is the composition of the canonical isomorphism $f^{*}\Omega_{X}^{1}\rightarrow\Omega_{Z}^{1}$
and the inverse of the canonical isomorphism $g^{*}\Omega_{Y}^{1}\rightarrow\Omega_{Z}^{1}$.
We call elements of $H^{0}(\Omega)$ \emph{invariant differential
1-forms}. Let $\mathscr{T}$ denote the dual of $\Omega$. Then the
first-order deformation space of an étale correspondence of projective
curves is $H^{1}(\mathscr{T})$.
\begin{prop}
\label{Proposition:at_most_one_section}Let $X\overset{f}{\leftarrow}Z\overset{g}{\rightarrow}Y$
be a correspondence of curves over $k$ without a core. Let $\mathscr{L}$
be an invariant line bundle on the correspondence. Then $h^{0}(\mathscr{L})\leq1$.
\end{prop}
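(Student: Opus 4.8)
The plan is to argue by contradiction: suppose $\mathscr{L}$ is an invariant line bundle with $h^0(\mathscr{L}) \geq 2$, and derive from this an algebraic relation forced to live in $k(X) \cap k(Y)$ of positive transcendence degree, contradicting the no-core hypothesis. The two invariant sections $s, t \in H^0(\mathscr{L})$ are each given by a pair $(\xi_X, \xi_Y)$ of honest sections on $X$ and on $Y$ that agree after pulling back to $Z$ via $\phi$. Since $h^0(\mathscr{L}) \geq 2$, the two sections $s$ and $t$ are linearly independent over $k$, so the ratio $s/t$ is a nonconstant rational function. The key observation is that this ratio is computed compatibly on all three curves: writing $s = (s_X, s_Y)$ and $t = (t_X, t_Y)$, we get $s_X/t_X \in k(X)$, $s_Y/t_Y \in k(Y)$, and these two rational functions, when pulled back to $k(Z)$, both equal $(f^*s_X)/(f^*t_X) = \phi^{-1}(g^*s_Y)/\phi^{-1}(g^*t_Y)$ — the $\phi$-factors cancel in the ratio. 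Hence $f^*(s_X/t_X) = g^*(s_Y/t_Y)$ in $k(Z)$.

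First I would make precise the claim that $s/t$ is nonconstant: if $s_X/t_X$ were a constant $c \in k$, then $s_X = c\,t_X$, and pulling back and using $\phi$ this forces $s_Y = c\,t_Y$ as well (because $f^*$ and $g^*$ are injective on rational functions and $\phi$ is an isomorphism), so $s = ct$ in $H^0(\mathscr{L})$, contradicting linear independence. So $h := s_X/t_X$ is a nonconstant element of $k(X)$. Then $f^*h \in k(Z)$ lies in the image of $g^*$, namely $f^*h = g^*(s_Y/t_Y)$; identifying $k(X)$ and $k(Y)$ with their images in $k(Z)$, this says $h \in k(X) \cap k(Y)$. Since $h$ is nonconstant, it is transcendental over $k$, so $k(X) \cap k(Y)$ has transcendence degree at least $1$ over $k$ — that is, the correspondence has a core, contradiction.

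There is one technical point to handle carefully: the section $t$ (say $t_X$) may have zeros, so $s_X/t_X$ is only a rational function, not a regular one, and I should make sure the ratio is well-defined as an element of the function field $k(X)$ rather than worrying about it globally — this is fine since $t_X \neq 0$ as a section of a line bundle on the integral curve $X$, so $t_X$ is a nonzerodivisor and the ratio makes sense in $k(X)$. Similarly the compatibility $f^*s_X = \phi^{-1}g^*s_Y$ (and likewise for $t$) holds as an equality of sections of $f^*\mathscr{L}_X$ on $Z$, hence descends to an equality of the induced rational functions. I expect this bookkeeping about rational versus regular sections to be the only real friction; the conceptual content — that an invariant line bundle with two sections produces an invariant nonconstant rational function, i.e. a core — is immediate. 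No deep input is needed beyond the definitions and the fact that morphisms of curves induce injections of function fields.
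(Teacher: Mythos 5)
Your proposal is correct and matches the paper's own argument: the paper likewise takes the ratio of two linearly independent invariant sections to obtain a common nonconstant map to $\mathbb{P}^{1}$ from $X$ and $Y$ compatible with $f$ and $g$, which is exactly a core; your function-field phrasing and the check that the ratio is genuinely nonconstant are just the details the paper leaves implicit.
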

\begin{proof}
If there were two linearly independent sections $s=(s_{X},s_{Y})$
and $t=(t_{X},t_{Y})$, then by taking their ratio we get a map to
$\mathbb{P}^{1}$.
\[
\xymatrix{ & Z\ar[dl]_{f}\ar[dr]^{g}\\
X\ar[dr]^{\frac{s_{X}}{t_{X}}} &  & Y\ar[dl]_{\frac{s_{Y}}{t_{Y}}}\\
 & \mathbb{P}^{1}
}
\]
Hence there is a core.
\end{proof}
\begin{question}
Let $X\leftarrow Z\rightarrow Y$ be an étale correspondence of projective
curves over $k$ without a core, where $\text{char}(k)=p$. Suppose
$X$ has genus $g$. What is the maximal dimension of $h^{1}(\mathscr{T})$
in terms of $g$?
\end{question}
\begin{rem}
As noted in Remark \ref{Remark:no_defs}, in characteristic 0 étale
correspondences without a core do not deform. However, Example \ref{Example:Central_Leaf}
shows that in characteristic $p$ they may deform .
\end{rem}
We will see that, in characteristic 0, there are no invariant sections
of any non-trivial invariant line bundle on an étale correspondence
without a core. However, they can exist in characteristic $p$. To
better understand this, we briefly review the \emph{cyclic cover trick}
for smooth curves. Let $T$ be a smooth curve over $k$, let $\mathscr{L}_{T}$
be a line bundle on $T$. Suppose $s\in H^{0}(T,\mathscr{L}_{T}^{d})$
for some $d\in\mathbb{N}$ with $(d,\text{char}k)=1$, such that $s$
is not the power of a section of a smaller power of $\mathscr{L}_{T}$.
Let $\mathscr{A}_{s}$ denote the following sheaf of algebras 
\[
\mathscr{A}_{s}=\mathcal{O}_{T}\oplus\mathscr{L}_{T}^{-1}\oplus\dots\mathscr{L}_{T}^{-(d-1)}
\]
with multiplication given by the naive multiplication when possible
and contraction with $s$ when necessary. The condition that $s$
is not the power of a section implies that $\mathscr{A}_{s}$ is an
irreducible sheaf of algebras. We let $T(s^{\frac{1}{d}})\rightarrow T$,
the \emph{$d$\textsuperscript{\emph{th}}-cyclic cover of $T$ by
$s$}, be the normalization of $\text{Spec}_{T}\mathscr{A}_{s}$ equipped
with the natural map to $T$. Then $T(s^{\frac{1}{d}})$ is a smooth
curve over $k$. Then the pullback of $\mathscr{L}_{T}$ to $T(s^{\frac{1}{d}})$
has a (non-canonical) section, $s^{\frac{1}{d}}$, whose $d^{\text{th}}$
power is $s$.

We remark that, by construction, the \emph{$d$\textsuperscript{th}}
cyclic cover of $(T,s)$ is functorial. In particular, let $\mathscr{L}$
be an invariant line bundle on $X\leftarrow Z\rightarrow Y$ with
$s\in H^{0}(\mathscr{L}^{d})$ an invariant section, and suppose that
$s$ is not the power of any invariant section of a smaller power
of $\mathscr{L}$. Then we may perform the cyclic cover trick to $(X\leftarrow Z\rightarrow Y,s)$
to obtain
\[
\xymatrix{ & Z(s^{\frac{1}{d}})\ar[dl]\ar[dr]\\
X(s^{\frac{1}{d}}) &  & Y(s^{\frac{1}{d}})
}
\]
The pullback of $\mathscr{L}$ to $X(s^{\frac{1}{d}})\rightarrow Z(s^{\frac{1}{d}})\rightarrow Y(s^{\frac{1}{d}})$
has a (non-canonical) invariant section, which we denote by $s^{\frac{1}{d}}$.
\begin{example}
\label{Example:igusa_form}Consider a Hecke correspondence of (open)
modular curves over $\mathbb{F}$. Then $\Omega^{p-1}$ is an invariant
line bundle and has an invariant section $H$: the Hasse invariant.
Recall that the divisor of $H$ is the supersingular locus. The Hasse
invariant similarly exists on a Hecke correspondence of moduli spaces
of fake elliptic curves. Therefore, there are examples of invariant
sections of non-trivial invariant line bundles on étale correspondences
of projective curves over $\mathbb{F}$ without a core.

In these cases, the \textquotedbl{}Igusa level structure\textquotedbl{}
construction of Remark \ref{Remark:Igusa} is precisely the \emph{$(p-1)$\textsuperscript{st}}cyclic
cover construction associated to the invariant section $H$ of $\Omega^{p-1}$.
In particular, the induced correspondences of Igusa curves have an
invariant differential form: $H^{\frac{1}{p-1}}$. See Ullmer \cite{ulmer1990universal}
for a brief introduction to the Hasse invariant and the Igusa construction
and Chapter 1 of Katz \cite{katz1973p} for a more thorough explication
of modular forms.
\end{example}
Given a correspondence of projective curves, $X\leftarrow Z\rightarrow Y$,
there are induced maps $f^{*}:Pic(X)\rightarrow Pic(Z)$ and $g^{*}:Pic(Y)\rightarrow Pic(Z)$
between the Picard schemes; both of these maps have finite (though
not-necessarily reduced) kernel. Restricting, there are induced maps
$f^{*}:Pic^{0}(X)\rightarrow Pic^{0}(Z)$ and $g^{*}:Pic^{0}(Y)\rightarrow Pic^{0}(Z)$.
We denote by $f^{*}Pic^{0}(X)\cap g^{*}Pic^{0}(Y)$ the scheme-theoretic
intersection of the image of these two maps in $Pic^{0}(Z)$; note
that this group scheme need not be reduced in positive characteristic.
\begin{defn}
Let $X\overset{f}{\leftarrow}Z\overset{g}{\rightarrow}Y$ be a correspondence
of projective curves over $k$. The \emph{Picard scheme of $X\leftarrow Z\rightarrow Y$}
is the closed subgroup scheme of $Pix(X)\times Pic(Y)$ given by
\[
Pic(X\leftarrow Z\rightarrow Y):=\text{ker}(Pic(X)\times Pic(Y)\overset{f^{*}-g^{*}}{\rightarrow}Pic(Z))
\]
 Similarly, $Pic^{0}(X\leftarrow Z\rightarrow Y):=\text{ker}(Pic^{0}(X)\times Pic^{0}(Y)\overset{f^{*}-g^{*}}{\rightarrow}Pic^{0}(Z))$.
\end{defn}
\begin{rem}
The scheme $Pic^{0}(X\leftarrow Z\rightarrow Y)$ need not be reduced
in positive characteristic. As usual, if $Z$ has a $k$-rational
point, then $Pic(X\leftarrow Z\rightarrow Y)(k)$ is isomorphic the
group of isomorphism classes of invariant line bundles on $X\leftarrow Z\rightarrow Y$.
Finally, $Pic(X\leftarrow Z\rightarrow Y)/Pic^{0}(X\leftarrow Z\rightarrow Y)\hookrightarrow\mathbb{Z}$
via the degree map on $Z$.
\end{rem}
We note that $Pic(X\leftarrow Z\rightarrow Y)\rightarrow Pic(X)$
and $Pic(X\leftarrow Z\rightarrow Y)\rightarrow Pic(Y)$ both have
finite kernels. Moreover, 
\[
Pic^{0}(X\leftarrow Z\rightarrow Y)\rightarrow f^{*}Pic^{0}(X)\cap g^{*}Pic^{0}(Y)\subset Pic^{0}(Z)
\]
has finite kernel. The following theorem will be very useful for us.
\begin{thm}
\label{Theorem:closed_reduced_AV}Let $A$ be an abelian variety over
a field $k$ and let $G\hookrightarrow A$ be a closed subgroup scheme.
Then the connected reduced group subscheme $G_{red}^{0}\hookrightarrow A$
is an abelian subvariety.
\end{thm}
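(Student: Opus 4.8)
The plan is to show that $G_{red}^{0}$ is a smooth, geometrically connected, proper group subscheme of $A$, and then to invoke the standard fact that such a group scheme is necessarily an abelian variety. First I would dispose of the group-scheme structure: over a perfect field $G_{red}$ is automatically a closed subgroup scheme of $A$, since the identity section $\mathrm{Spec}(k)\to A$ factors through $G_{red}$ and the inversion map $G\to G$ carries the reduced subscheme into itself by the universal property of reduction, while the product $G_{red}\times_{k}G_{red}$ is again reduced over a perfect field, so the multiplication $G\times_{k}G\to G$ restricts to $G_{red}\times_{k}G_{red}\to G_{red}$ for the same reason. The connected component of the identity $G_{red}^{0}$ is then an open and closed subgroup scheme, still reduced, and geometrically connected because the identity component of any group scheme locally of finite type over a field is geometrically connected.

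Next I would establish smoothness. A reduced group scheme of finite type over a perfect field is geometrically reduced, hence generically smooth, and then smooth everywhere by translating along the group law; so $G_{red}^{0}$ is smooth over $k$. Properness is immediate: $G_{red}^{0}$ is a closed subscheme of $A$, which is proper over $k$, and a closed immersion is proper. Therefore $G_{red}^{0}$ is a smooth, geometrically connected, proper group scheme over $k$, which is by definition (or via the rigidity lemma, which additionally forces commutativity, though that is not needed here) an abelian variety; being a closed subscheme of $A$, it is an abelian subvariety.

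The one step requiring care — and the place I expect to spend the most effort — is the passage from a perfect to a general base field, since over an imperfect field $G_{red}$ need not be a subgroup scheme and formation of $G_{red}$ does not commute with inseparable base change. I would handle this by base changing to the perfect closure $k^{\mathrm{perf}}$ (or to a finite purely inseparable extension over which $G^{0}_{red}$ becomes geometrically reduced) and descending; in all the applications in this paper the ground field is perfect anyway — it is $\mathbb{F}$, a finite field, or of characteristic $0$, where by Cartier's theorem every group scheme of finite type is already smooth — so in practice it suffices to run the argument above over a perfect base. Everything other than this reduction is formal.
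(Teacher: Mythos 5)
The paper offers no argument here: it simply cites Proposition 5.31 of van der Geer--Moonen. Your proposal is the standard proof of that proposition, and over a perfect field it is complete and correct: $G_{red}$ is a subgroup scheme because $G_{red}\times_k G_{red}$ is reduced, $G^0_{red}$ is geometrically connected because it has a rational point (the identity), geometrically reduced hence generically smooth hence smooth by translation, and proper as a closed subscheme of $A$; a smooth, connected, proper group scheme over a field is an abelian variety.

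The one place where you have a real gap is exactly the place you flagged, and ``base change to $k^{\mathrm{perf}}$ and descend'' is not yet an argument: a purely inseparable extension has trivial Galois group, so there is no Galois descent datum to invoke, and fppf descent of a closed subscheme requires data that is not automatic. The standard way to close this is: over $\overline{k}$ the subvariety $B=(G_{\overline{k}})^0_{red}$ is an abelian subvariety of $A_{\overline{k}}$, hence is the Zariski closure of its prime-to-$p$ torsion points; these lie in $A(k^{sep})$, so $B$ is defined over $k^{sep}$; it is also defined over a finite purely inseparable extension of $k$; since the minimal field of definition sits inside $k^{sep}\cap k^{\mathrm{perf}}=k$, the subvariety $B$ descends to a geometrically reduced closed subgroup scheme $B_0\subset A$ over $k$, and one checks that $B_0$ and $G^0_{red}$ are reduced closed subschemes of $A$ with the same underlying space, hence equal. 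Your fallback that ``in all applications the ground field is perfect'' is not literally true as stated in the paper (Lemma \ref{Lemma:No_abelian_subvariety} and Theorem \ref{Theorem:one_clump} are asserted over an arbitrary field $k$), but those applications do reduce to the algebraically closed case via Proposition \ref{Proposition:having_a_core_invariant_under_extension}, so the perfect-field version of your argument would suffice for the paper's purposes even if you did not complete the descent.
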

\begin{proof}
This is Proposition 5.31 in \cite{van2007abelian}.
\end{proof}
\begin{lem}
\label{Lemma:No_abelian_subvariety}Let $X\overset{f}{\leftarrow}Z\overset{g}{\rightarrow}Y$
be a correspondence of projective curves over $k$ without a core.
Then $Pic(X\leftarrow Z\rightarrow Y)$ has no positive-dimensional
abelian subvarieties. In particular, $Pic^{0}(X\leftarrow Z\rightarrow Y)$
is a finite group scheme over $k$.
\end{lem}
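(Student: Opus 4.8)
The plan is to prove the contrapositive of the first assertion: if $Pic(X\leftarrow Z\rightarrow Y)$ contains a positive-dimensional abelian subvariety, then the correspondence has a core. Such a subvariety is connected, hence lies in $Pic^{0}(X\leftarrow Z\rightarrow Y)$, so it suffices to assume $A:=Pic^{0}(X\leftarrow Z\rightarrow Y)$ has positive dimension and to produce a core; by Proposition \ref{Proposition:having_a_core_invariant_under_extension} I may assume $k$ is algebraically closed. Write $J_{X}=Pic^{0}(X)$, $J_{Y}=Pic^{0}(Y)$, $J_{Z}=Pic^{0}(Z)$; let $f^{*}\colon J_{X}\to J_{Z}$ and $g^{*}\colon J_{Y}\to J_{Z}$ be the pullbacks, which have finite kernels since $f,g$ are non-constant, and let $f_{*}\colon J_{Z}\to J_{X}$, $g_{*}\colon J_{Z}\to J_{Y}$ be the norm maps, so that $f_{*}f^{*}=[\deg f]$ and $g_{*}g^{*}=[\deg g]$; in particular $f_{*},g_{*}$ are surjective. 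The homomorphism $A\to J_{Z}$, $(L_{X},L_{Y})\mapsto f^{*}L_{X}=g^{*}L_{Y}$, has finite kernel, so $\mathrm{im}(f^{*})\cap\mathrm{im}(g^{*})$ is a positive-dimensional abelian subvariety of $J_{Z}$; equivalently, the homomorphism $J_{X}\times J_{Y}\to J_{Z}$, $(a,b)\mapsto f^{*}a+g^{*}b$, has a positive-dimensional kernel.

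The crux is to upgrade this isogeny-theoretic statement to an honest common quotient curve. Recall that under the canonical principal polarizations of the Jacobians, $f_{*}$ is the dual of $f^{*}$ and $g_{*}$ is the dual of $g^{*}$. Dualizing the displayed map, the homomorphism $(f_{*},g_{*})\colon J_{Z}\to J_{X}\times J_{Y}$ is \emph{not} surjective. Put $N:=\ker(f_{*})+\ker(g_{*})$, a closed subgroup scheme of $J_{Z}$. A dimension count, using that $\ker(f_{*})^{0}$, $\ker(g_{*})^{0}$ and $(\ker(f_{*})^{0}\cap\ker(g_{*})^{0})^{0}=\ker(f_{*},g_{*})^{0}$ have dimensions $\dim J_{Z}-\dim J_{X}$, $\dim J_{Z}-\dim J_{Y}$ and $\dim J_{Z}-\dim\mathrm{im}(f_{*},g_{*})$ respectively, gives $\dim N=\dim J_{Z}-\dim J_{X}-\dim J_{Y}+\dim\mathrm{im}(f_{*},g_{*})<\dim J_{Z}$. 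Hence $Q:=J_{Z}/N$ is a \emph{non-zero} abelian variety.

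Fix a base point of $Z$ and let $h\colon Z\to Q$ be the composite of the resulting Abel--Jacobi map $Z\hookrightarrow J_{Z}$ with the quotient $\pi\colon J_{Z}\to Q$. The image of $Z$ generates $J_{Z}$ and $Q\neq0$, so $h$ is non-constant. If $z,z'\in Z$ satisfy $f(z)=f(z')$, then $f_{*}([z-z'])=[f(z)-f(z')]=0$ in $J_{X}$, so $[z-z']\in\ker(f_{*})\subseteq N=\ker\pi$ and therefore $h(z)=h(z')$; thus $h$ is constant on the fibres of $f$, and, by the same reasoning, on the fibres of $g$. Since $f$ and $g$ are generically separable, a rational function on $Z$ that is constant on the fibres of $f$ lies in $f^{*}k(X)$ (pass to a Galois closure of $k(Z)/k(X)$); applied to the pullbacks along $h$ of rational functions on the smooth projective model $D$ of $\overline{h(Z)}$, this gives $h^{*}k(D)\subseteq f^{*}k(X)$ and likewise $h^{*}k(D)\subseteq g^{*}k(Y)$. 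So $h^{*}k(D)$ is a subfield of $k(X)\cap k(Y)$ (inside $k(Z)$) of transcendence degree $1$ over $k$, i.e.\ the correspondence has a core, contradicting the hypothesis. This proves the first assertion.

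For the final sentence, $Pic^{0}(X\leftarrow Z\rightarrow Y)$ is a group scheme of finite type that is closed in the abelian variety $Pic^{0}(X)\times Pic^{0}(Y)$, so by Theorem \ref{Theorem:closed_reduced_AV} its connected reduced subscheme is an abelian subvariety, necessarily trivial by what was just proved; hence $Pic^{0}(X\leftarrow Z\rightarrow Y)$ is finite. The main obstacle is the one flagged in the second paragraph: extracting a genuine morphism of curves (the core) rather than a statement about Jacobians only up to isogeny. This forces the specific choice $Q=J_{Z}/(\ker f_{*}+\ker g_{*})$, the quotient killing the two ``Prym-type'' kernels, and the non-vanishing of $Q$ is exactly where the hypothesis $\dim Pic^{0}(X\leftarrow Z\rightarrow Y)>0$ is used, through the duality between pullback and norm on Jacobians.
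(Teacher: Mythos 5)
Your proof is correct and follows essentially the same route as the paper's: dualize the positive-dimensional subgroup of $Pic^{0}(X)\times Pic^{0}(Y)$ to obtain a nonzero common quotient abelian variety of the Jacobians (your explicit $Q=JZ/(\ker f_{*}+\ker g_{*})$ plays the role of the paper's unnamed $B$), then map the curves into it via Abel--Jacobi to manufacture a core. The only real difference is the finishing move: you argue via constancy on fibres plus a Galois-closure argument on function fields, whereas the paper observes directly that the images of $X$ and $Y$ in $B$ coincide and serve as the core.
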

\begin{proof}
We may suppose all of the curves have genus at least 1. It is equivalent
to prove that there is no abelian variety $A$ with finite maps fitting
into the following diagram
\[
\xymatrix{ & Pic^{0}(Z)\\
Pic^{0}(X)\ar[ur]^{f^{*}} &  & Pic^{0}(Y)\ar[ul]_{g^{*}}\\
 & A\ar[ul]\ar[ur]
}
\]

By dualizing, this is equivalent to showing that there is no abelian
variety $B$ with non-constant surjective maps fitting into the following
diagram
\[
\xymatrix{ & JZ\ar[dl]_{f_{*}}\ar[dr]^{g_{*}}\\
JX\ar[dr] &  & JY\ar[dl]\\
 & B
}
\]
(While $Pic^{0}(Z)$ is canonically principally polarized, we write
the dual as $JZ$ to remember the Albanese functoriality.) Suppose
such a $B$ fitting into the diagram existed. We will prove that the
correspondence has a core. Choose a point $z\in Z(k)$ (extend $k$
if necessary) and let $x=f(z)$, $y=g(z)$. Then we have Abel-Jacobi
maps which yield a morphism of \emph{correspondences}:
\[
\xymatrix{ & Z\ar[dl]\ar[dr]\ar[d]\\
X\ar[d] & JZ\ar[dl]\ar[dr] & Y\ar[d]\\
JX\ar[dr] &  & JY\ar[dl]\\
 & B
}
\]
i.e. the above diagram commutes. Moreover, under the Abel-Jacobi map,
$Z$ spans $JZ$ as a group and likewise with $X$ and $Y$. Therefore
the induced maps $X\rightarrow B$ and $Y\rightarrow B$ are non-constant.
In particular, their image in $B$ is a curve; therefore $X\leftarrow Z\rightarrow Y$
has a core.

We now prove that $Pic^{0}(X\leftarrow Z\rightarrow Y)$ is finite.
If $Pic^{0}(X\leftarrow Z\rightarrow Y)$ were not finite, then it
would be a positive-dimensional group subscheme of $Pic^{0}(X)\times Pic^{0}(Y)$.
Then $A=Pic^{0}(X\leftarrow Z\rightarrow Y)_{red}^{0}$ is a closed,
reduced, connected subgroup scheme of an abelian variety over $k$
and is hence an abelian variety by Theorem \ref{Theorem:closed_reduced_AV}.
\end{proof}
\begin{cor}
\label{Corollary:high_power_is_pluricanonical}Let $X\leftarrow Z\rightarrow Y$
be an étale correspondence of projective hyperbolic curves over $k$
without a core. Let $\mathscr{L}$ be an invariant line bundle of
positive degree. Then there exists $j,k\in\mathbb{N}$ with $\mathscr{L}^{j}\cong\Omega^{k}$.
\end{cor}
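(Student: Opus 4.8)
The plan is to play $\mathscr{L}$ off against the canonical invariant line bundle $\Omega$ inside the finite group scheme $Pic^0(X\leftarrow Z\rightarrow Y)$ furnished by Lemma \ref{Lemma:No_abelian_subvariety}. First I would record that $\Omega$ itself has strictly positive degree: since the correspondence is \'etale, $f^{*}\Omega^1_X\cong\Omega^1_Z$ canonically, so $\deg\Omega=\deg\Omega^1_Z=2g_Z-2$, which is $\geq 2$ because $Z$ is a projective hyperbolic curve. Write $a=\deg\mathscr{L}>0$ and $b=\deg\Omega>0$ (degrees always taken on $Z$).

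The key observation is then that $\mathscr{L}^{\otimes b}$ and $\Omega^{\otimes a}$ are invariant line bundles of the same degree $ab$ on $Z$, so that $\mathscr{M}:=\mathscr{L}^{\otimes b}\otimes\Omega^{\otimes(-a)}$ has degree $0$ on $Z$. Because $f$ and $g$ are finite \'etale of degrees $d$ and $e$, the degree on $Z$ of any invariant line bundle equals $d$ times the degree of its $X$-component and $e$ times the degree of its $Y$-component; hence $\mathscr{M}_X$ and $\mathscr{M}_Y$ also have degree $0$, and therefore $\mathscr{M}$ defines a class in $Pic^0(X\leftarrow Z\rightarrow Y)$.

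Next I would invoke Lemma \ref{Lemma:No_abelian_subvariety}, which says $Pic^0(X\leftarrow Z\rightarrow Y)$ is a finite group scheme over $k$. After enlarging $k$ by a finite extension $k'$ so that $Z$ acquires a rational point (hence so do $X$ and $Y$), isomorphism classes of invariant line bundles on the base-changed correspondence are exactly the $k'$-points of $Pic(X_{k'}\leftarrow Z_{k'}\rightarrow Y_{k'})$, and the class of $\mathscr{M}_{k'}$ lies in the finite abelian group $Pic^0(X_{k'}\leftarrow Z_{k'}\rightarrow Y_{k'})(k')$; let $m$ be its order. Then $\mathscr{M}_{k'}^{\otimes m}\cong\mathcal{O}$, i.e. $\mathscr{L}^{\otimes bm}_{k'}\cong\Omega^{\otimes am}_{k'}$ as invariant line bundles over $k'$. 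Finally I would descend this back to $k$: the invariant line bundle $\mathscr{N}=\mathscr{L}^{\otimes bm}\otimes\Omega^{\otimes(-am)}$ has degree $0$ on $X$, $Y$, and $Z$, and $H^0(\mathscr{N})$ is computed by an equalizer of maps of $k$-vector spaces, so by flat base change $h^0(\mathscr{N})=h^0(\mathscr{N}_{k'})=1$; a nonzero invariant section $(\xi_X,\xi_Y)$ of $\mathscr{N}$ has $\xi_X$ (resp. $\xi_Y$) nowhere vanishing, since a nonzero section of a degree-$0$ line bundle on a projective curve trivializes it, and then the compatibility $f^{*}\xi_X=\phi^{*}g^{*}\xi_Y$ exhibits $\mathscr{N}\cong\mathcal{O}$. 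Thus $\mathscr{L}^{\otimes bm}\cong\Omega^{\otimes am}$, which is the assertion with $j=bm$ and exponent $am$.

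I do not expect a serious obstacle: the argument is essentially degree bookkeeping together with the finiteness input of Lemma \ref{Lemma:No_abelian_subvariety}. The only place requiring genuine care is making sure $Pic^0(X\leftarrow Z\rightarrow Y)$ really controls the invariant line bundles at issue (which is why one passes to the finite extension where $Z$ has a rational point) and then descending the triviality of $\mathscr{N}$ back down to $k$, which is exactly what the $h^0$ argument in the last step accomplishes.
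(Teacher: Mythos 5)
Your proof is correct and follows essentially the same route as the paper: form the degree-zero invariant line bundle $\mathscr{L}^{\otimes b}\otimes\Omega^{\otimes(-a)}$ and use the finiteness of $Pic^{0}(X\leftarrow Z\rightarrow Y)$ from Lemma \ref{Lemma:No_abelian_subvariety} to conclude it is torsion. You simply make explicit two points the paper leaves implicit, namely that $\deg\Omega=2g_{Z}-2>0$ by hyperbolicity (needed to get \emph{positive} exponents $j,k$) and the descent of the triviality of $\mathscr{N}$ from $k'$ back to $k$ via the $h^{0}$ computation.
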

\begin{proof}
The set of degrees of invariant line bundles is a subgroup of $\mathbb{Z}$,
so $\Omega^{-n}\otimes\mathscr{L}^{m}$ has degree 0 for some $m,n\in\mathbb{N}$.
As our correspondence doesn't have a core, $\Omega^{-n}\otimes\mathscr{L}^{m}$
is torsion by Lemma \ref{Lemma:No_abelian_subvariety}. Therefore
there exists $j,k\in\mathbb{N}$ such that $\mathscr{L}^{j}\cong\Omega^{k}$. 
\end{proof}
Corollary \ref{Corollary:high_power_is_pluricanonical} shows that,
for étale correspondences of projective curves without a core, $\Omega$
plays a special role. We will now see several striking consequences
of Lemma \ref{Lemma:No_abelian_subvariety} and Corollary \ref{Corollary:high_power_is_pluricanonical}
in characteristic 0. 
\begin{cor}
\label{Corollary:no_intersection_hodge}Let $X\overset{f}{\leftarrow}Z\overset{g}{\rightarrow}Y$
be a correspondence of projective curves over $k$ without a core.
Suppose $\text{char}(k)=0$. Then $f^{*}H^{1}(X,\mathcal{O}_{X})\cap g^{*}H^{1}(Y,\mathcal{O}_{Y})=0$
inside of $H^{1}(Z,\mathcal{O}_{Z})$ and $f^{*}H^{0}(X,\Omega_{X}^{1})\cap g^{*}H^{1}(Y,\Omega_{Y}^{1})=0$
inside of $H^{0}(Z,\Omega_{Z}^{1})$.
\end{cor}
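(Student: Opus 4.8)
The plan is to derive both equalities from the finiteness of $Pic^{0}(X\leftarrow Z\rightarrow Y)$ established in Lemma~\ref{Lemma:No_abelian_subvariety}, together with the characteristic-$0$ theorem of Cartier that every group scheme of finite type over $k$ is smooth (so finite group schemes have zero Lie algebra, and a surjective homomorphism of abelian varieties induces a surjection on Lie algebras). Write
\[
u\colon Pic^{0}(X)\times Pic^{0}(Y)\longrightarrow Pic^{0}(Z),\qquad(\alpha,\beta)\longmapsto f^{*}\alpha-g^{*}\beta,
\]
so that $\text{ker}(u)=Pic^{0}(X\leftarrow Z\rightarrow Y)$ is a finite group scheme.

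For the statement about $H^{1}(\mathcal O)$: the Lie algebra of $Pic^{0}(C)$ is canonically $H^{1}(C,\mathcal O_{C})$, and under this identification $\text{Lie}(u)$ is the map $(\xi,\zeta)\mapsto f^{*}\xi-g^{*}\zeta$ on $H^{1}(X,\mathcal O_{X})\oplus H^{1}(Y,\mathcal O_{Y})$. Since $\text{Lie}$ is left exact, $\text{ker}(\text{Lie}(u))=\text{Lie}(\text{ker}(u))=0$. Next, $f^{*}$ and $g^{*}$ are injective on $H^{1}(\mathcal O)$: as $f$ is finite and generically separable, $\tfrac{1}{\deg f}$ times its trace splits the inclusion $\mathcal O_{X}\hookrightarrow f_{*}\mathcal O_{Z}$ (here $\text{char}(k)=0$ is used), so $H^{1}(X,\mathcal O_{X})$ is a direct summand of $H^{1}(Z,\mathcal O_{Z})$, and likewise for $g$. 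Finally I would invoke the elementary fact that for injective linear maps $\alpha,\beta$ with common target, $\text{ker}((x,y)\mapsto\alpha(x)-\beta(y))=0$ is equivalent to $\text{im}(\alpha)\cap\text{im}(\beta)=0$; applied to $\text{Lie}(u)$ this gives $f^{*}H^{1}(X,\mathcal O_{X})\cap g^{*}H^{1}(Y,\mathcal O_{Y})=0$ inside $H^{1}(Z,\mathcal O_{Z})$.

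For the statement about $H^{0}(\Omega^{1})$ I would pass to the dual isogeny. A homomorphism of abelian varieties has finite kernel if and only if its dual is surjective, so $u^{\vee}\colon JZ\to JX\times JY$ is surjective; in the notation of the proof of Lemma~\ref{Lemma:No_abelian_subvariety} (where $JC=Pic^{0}(C)^{\vee}$ carries the Albanese functoriality) one has $u^{\vee}=(f_{*},-g_{*})$. By Cartier, $\text{Lie}(u^{\vee})$ is surjective, hence its $k$-linear transpose $\text{cot}_{0}(u^{\vee})\colon\text{cot}_{0}(JX\times JY)\to\text{cot}_{0}(JZ)$ is injective. Using $\text{cot}_{0}(JC)=H^{0}(C,\Omega^{1}_{C})$ and that $\text{cot}_{0}(f_{*})$ is the pullback $f^{*}$ on global $1$-forms, $\text{cot}_{0}(u^{\vee})$ is the map $(\omega,\nu)\mapsto f^{*}\omega-g^{*}\nu$ on $H^{0}(X,\Omega^{1}_{X})\oplus H^{0}(Y,\Omega^{1}_{Y})$. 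Pullback of a nonzero $1$-form along a dominant generically separable morphism is nonzero, so $f^{*}$ and $g^{*}$ are injective here too, and the same elementary fact yields $f^{*}H^{0}(X,\Omega^{1}_{X})\cap g^{*}H^{0}(Y,\Omega^{1}_{Y})=0$ inside $H^{0}(Z,\Omega^{1}_{Z})$.

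I expect the main difficulty to be organizational rather than substantive: keeping track of which functoriality ($f^{*}$ on $Pic^{0}$, or $f_{*}$ on $J$) corresponds to which cohomology group under the identifications $\text{Lie}(Pic^{0}C)=H^{1}(C,\mathcal O_{C})$ and $\text{cot}_{0}(JC)=H^{0}(C,\Omega^{1}_{C})$, and isolating the single point where $\text{char}(k)=0$ enters, namely the passage from ``$\text{ker}(u)$ is finite'' to the vanishing of $\text{ker}(\text{Lie}(u))$ and the surjectivity of $\text{Lie}(u^{\vee})$. In characteristic $p$ both of these fail (finite group schemes may have nonzero Lie algebra, and surjections of group schemes need not be smooth), which matches the fact that in characteristic $p$ the Hasse invariant produces a nonzero invariant $1$-form (Example~\ref{Example:igusa_form}).
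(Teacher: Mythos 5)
Your proof is correct, and it is worth comparing with the one in the paper. For the first equality the two arguments are essentially the same: both come down to Lemma \ref{Lemma:No_abelian_subvariety} plus Cartier's theorem that group schemes in characteristic $0$ are smooth. The paper phrases this via the scheme-theoretic intersection $f^{*}Pic^{0}(X)\cap g^{*}Pic^{0}(Y)$ (reduced in characteristic $0$, hence an abelian variety, hence trivial), while you phrase it via $\mathrm{Lie}$ of the finite kernel of $(\alpha,\beta)\mapsto f^{*}\alpha-g^{*}\beta$; these are equivalent packagings, and note that for the implication you actually use ($\ker(\mathrm{Lie}(u))=0$ implies the intersection of images vanishes) the injectivity of $f^{*}$ and $g^{*}$ on $H^{1}(\mathcal{O})$ is not even needed, so the trace-splitting digression is harmless but superfluous. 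For the second equality your route is genuinely different: the paper invokes the Lefschetz principle to reduce to $\mathbb{C}$ and then uses Hodge symmetry, $\overline{H^{1}(C,\mathcal{O}_{C})}=H^{0}(C,\Omega_{C}^{1})$ inside $H^{1}_{sing}$, to transport the vanishing from the $(0,1)$-part to the $(1,0)$-part; you instead dualize the finite-kernel map to a surjection $JZ\to JX\times JY$, apply smoothness in characteristic $0$ to get surjectivity on Lie algebras, and transpose to cotangent spaces, identifying $\mathrm{cot}_{0}(JC)$ with $H^{0}(C,\Omega_{C}^{1})$ exactly as the paper does later in Corollary \ref{Corollary:map_of_curves_jacobians_separable}. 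Your version is purely algebraic, works uniformly over any characteristic-$0$ field without transcendental input, and makes transparent that the only place characteristic $0$ enters is Cartier's theorem (consistent with the Hasse-invariant counterexample in characteristic $p$); the paper's version is shorter if one is willing to take Hodge theory as given.
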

\begin{proof}
The vector space $H^{1}(X,\mathcal{O}_{X})$ is the tangent space
at the identity of $Pic^{0}(X)$. Moreover, the vector space $f^{*}H^{1}(X,\mathcal{O}_{X})\cap g^{*}H^{1}(Y,\mathcal{O}_{Y})$
is the tangent space at the identity of $f^{*}Pic^{0}(X)\cap g^{*}Pic^{0}(Y)$,
a closed subgroup of $Pic^{0}(Z)$. As the characteristic is 0, $f^{*}Pic^{0}(X)\cap g^{*}Pic^{0}(Y)$
is reduced and hence the connected component of the identity of $f^{*}Pic^{0}(X)\cap g^{*}Pic^{0}(Y)$
is an abelian variety. Lemma \ref{Lemma:No_abelian_subvariety} implies
that this abelian variety has dimension 0 and hence $f^{*}H^{1}(X,\mathcal{O}_{X})\cap g^{*}H^{1}(Y,\mathcal{O}_{Y})=0$
.

By the Lefschetz principle, we may suppose $k\cong\mathbb{C}$. If
$C$ is a smooth projective complex curve, $H_{sing}^{1}(C(\mathbb{C}),\mathbb{C})\cong H^{0}(C,\Omega_{C}^{1})\oplus H^{1}(C,\mathcal{O}_{C})$
and $\overline{H^{1}(C,\mathcal{O}_{C})}=H^{0}(C,\Omega_{C}^{1})$
by Hodge symmetry. Therefore 
\[
f^{*}H^{0}(X,\Omega_{X}^{1})\cap g^{*}H^{0}(Y,\Omega_{Y}^{1})=\overline{f^{*}H^{1}(X,\mathcal{O}_{X})\cap g^{*}H^{1}(Y,\mathcal{O}_{Y})}
\]
inside of $H_{sing}^{1}(Z(\mathbb{C}),\mathbb{C})$. The fact that
$\text{dim}f^{*}H^{1}(X,\mathcal{O}_{X})\cap g^{*}H^{1}(Y,\mathcal{O}_{Y})=0$
implies the result.
\end{proof}
\begin{cor}
\label{Corollary:no_intersection_sing}Let $X\overset{f}{\leftarrow}Z\overset{g}{\rightarrow}Y$
be a correspondence of projective curves over $\mathbb{C}$ without
a core. Then $f^{*}H_{sing}^{1}(X,\mathbb{Z})\cap g^{*}H_{sing}^{1}(Y,\mathbb{Z})=0$
inside of $H_{sing}^{1}(Z,\mathbb{Z})$.
\end{cor}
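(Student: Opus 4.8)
The plan is to tensor everything up to $\mathbb{C}$ and deduce the statement from Corollary~\ref{Corollary:no_intersection_hodge} via the Hodge decomposition. The key elementary observations are: for any space $H^1_{sing}(-,\mathbb{Z})$ is torsion-free (universal coefficients, $H_0$ being free), so $H^1_{sing}(Z,\mathbb{Z})\hookrightarrow H^1_{sing}(Z,\mathbb{C})$; and since $f,g$ are finite surjective morphisms of curves over $\mathbb{C}$, the pullbacks $f^{*},g^{*}$ on cohomology are injective (from $f_{*}f^{*}=(\deg f)\cdot\mathrm{id}$).

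First I would set $N:=f^{*}H^1_{sing}(X,\mathbb{Z})\cap g^{*}H^1_{sing}(Y,\mathbb{Z})\subseteq H^1_{sing}(Z,\mathbb{Z})$; it is torsion-free, so it is enough to show $N\otimes_{\mathbb{Z}}\mathbb{C}=0$ inside $H^1_{sing}(Z,\mathbb{C})$. Using $H^1(-,\mathbb{Z})\otimes\mathbb{C}=H^1(-,\mathbb{C})$ together with the injectivity of $f^{*}$ and $g^{*}$, the image of $N\otimes\mathbb{C}$ lands inside $V:=f^{*}H^1(X,\mathbb{C})\cap g^{*}H^1(Y,\mathbb{C})$. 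Now $f^{*}$ and $g^{*}$ are morphisms of pure Hodge structures of weight $1$ defined over $\mathbb{Z}$, so $f^{*}H^1(X,\mathbb{C})$ and $g^{*}H^1(Y,\mathbb{C})$ — and hence $V$ — are subspaces of $H^1(Z,\mathbb{C})$ that are stable under complex conjugation (coming from the real structure $H^1(Z,\mathbb{R})$) and compatible with the Hodge bigrading. Consequently $V=\bigl(V\cap H^0(Z,\Omega^1_Z)\bigr)\oplus\bigl(V\cap H^1(Z,\mathcal{O}_Z)\bigr)$. Because $f^{*}$ is a morphism of Hodge structures, $V\cap H^0(Z,\Omega^1_Z)\subseteq f^{*}H^0(X,\Omega^1_X)$, and likewise for $g$; thus $V\cap H^0(Z,\Omega^1_Z)\subseteq f^{*}H^0(X,\Omega^1_X)\cap g^{*}H^0(Y,\Omega^1_Y)=0$ by Corollary~\ref{Corollary:no_intersection_hodge}. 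Applying complex conjugation (or, equivalently, the statement for $H^1(-,\mathcal{O})$ in the same corollary) gives $V\cap H^1(Z,\mathcal{O}_Z)=0$ as well, so $V=0$. Hence $N\otimes\mathbb{C}=0$ and therefore $N=0$, which is the claim.

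This argument is essentially bookkeeping with weight-$1$ Hodge structures, and the only genuine input is Corollary~\ref{Corollary:no_intersection_hodge} (which ultimately rests on the finiteness of $Pic^0(X\leftarrow Z\rightarrow Y)$, Lemma~\ref{Lemma:No_abelian_subvariety}). The one place that deserves care — the step most likely to be the ``obstacle'' in a careful write-up — is verifying that an integral class lying in the intersection of the two integral lattices genuinely produces a complex class in the intersection $V$ of the complexified images; this uses precisely torsion-freeness of $H^1$, the injectivity of $f^{*},g^{*}$, and flatness of $\mathbb{C}$ over $\mathbb{Z}$ (so that intersection commutes with $\otimes\mathbb{C}$), but it presents no real difficulty.
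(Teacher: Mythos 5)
Your proof is correct and takes essentially the same route as the paper, whose one-line proof invokes exactly the two facts you unpack: Corollary \ref{Corollary:no_intersection_hodge} and the fact that $f^{*}$, $g^{*}$ are morphisms of integral Hodge structures. One cosmetic caution: the splitting $V=\bigl(V\cap H^{0}(Z,\Omega_{Z}^{1})\bigr)\oplus\bigl(V\cap H^{1}(Z,\mathcal{O}_{Z})\bigr)$ follows because $V$ is an intersection of \emph{bigraded} subspaces (images of morphisms of weight-$1$ Hodge structures), not from conjugation-stability alone, which by itself would not suffice.
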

\begin{proof}
This is immediate from Corollary \ref{Corollary:no_intersection_hodge}
and the fact that pulling back $H_{sing}^{1}$ under $f$ and $g$
induces a morphism of integral Hodge structures. 
\end{proof}
\begin{cor}
\label{Corollary:no_invariant_sections_char_0}Let $X\leftarrow Z\rightarrow Y$
be an étale correspondence of projective curves over $k$ without
a core. Suppose $\text{char}(k)=0$. Let $\mathscr{L}$ be a non-trivial
invariant line bundle. Then $h^{0}(\mathscr{L})=0$. 
\end{cor}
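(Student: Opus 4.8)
The plan is to assume for contradiction that $h^{0}(\mathscr{L})\geq 1$; then $h^{0}(\mathscr{L})=1$ by Proposition \ref{Proposition:at_most_one_section}, with a nonzero invariant section $s=(s_{X},s_{Y})$ (both components nonzero, since $f$ and $g$ are finite flat and $\phi$ is an isomorphism). Now split on $\deg\mathscr{L}$. If $\deg\mathscr{L}<0$ then $\mathscr{L}_{X}$ has negative degree but carries the nonzero section $s_{X}$, which is impossible. If $\deg\mathscr{L}=0$ then $\mathscr{L}_{X}$ and $\mathscr{L}_{Y}$ are degree-$0$ line bundles with nonzero sections, hence trivial, so $\mathscr{L}\cong(\mathcal{O}_{X},\mathcal{O}_{Y},\phi)$ with $\phi$ an automorphism of $\mathcal{O}_{Z}$, i.e.\ an element of $\Gamma(Z,\mathcal{O}_{Z})^{\times}=k^{\times}$; rescaling one trivialization shows $\mathscr{L}$ is isomorphic to the trivial invariant line bundle, contradicting non-triviality. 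So the whole content is the case $\deg\mathscr{L}>0$.

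For that case, first observe that $X$, $Y$, $Z$ are hyperbolic: a genus-$0$ étale correspondence has both maps isomorphisms, and a genus-$1$ étale correspondence has $k(X)\cap k(Y)$ equal to the fixed field of the finite subgroup of the elliptic curve $Z$ generated by the kernels of the two isogenies $f,g$ --- in either case there is a core. Hence Corollary \ref{Corollary:high_power_is_pluricanonical} applies and gives $\mathscr{L}^{j}\cong\Omega^{k}$ for some $j\geq 1$ and (since $\deg\mathscr{L}>0$) $k\geq 1$; then $s^{j}$ is a nonzero invariant section of $\Omega^{k}$. So it suffices to prove $H^{0}(\Omega^{m})=0$ for every $m\geq 1$. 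The case $m=1$ is immediate: an invariant $1$-form $(\omega_{X},\omega_{Y})$ satisfies $f^{*}\omega_{X}=g^{*}\omega_{Y}\in f^{*}H^{0}(X,\Omega^{1}_{X})\cap g^{*}H^{0}(Y,\Omega^{1}_{Y})$, which vanishes by Corollary \ref{Corollary:no_intersection_hodge}.

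For $m\geq 2$ I would reduce to $m=1$ using the cyclic cover trick developed just above. Let $m_{0}\geq 1$ be minimal with $H^{0}(\Omega^{m_{0}})\neq 0$; by the previous step $m_{0}\geq 2$, and a nonzero $\tau\in H^{0}(\Omega^{m_{0}})$ cannot be an invariant power (an invariant $e$-th root with $e>1$, $e\mid m_{0}$, would be a nonzero invariant section of $\Omega^{m_{0}/e}$). Apply the cyclic cover trick to $(X\leftarrow Z\rightarrow Y,\tau)$ with the invariant line bundle $\Omega$ and $d=m_{0}$: one obtains a correspondence $\mathcal{C}'=\bigl(X(\tau^{1/m_{0}})\leftarrow Z(\tau^{1/m_{0}})\rightarrow Y(\tau^{1/m_{0}})\bigr)$ of projective curves with an invariant section $\tau^{1/m_{0}}$ of the pullback $\pi^{*}\Omega$. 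This $\mathcal{C}'$ is again étale: the cyclic cover $Z(\tau^{1/m_{0}})\to Z$ is canonically $Z\times_{X}X(\tau^{1/m_{0}})$ (normalization commutes with the smooth base change $Z\to X$), so $Z(\tau^{1/m_{0}})\to X(\tau^{1/m_{0}})$ is the étale base change of $f$, and likewise for $g$. Each structure map $\pi_{\bullet}$ is finite and generically étale, so $\pi_{\bullet}^{*}\Omega^{1}_{\bullet}\hookrightarrow\Omega^{1}_{(\cdot)'}$ (the ramification divisor is effective), and these inclusions are compatible with the correspondence structure because the ramification divisors satisfy $R_{Z'/Z}=f'^{*}R_{X'/X}=g'^{*}R_{Y'/Y}$; therefore $\tau^{1/m_{0}}$, being a nonzero invariant section of $\pi^{*}\Omega$, defines (after the usual rescaling coming from the non-canonical choice of root) a nonzero invariant differential $1$-form on $\mathcal{C}'$. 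Provided $\mathcal{C}'$ has no core, this contradicts Corollary \ref{Corollary:no_intersection_hodge} applied to $\mathcal{C}'$, and we are done.

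The main obstacle is thus the remaining claim: the cyclic cover $\mathcal{C}'$ of a correspondence without a core again has no core. I would prove this by a direct Galois computation. After enlarging $k$ to contain $\mu_{m_{0}}$ (harmless by Proposition \ref{Proposition:having_a_core_invariant_under_extension}), the covers $X'\to X$, $Y'\to Y$, $Z'\to Z$ are $\mathbb{Z}/m_{0}$-Kummer, and since $\tau$ is an \emph{invariant} section the three deck generators $\tau_{X},\tau_{Y},\tau_{Z}$ are intertwined by $f'^{*}$ and $g'^{*}$. Writing $k(Z')=k(Z)(\alpha)$ with $\alpha^{m_{0}}\in k(Z)$, uniqueness of the $\alpha$-expansion over $k(Z)$ gives $f'^{*}k(X')\cap k(Z)=f^{*}k(X)$ and $g'^{*}k(Y')\cap k(Z)=g^{*}k(Y)$. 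If $\mathcal{C}'$ had a core, its function field $F:=f'^{*}k(X')\cap g'^{*}k(Y')\subseteq k(Z')$ would have transcendence degree $1$; but $F$ is stable under $\tau_{Z}$, so $F^{\langle\tau_{Z}\rangle}=F\cap k(Z)\subseteq f^{*}k(X)\cap g^{*}k(Y)=k$, while $[F:F^{\langle\tau_{Z}\rangle}]\mid m_{0}$, forcing $F$ finite over $k$ --- a contradiction. The leftover verifications (that the cyclic cover trick applied to an invariant non-power section yields an integral correspondence of curves, and that $\tau^{1/m_{0}}$ is invariant on the nose up to a scalar) are routine and I would not dwell on them.
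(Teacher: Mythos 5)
Your argument follows the paper's proof step for step: reduce to $\deg\mathscr{L}>0$, invoke Corollary \ref{Corollary:high_power_is_pluricanonical} to replace $\mathscr{L}$ by a power of $\Omega$, dispose of $H^{0}(\Omega)$ via Corollary \ref{Corollary:no_intersection_hodge}, and handle $H^{0}(\Omega^{m})$ for $m\geq2$ by the cyclic cover trick. The difference is that you are more careful at the one point where the paper's own proof is genuinely incomplete: the paper ends by asserting that the covered correspondence ``contradicts Corollary \ref{Corollary:no_intersection_hodge}'', but that corollary applies only to correspondences \emph{without a core}, and it is not formal that a cyclic cover of a coreless correspondence is coreless. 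Your Kummer-theoretic verification --- $F=k(X')\cap k(Y')$ is stable under the deck transformation $\tau_{Z}$, $F^{\langle\tau_{Z}\rangle}=F\cap k(Z)\subseteq f^{*}k(X)\cap g^{*}k(Y)=k$ by uniqueness of the $\alpha$-expansion, and $[F:F^{\langle\tau_{Z}\rangle}]$ divides $m_{0}$ --- is correct and supplies exactly the missing step. Your other added reductions (the degree $\leq 0$ cases, the hyperbolicity check needed to cite Corollary \ref{Corollary:high_power_is_pluricanonical}, \'etaleness of the cover correspondence by base change, and the compatible inclusions $\pi^{*}\Omega^{1}\hookrightarrow\Omega^{1}$ of the covers governed by matching ramification divisors) are all sound. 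The single item you defer --- that ``not a power of an \emph{invariant} section'' forces the three cyclic covers to be irreducible --- is also elided by the paper and is a touch subtler than routine: $\tau_{Z}$ could a priori be an $\ell$-th power of a section of $\Omega_{Z}^{m_{0}/\ell}$ whose divisor descends to $X$ and $Y$ only up to nontrivial torsion in $\ker f^{*}$ and $\ker g^{*}$, so that one obtains an invariant section of some $\mathscr{M}$ with $\mathscr{M}^{\ell}\cong\Omega^{m_{0}}$ rather than of $\Omega^{m_{0}/\ell}$ itself, and minimality of $m_{0}$ is not directly contradicted. This is harmless, however: one may simply replace the possibly reducible covers by compatible irreducible components, which still carry the tautological root as a section of $\pi^{*}\Omega$, are still \'etale over the original correspondence, and are still coreless by your same Galois argument.
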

\begin{proof}
We may suppose $\text{deg}\mathscr{L}>0$. Then there exists $j,k\in\mathbb{N}$
such that $\mathscr{L}^{j}\cong\Omega^{k}$ by Corollary \ref{Corollary:high_power_is_pluricanonical}.
It therefore suffices to prove that no positive power of $\Omega$
has a section.

Suppose $s\in H^{0}(\Omega^{k})$ that is not the power of any smaller-degree
invariant pluricanonical form on $X\leftarrow Z\rightarrow Y$. Then
we may apply the cyclic-cover trick to obtain an étale correspondence
\[
\xymatrix{ & Z(s^{\frac{1}{k}})\ar[dl]\ar[dr]\\
X(s^{\frac{1}{k}}) &  & Y(s^{\frac{1}{k}})
}
\]
with an invariant differential form. This contradicts Corollary \ref{Corollary:no_intersection_hodge}.
\end{proof}
We know, via the example of the Hasse invariant, that Corollary \ref{Corollary:no_invariant_sections_char_0}
is false in characteristic $p$. By examining the argument of Proposition
\ref{Corollary:no_intersection_hodge}, we see that the characteristic
0 hypothesis is used twice. First, we used that fact that all group
schemes are reduced to argue that $h^{1}(\mathcal{O})=0$. Second,
we used the Lefschetz principle and Hodge theory, namely $H^{0}(X,\Omega_{X}^{1})=\overline{H^{1}(X,\mathcal{O}_{X})}$,
to relate $h^{1}(\mathcal{O})$ to $h^{0}(\Omega)$.

We further investigate the failure of Proposition \ref{Corollary:no_invariant_sections_char_0}
in characteristic $p$. To do this, we briefly recall a few facts
about (commutative) finite group schemes. Let $k$ be a field of characteristic
$p$ and let $G$ be a finite group scheme over $k$. We denote by
$G^{0}$ the connected component of the identity. There is the connected-étale
sequence
\[
1\rightarrow G^{0}\rightarrow G\rightarrow G^{\acute{e}t}\rightarrow1
\]
which splits if $k$ is perfect. The space of invariant differentials
on $G$, a $k$-vector space denoted by $\omega_{G/k}$, may be identified
with the cotangent space at the origin of $G$ (see 3.14, 3.15 of
\cite{van2007abelian}). We denote by $G[p]$ the $p$-torsion of
$G$. Then the embedding $G[p]\hookrightarrow G$ induces an isomorphism
on the level of (co)tangent spaces at the identity (e.g. see the proof
of 4.47 of \emph{loc. cit.}) 
\begin{rem}
The nomenclature ``invariant differential'' is slightly overloaded;
we use this phrase to refer to a (left-) invariant differential form
on a group scheme. When we use \textquotedbl{}invariant differential
form\textquotedbl{}, we mean a section of $H^{0}(\Omega)$ on an étale
correspondence. We trust that this is not too confusing. 
\end{rem}
We record the following fact, surely well-known, for lack of a reference.
\begin{lem}
\label{Lemma:separable_diff_forms}Let $f:A\rightarrow B$ be a surjective
morphism of abelian varieties over a field $k$. Then $f$ is separable
if and only if the pullback map $f^{*}:H^{0}(B,\Omega_{B}^{1})\rightarrow H^{0}(A,\Omega_{A}^{1})$
is injective. 
\end{lem}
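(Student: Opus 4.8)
The plan is to reduce to the case where $f$ is a homomorphism, identify the space of global $1$-forms on an abelian variety with the cotangent space at the origin, and thereby translate the statement into the standard fact that a homomorphism of abelian varieties is separable if and only if it is smooth, equivalently if and only if it is surjective on Lie algebras.

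First I would observe that we may assume $f$ is a homomorphism. Writing $f = t_{f(0_A)}\circ h$ with $h\colon A\to B$ a homomorphism and $t_b$ translation by $b$, we get $f^{*} = h^{*}\circ t_{f(0_A)}^{*}$; since $t_{f(0_A)}$ is an automorphism of $B$, the map $t_{f(0_A)}^{*}$ is an isomorphism on $H^{0}(B,\Omega_{B}^{1})$, so $f^{*}$ is injective if and only if $h^{*}$ is, and likewise $f$ is separable if and only if $h$ is. Replace $f$ by $h$. Next, recall that on an abelian variety every global $1$-form is translation invariant: the morphism $a\mapsto t_{a}^{*}\omega-\omega$ from the proper connected variety $A$ to the affine space $H^{0}(A,\Omega_{A}^{1})$ is constant and vanishes at $0$. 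Hence $H^{0}(A,\Omega_{A}^{1})$ is canonically the cotangent space $\omega_{A/k}=(\mathrm{Lie}\,A)^{\vee}$ at the origin, and similarly for $B$ (cf.\ van der Geer--Moonen, 3.14--3.15). Under these identifications $f^{*}\colon H^{0}(B,\Omega_{B}^{1})\to H^{0}(A,\Omega_{A}^{1})$ is exactly the linear dual of $\mathrm{Lie}(f)=df_{0_A}\colon \mathrm{Lie}(A)\to\mathrm{Lie}(B)$, because $f^{*}$ respects the invariant structures and on the fibre at the origin is given by $(df_{0_A})^{\vee}$. Therefore $f^{*}$ is injective on global differentials precisely when $\mathrm{Lie}(f)$ is surjective.

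It remains to show that $f$ is separable if and only if $\mathrm{Lie}(f)$ is surjective. Since $f$ is a homomorphism, $f\circ t_{a}=t_{f(a)}\circ f$ shows the rank of $df_{a}$ is independent of $a$, so $f$ is smooth at every point if and only if $df_{0_A}$ is surjective, i.e.\ if and only if $\mathrm{Lie}(f)$ is surjective. On the other hand, $f$ is surjective with all fibres of dimension $\dim A-\dim B$ (they are torsors under $\ker f$), so by miracle flatness $f$ is flat; hence $f$ is smooth if and only if the generic fibre is geometrically reduced, which is exactly the condition that $k(A)/k(B)$ be separable, i.e.\ that $f$ be separable. Combining the two equivalences with the previous paragraph yields the lemma.

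The only steps requiring any care are the identification of $f^{*}$ with $(\mathrm{Lie}\,f)^{\vee}$ and the equivalence ``separable $\iff$ smooth'' for $f$; both are standard for homomorphisms of abelian varieties (and more generally for morphisms of group schemes over a field), and I would invoke the relevant statements from van der Geer--Moonen rather than reprove them. I anticipate no genuine obstacle; the one mild subtlety is that the translation argument reducing smoothness to the behaviour at the origin must be run for the homomorphism $f$, which is why the reduction to the homomorphism case is carried out at the very start.
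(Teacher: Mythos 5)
Your proof is correct, but it takes a genuinely different route from the paper's. The paper fixes the kernel $K=\ker(f)$ and factors $f$ through the quotient abelian varieties $A\rightarrow A/(K^{0})_{red}\rightarrow A/K^{0}\rightarrow B$, using Theorem \ref{Theorem:closed_reduced_AV} to know that $(K^{0})_{red}$ is an abelian subvariety; it then isolates the purely inseparable part of $f$ as the isogeny $A/(K^{0})_{red}\rightarrow A/K^{0}$ and checks injectivity of pullback on $1$-forms step by step, concluding that $f^{*}$ is injective iff $K^{0}$ is reduced. You instead avoid the quotient constructions entirely: after reducing to the case of a homomorphism (a step the paper leaves implicit when it writes $\ker(f)$), you identify $H^{0}(\Omega^{1})$ with the cotangent space at the origin via translation-invariance of global forms, so that $f^{*}=(\mathrm{Lie}\,f)^{\vee}$, and then prove ``separable $\iff$ $\mathrm{Lie}(f)$ surjective'' by homogeneity of $df$ plus miracle flatness. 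Your argument is more linear and does not need the existence of quotients by subgroup schemes or the reducedness theorem, though it leans on the generic-smoothness characterization of separability, which deserves the citation you promise; the paper's factorization has the advantage of exhibiting exactly where inseparability lives (in $K^{0}/(K^{0})_{red}$), which is the structural picture the surrounding section exploits when relating non-reduced group schemes to invariant differential forms. Both arguments ultimately rest on the same standard inputs from van der Geer--Moonen.
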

\begin{proof}
Let $K=\text{ker}(f)$ be the kernel of $f$. Then we have the following
inclusion of group schemes
\[
(K^{0})_{red}\subset K^{0}\subset K
\]
Now, $(K^{0})_{red}$ is a closed, reduced, connected subgroup scheme
of an abelian variety over $k$; hence it is an abelian variety by
Theorem \ref{Theorem:closed_reduced_AV}. Therefore $A/(K^{0})_{red}$
exists as an abelian variety (section 9.5 of Polishchuk \cite{polishchuk2003abelian}
or Example 4.40 in \cite{van2007abelian}.) Similarly, $A/K^{0}$,
a quotient of $A/(K^{0})_{red}$ by the finite group scheme $K^{0}/(K^{0})_{red}$
exists as an abelian variety. We have the following commutative diagram
\[
\xymatrix{K\ar[r] & A\ar[r]\ar[ddr] & B\\
 &  & A/K^{0}\ar[u]\\
 &  & A/(K^{0})_{red}\ar[u]
}
\]
where the right vertical arrows are isogenies. In particular $A/K^{0}\rightarrow B$
is a separable isogeny and $A/(K^{0})_{red}\rightarrow A/K^{0}$ is
a purely inseparable isogeny. By looking at tangent spaces, we see
$K^{0}$ is non-reduced if and only if the pullback map $H^{0}(B,\Omega_{B}^{1})\rightarrow H^{0}(A/(K^{0})_{red},\Omega_{A/(K^{0})_{red}}^{1})$
is not injective. On the other hand the short exact sequence of abelian
varieties over $k$
\[
0\rightarrow(K^{0})_{red}\rightarrow A\rightarrow A/(K^{0})_{red}\rightarrow0
\]
shows that the pullback map $H^{0}(A/(K^{0})_{red},\Omega_{A/(K^{0})_{red}}^{1})\rightarrow H^{0}(A,\Omega_{A}^{1})$
is injective. Therefore $f^{*}:H^{0}(B,\Omega_{B}^{1})\rightarrow H^{0}(A,\Omega_{A}^{1})$
is injective if and only if $K^{0}$ is reduced, i.e. if and only
if $f$ is a separable morphism.
\end{proof}
\begin{cor}
\label{Corollary:map_of_curves_jacobians_separable}Let $f:C\rightarrow D$
be a generically separable, finite morphism of projective curves over
$k$. Then $f_{*}:JC\rightarrow JD$ is separable.
\end{cor}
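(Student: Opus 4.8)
The plan is to apply Lemma \ref{Lemma:separable_diff_forms} to the morphism of abelian varieties $f_*\colon JC\to JD$. First I would record that $f_*$ is surjective: writing $n=\deg f$, the pullback map $f^*\colon JD\to JC$ satisfies $f_*\circ f^*=[n]_{JD}$ (the preimage of a divisor on $D$ has total degree $n$ over it), and $[n]_{JD}$ is surjective since $n\neq 0$; hence so is $f_*$. By Lemma \ref{Lemma:separable_diff_forms}, it then suffices to show that the pullback
\[
(f_*)^*\colon H^0(JD,\Omega^1_{JD})\longrightarrow H^0(JC,\Omega^1_{JC})
\]
is injective.

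The main step is to reidentify $(f_*)^*$ with the pullback of $1$-forms along $f$ itself. Since separability of a morphism is insensitive to base field extension, I may assume $C$ has a $k$-rational point $c_0$; set $d_0=f(c_0)$. These base points give Albanese maps $\mathrm{alb}_C\colon C\to JC$ and $\mathrm{alb}_D\colon D\to JD$, and by the covariant (Albanese) functoriality that realizes $f_*$ the square $f_*\circ\mathrm{alb}_C=\mathrm{alb}_D\circ f$ commutes. Pulling back $1$-forms and using the classical isomorphisms $\mathrm{alb}_C^*\colon H^0(JC,\Omega^1_{JC})\xrightarrow{\ \sim\ }H^0(C,\Omega^1_C)$ and likewise for $D$, one obtains $(f_*)^*=(\mathrm{alb}_C^*)^{-1}\circ f^*\circ\mathrm{alb}_D^*$, so $(f_*)^*$ is injective if and only if $f^*\colon H^0(D,\Omega^1_D)\to H^0(C,\Omega^1_C)$ is injective.

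Finally I would check this last injectivity, which is exactly where generic separability of $f$ enters. The hypothesis means $k(C)/k(D)$ is a separable field extension, so the canonical map $f^*\Omega^1_D\to\Omega^1_C$ is an isomorphism at the generic point of $C$; being a nonzero map of invertible sheaves on the integral curve $C$, it is therefore injective, and composing with the injection $\Omega^1_D\hookrightarrow f_*f^*\Omega^1_D$ (from $\mathcal O_D\hookrightarrow f_*\mathcal O_C$ tensored with the line bundle $\Omega^1_D$) shows $f^*$ is injective on global $1$-forms. Hence $(f_*)^*$ is injective and Lemma \ref{Lemma:separable_diff_forms} gives that $f_*$ is separable. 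The only delicate point is the middle paragraph: one must make sure the Jacobian functoriality in play is the covariant one given by $f_*$, so that dualizing produces $f^*$ on curve differentials rather than $f_*$ on homology; this is classical, but it is the step worth spelling out carefully.
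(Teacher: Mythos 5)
Your proposal is correct and follows essentially the same route as the paper: reduce via the Abel--Jacobi/Albanese compatibility square to the injectivity of $f^{*}$ on global $1$-forms, which follows from generic separability, and then invoke Lemma \ref{Lemma:separable_diff_forms}. You merely make explicit two points the paper leaves implicit, namely the surjectivity of $f_{*}$ (needed to apply the lemma) and the sheaf-theoretic verification that $f^{*}$ is injective on $H^{0}(D,\Omega^{1}_{D})$.
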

\begin{proof}
Choose an element $c$ of $C(k)$ (after possibly extending $k$)
and let $d=f(c)$. Then we have the following commutative diagram
\[
\xymatrix{C\ar[r]\ar[d] & JC\ar[d]\\
D\ar[r] & JD
}
\]
where the horizontal arrows are the Abel-Jacobi maps associated to
$c$ and $d$ respectively. Pulling back along these Abel-Jacobi maps
yields isomorphisms $H^{0}(JC,\Omega_{JC}^{1})\rightarrow H^{0}(C,\Omega_{C}^{1})$
and $H^{0}(JD,\Omega_{JD}^{1})\rightarrow H^{0}(D,\Omega_{D}^{1})$,
\emph{compatible with pulling back along $f$ and $f_{*}$}. As $f$
was assumed to be generically separable, we obtain that
\[
(f_{*})^{*}:H^{0}(JD,\Omega_{JD}^{1})\rightarrow H^{0}(JC,\Omega_{JC}^{1})
\]
is injective. Now apply Lemma \ref{Lemma:separable_diff_forms}.
\end{proof}
Let $k$ be a field of characteristic $p$ and let $X\leftarrow Z\rightarrow Y$
be a correspondence of projective curves over $k$ without a core.
Suppose $Pic^{0}(X\leftarrow Z\rightarrow Y)$ is non-trivial. We
have the following diagram
\[
\xymatrix{ & Pic^{0}(Z)\\
Pic^{0}(X)\ar[ur] &  & Pic^{0}(Y)\ar[ul]\\
 & Pic^{0}(X\leftarrow Z\rightarrow Y)\ar[ul]\ar[ur]
}
\]
Let $G=Pic^{0}(X\leftarrow Z\rightarrow Y)[p]$. Take $p$-torsion
and apply Cartier duality to obtain the diagram:
\begin{equation}
\xymatrix{ & JZ[p]\ar[dl]^{f_{*}}\ar[dr]_{g_{*}}\\
JX[p]\ar[dr] &  & JY[p]\ar[dl]\\
 & \check{G}
}
\label{Diagram:commutative_finite_gp_schemes}
\end{equation}

Now, if $A$ is any abelian variety over $k$, the natural inclusion
$A[p]\hookrightarrow A$ induces an isomorphism on the level of invariant
differentials: $H^{0}(A,\Omega_{A}^{1})\cong\omega_{A[p]/k}$. On
the other hand, pulling back differential 1-forms under $f_{*}:JZ\rightarrow JX$
and $g_{*}:JZ\rightarrow JY$ is an injective operation by Corollary
\ref{Corollary:map_of_curves_jacobians_separable}. Therefore pulling
back invariant differentials is injective:
\[
\omega_{JX[p]/k}\hookrightarrow\omega_{JZ[p]/k}
\]
\[
\omega_{JY[p]/k}\hookrightarrow\omega_{JZ[p]/k}
\]
Pick $z\in Z(k)$ and set $x=f(z)$, $y=g(z)$. Then using the compatible
Abel-Jacobi maps, we obtain that the following two vector spaces are
isomorphic.
\[
\{(\eta_{X},\eta_{Y})\in H^{0}(X,\Omega_{X}^{1})\oplus H^{0}(Y,\Omega_{Y}^{1})|f^{*}\eta_{X}=g^{*}\eta_{Y}\}\cong
\]
\[
\{(s,t)\in\omega_{JX[p]/k}\oplus\omega_{JY[p]/k}|f^{*}s=g^{*}t\}
\]

\begin{cor}
Let $X\leftarrow Z\rightarrow Y$ be an étale correspondence of projective
curves over $k$ without a core. Then
\begin{itemize}
\item $h^{0}(\Omega)=\dim_{k}\{(s,t)\in\omega_{JX[p]/k}\oplus\omega_{JY[p]/k}|f^{*}s=g^{*}t\}$
\item If the map $T_{e}JX[p]\rightarrow T_{e}\check{G}$ is non-zero, then
$h^{0}(\Omega)=1$.
\item The dimension of the image of $T_{e}JX[p]\rightarrow T_{e}\check{G}$
is no greater than 1.
\end{itemize}
\end{cor}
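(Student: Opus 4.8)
The plan is to assemble the three assertions from ingredients already in place: the isomorphism of $k$-vector spaces displayed in the paragraph immediately preceding the corollary, Proposition~\ref{Proposition:at_most_one_section} applied to the invariant line bundle $\Omega$, and the contravariant functoriality of invariant differentials applied to the commutative Diagram~\ref{Diagram:commutative_finite_gp_schemes}.

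For the first bullet point I would unwind the definition of $H^{0}(\Omega)$ for $\Omega=(\Omega^{1}_{X},\Omega^{1}_{Y},\phi)$: because the correspondence is \'etale, $\phi$ is the isomorphism $f^{*}\Omega^{1}_{X}\cong\Omega^{1}_{Z}\cong g^{*}\Omega^{1}_{Y}$ built from the two canonical isomorphisms, so $H^{0}(\Omega)=\{(\eta_{X},\eta_{Y})\in H^{0}(X,\Omega^{1}_{X})\oplus H^{0}(Y,\Omega^{1}_{Y})\mid f^{*}\eta_{X}=g^{*}\eta_{Y}\}$. The paragraph before the corollary identifies this space, via the compatible Abel--Jacobi maps and the identification $\omega_{A[p]/k}\cong H^{0}(A,\Omega^{1}_{A})$ for an abelian variety $A$, with $V:=\{(s,t)\in\omega_{JX[p]/k}\oplus\omega_{JY[p]/k}\mid f^{*}s=g^{*}t\}$; this is exactly the first bullet point. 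Proposition~\ref{Proposition:at_most_one_section} then gives $\dim_{k}V=h^{0}(\Omega)\le 1$, a bound I will use twice below.

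For the remaining points, write $\iota_{X}\colon JX[p]\to\check G$ and $\iota_{Y}\colon JY[p]\to\check G$ for the maps of Diagram~\ref{Diagram:commutative_finite_gp_schemes}, and let $\alpha_{X}\colon\omega_{\check G/k}\to\omega_{JX[p]/k}$ and $\alpha_{Y}\colon\omega_{\check G/k}\to\omega_{JY[p]/k}$ be the induced pullbacks of invariant differentials. Since $\omega_{H/k}$ is the cotangent space at the origin of a finite group scheme $H$, the map $\alpha_{X}$ is the $k$-linear dual of $T_{e}JX[p]\to T_{e}\check G$; in particular these two maps have the same rank. Diagram~\ref{Diagram:commutative_finite_gp_schemes} commutes (it is the Cartier dual of the commutative Picard diagram, and Cartier duality is an exact contravariant functor), so functoriality of invariant differentials yields $f^{*}\circ\alpha_{X}=g^{*}\circ\alpha_{Y}$ as maps $\omega_{\check G/k}\to\omega_{JZ[p]/k}$; hence $\omega\mapsto(\alpha_{X}(\omega),\alpha_{Y}(\omega))$ defines a $k$-linear map $\omega_{\check G/k}\to V$. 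Now if $T_{e}JX[p]\to T_{e}\check G$ is nonzero then $\alpha_{X}\neq 0$, so some $\omega\in\omega_{\check G/k}$ has $\alpha_{X}(\omega)\neq 0$, and $(\alpha_{X}(\omega),\alpha_{Y}(\omega))$ is then a nonzero vector of $V$; together with $\dim_{k}V\le 1$ this forces $h^{0}(\Omega)=\dim_{k}V=1$, which is the second bullet point. For the third, the image of $\alpha_{X}$ is the image of the composite of $\omega_{\check G/k}\to V$ with the projection $V\to\omega_{JX[p]/k}$, hence is contained in the image of that projection, which has dimension at most $\dim_{k}V\le 1$; since the dimension of the image of $\alpha_{X}$ equals the rank of $T_{e}JX[p]\to T_{e}\check G$, the latter map has image of dimension at most $1$.

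I do not anticipate a genuine obstacle: the content is bookkeeping on top of the linear-algebra identifications already carried out before the corollary. The only points deserving care are the verification that $H^{0}(\Omega)$ is precisely the first of the two vector spaces appearing there---where the \'etale hypothesis enters through the canonical identification $f^{*}\Omega^{1}_{X}\cong\Omega^{1}_{Z}\cong g^{*}\Omega^{1}_{Y}$---and the commutativity of Diagram~\ref{Diagram:commutative_finite_gp_schemes} together with the compatibility of pullback of invariant differentials on $JX[p]$ with pullback of $1$-forms on $JX$, both of which are recorded in the excerpt.
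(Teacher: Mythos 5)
Your proposal is correct and follows essentially the same route as the paper: the first bullet is the identification recorded in the paragraph preceding the corollary, the second uses the commutativity of Diagram \ref{Diagram:commutative_finite_gp_schemes} to produce a nonzero invariant differential from a nonzero $\omega_{\check G/k}\to\omega_{JX[p]/k}$, and the third uses $h^{0}(\Omega)\le 1$ (Proposition \ref{Proposition:at_most_one_section}) to bound the rank of that map and hence of its dual. Your write-up is in fact slightly more careful than the paper's, in making explicit that the pair $(\alpha_X(\omega),\alpha_Y(\omega))$ is nonzero and that rank is preserved under dualization.
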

\begin{proof}
The first part follows from the above discussion. If the map $T_{e}JX[p]\rightarrow T_{e}\check{G}$
is non-zero, then the pullback map $\omega_{\check{G}/k}\rightarrow\omega_{JX[p]/k}\hookrightarrow\omega_{JZ[p]/k}$
has non-zero image. By the commutativity of Diagram \ref{Diagram:commutative_finite_gp_schemes}
there exists a pair $(s,t)\in\omega_{JX[p]/k}\oplus\omega_{JY[p]/k}$
such that the pullbacks to $\omega_{JZ[p]/k}$ agree . Hence there
exists an invariant differential form on $X\leftarrow Z\rightarrow Y$.
The dimension of the image of the map $\omega_{\check{G}/k}\rightarrow\omega_{JX[p]/k}$
is at most 1 because $h^{0}(\Omega)\leq1$. In particular, the dimension
of the image of $T_{e}JX[p]\rightarrow T_{e}\check{G}$ is at most
1.
\end{proof}
\begin{question}
\label{Question:invariant_forms_nonreduced_pic}Let $X\leftarrow Z\rightarrow Y$
be an étale correspondence of projective curves over $k$ without
a core. Suppose $\text{char}(k)=p$. If $h^{0}(\Omega)=1$, is the
Cartier dual of $Pic^{0}(X\leftarrow Z\rightarrow Y)$ non-reduced?
\end{question}

\section{\label{Section:Clumps_and_bundles}Clumps}
\begin{defn}
Let $X\overset{f}{\leftarrow}Z\overset{g}{\rightarrow}Y$ be a correspondence
of curves over a field $k$. A \emph{clump} $S$ is a finite set of
$\overline{k}$ points $S\subset Z(\overline{k})$ such that $f^{-1}(f(S))=g^{-1}(g(S))=S$.
An \emph{étale} \emph{clump }is a clump $S$ such that $f$ and $g$
are étale at all points of $S$.
\end{defn}
If $X\overset{f}{\leftarrow}Z\overset{g}{\rightarrow}Y$ has a core,
then as in Remark \ref{Remark:Bounded_orbit} every $z\in Z(\overline{k})$
is contained in a clump. In the language of Remark \ref{Remark:Bounded_orbit},
a clump is a finite union of \emph{bounded orbits of geometric points}.

Let $X\overset{f}{\leftarrow}Z\overset{g}{\rightarrow}Y$ be a correspondence
of curves over $k$ of type $(d,e)$. Given an étale clump $S$, we
now construct a natural invariant line bundle $\mathscr{L}(S)$ together
with a one-dimensional subspace $V_{S}\subset H^{0}(\mathscr{L}(S))$
of invariant sections. (This line bundle may only be defined after
a finite extension of $k$.) Think of $S$ as an effective divisor
on $Z$ where all of the coefficients of the points are 1. Then $f_{*}S$
is an effective divisor on $X$, all of whose coefficients are exactly
$d$ because $f$ is étale at all points of $S$ and has degree $d$.
Therefore $\frac{1}{d}f_{*}S$ makes sense as an effective divisor
on $X$; it is the divisor associated to the finite set $f(S)\subset X$.
The associated line bundle $\mathscr{L}_{X}(S)$ on $X$ comes equipped
with a natural one-dimensional space of sections $W_{X}\subset H^{0}(X,\mathscr{L}_{X}(S))$
with the following defining property: for any $w\in W_{X}$, $\text{div}(w)=\frac{1}{d}f_{*}S$.
Moreover, $f^{*}\mathscr{L}_{X}(S)$ is isomorphic to the line bundle
associated with the divisor $S$. Similarly we obtain a line bundle
$\mathscr{L}_{Y}(S)$ on $Y$ with a natural one-dimensional space
of sections $W_{Y}$. We set 
\[
\mathscr{L}(S):=(\mathscr{L}_{X}(S),\mathscr{L}_{Y}(S),\phi)
\]
 for any choice of isomorphism $\phi$ between the pullbacks. The
vector space $H^{0}(\mathscr{L}(S))$ has a natural line $V_{S}$
of invariant sections, given by $f^{*}W_{X}$ and $g^{*}W_{Y}$; in
particular $h^{0}(\mathscr{L}(S))\geq1$.
\begin{cor}
\label{Corollary:char_0_no_clumps}Let $X\leftarrow Z\rightarrow Y$
be a étale correspondence of projective curves over $k$ without a
core. Suppose $\text{char}k=0$. Then there are no clumps.
\end{cor}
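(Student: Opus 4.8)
The plan is to read off the corollary from the construction of $\mathscr{L}(S)$ just given together with Corollary \ref{Corollary:no_invariant_sections_char_0}. Suppose for contradiction that there is a clump $S\subset Z(\overline{k})$; we may assume $S\neq\emptyset$, since the empty set is a clump only in a vacuous sense. Because the correspondence is étale, $f$ and $g$ are étale at every point of $S$, so $S$ is automatically an étale clump and the construction preceding this corollary applies to it.

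First I would dispose of the bookkeeping about the base field. The line bundle $\mathscr{L}(S)$ may only be defined over some finite extension $k'/k$; replace $k$ by $k'$. This changes nothing essential: the correspondence $X_{k'}\leftarrow Z_{k'}\rightarrow Y_{k'}$ is still étale, still has characteristic $0$, and still has no core by Proposition \ref{Proposition:having_a_core_invariant_under_extension}; and $S$, being a finite set of $\overline{k}=\overline{k'}$-points, is unchanged and is still a clump.

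Now recall that $\mathscr{L}(S)=(\mathscr{L}_X(S),\mathscr{L}_Y(S),\phi)$ comes equipped with the line $V_S\subset H^{0}(\mathscr{L}(S))$ of invariant sections, so $h^{0}(\mathscr{L}(S))\geq 1$. On the other hand, $f^{*}\mathscr{L}_X(S)$ is isomorphic to $\mathcal{O}_Z(S)$, which has degree $|S|>0$; hence the invariant line bundle $\mathscr{L}(S)$ has positive degree, and in particular it is non-trivial. But Corollary \ref{Corollary:no_invariant_sections_char_0} says that a non-trivial invariant line bundle on an étale correspondence of projective curves without a core over a field of characteristic $0$ satisfies $h^{0}=0$. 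This contradicts $h^{0}(\mathscr{L}(S))\geq 1$, so no non-empty clump can exist.

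There is essentially no obstacle in the argument itself; all the substance lives upstream, in Corollary \ref{Corollary:no_invariant_sections_char_0}, hence in Lemma \ref{Lemma:No_abelian_subvariety} (finiteness of the Picard scheme of the correspondence) and the cyclic-cover reduction to $\mathbb{C}$ followed by Hodge symmetry. The only point requiring a moment's care is that $\mathscr{L}(S)$ genuinely has positive degree --- equivalently, that in an étale correspondence a non-empty clump, viewed as a reduced divisor on $Z$, has pushforward to $X$ with every multiplicity equal to $\deg f$ --- and this is precisely what the hypothesis ``étale clump'' provides and what the construction above already records.
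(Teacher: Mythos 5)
Your argument is correct and is essentially identical to the paper's proof: a clump on an étale correspondence is automatically an étale clump, the construction of $\mathscr{L}(S)$ gives a non-trivial (positive-degree) invariant line bundle with $h^{0}(\mathscr{L}(S))\geq1$, and this contradicts Corollary \ref{Corollary:no_invariant_sections_char_0}. The extra bookkeeping you supply (finite extension of $k$, positivity of the degree) is consistent with what the paper leaves implicit.
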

\begin{proof}
A clump $S$ is automatically étale and hence yield an nontrivial
invariant line bundle $\mathscr{L}(S)$ such that $h^{0}(\mathscr{L}(S))\geq1$.
This contradicts Corollary \ref{Corollary:no_invariant_sections_char_0}.
\end{proof}
\begin{rem}
Corollary \ref{Corollary:char_0_no_clumps} shows that there is no
direct analog of the supersingular locus in characteristic 0 for the
following reason: Hecke orbits are big. This provides another conceptual
reason why there is no canonical lift for supersingular elliptic curves.
\end{rem}
\begin{cor}
\label{Corollary:cusps_are_ramified}A Hecke correspondence of compactified
modular curves over $\mathbb{C}$ is ramified at at least one of the
cusps.
\end{cor}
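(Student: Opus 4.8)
The plan is to recognize the set of cusps as an \emph{\'etale clump} and invoke Corollary \ref{Corollary:char_0_no_clumps}. Fix a Hecke correspondence $\overline{X}\overset{\bar f}{\leftarrow}\overline{Z}\overset{\bar g}{\rightarrow}\overline{Y}$ of compactified modular curves over $\mathbb{C}$, and let $X\overset{f}{\leftarrow}Z\overset{g}{\rightarrow}Y$ be the restriction to the open modular curves (the complements of the cuspidal loci); after raising the level so that we are dealing with honest modular curves (no elliptic points), $X\leftarrow Z\rightarrow Y$ is literally an \'etale correspondence, say the $l$-isogeny correspondence for a prime $l$ prime to the level. First I would record that this open correspondence has no core: the orbit of a point under the associated many-valued function is its entire $l$-power isogeny class with compatible level structure, hence infinite (cf. Exercise \ref{Exercise:2_isogeny_hecke_elliptic}, Remark \ref{Remark:Bounded_orbit}, and the $PSL_2(\mathbb{Q}_l)$-building picture from the introduction). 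Since having a core depends only on the function fields, which are unchanged by compactification, $\overline{X}\leftarrow\overline{Z}\rightarrow\overline{Y}$ is a correspondence of projective curves without a core.

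Next I would check that the finite set $S:=\overline{Z}\smallsetminus Z\subset\overline{Z}(\mathbb{C})$ of cusps satisfies $\bar f^{-1}(\bar f(S))=\bar g^{-1}(\bar g(S))=S$, i.e. that $S$ is a clump. This is the classical fact that a Hecke correspondence carries cusps to cusps and no non-cuspidal point to a cusp: the moduli interpretation extends over the cusps via generalized elliptic curves, and an isogeny of generalized elliptic curves has degenerate (N\'eron polygon) source if and only if it has degenerate target.

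Now I would argue by contradiction. Suppose $\overline{X}\leftarrow\overline{Z}\rightarrow\overline{Y}$ is unramified at every cusp, i.e. $\bar f$ and $\bar g$ are \'etale at all points of $S$. Combined with \'etaleness away from $S$, this makes $\overline{X}\leftarrow\overline{Z}\rightarrow\overline{Y}$ an \'etale correspondence of projective curves without a core in characteristic $0$, and $S$ is then an \'etale clump --- directly contradicting Corollary \ref{Corollary:char_0_no_clumps}. (If one worries that Corollary \ref{Corollary:char_0_no_clumps} tacitly wants hyperbolic curves, note that an \'etale map from a projective curve onto $\mathbb{P}^1$ is forced to be an isomorphism by Riemann--Hurwitz, so if any of $\overline{X},\overline{Y},\overline{Z}$ had genus $0$ the correspondence would be trivial, which a genuine Hecke correspondence is not; thus we may assume all three curves are hyperbolic.) Hence $\bar f$ or $\bar g$ is ramified at some point of $S$, which is the assertion.

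The genuine content beyond Corollary \ref{Corollary:char_0_no_clumps} lies entirely in the two ``modular'' inputs --- \'etaleness-without-a-core of the open Hecke correspondence, and the strong stability of the cuspidal locus --- and both are standard; so I do not expect a real obstacle, only the bookkeeping of pinning down conventions (which modular curves and which Hecke operators) precisely enough that the open correspondence is representable and \'etale on the nose.
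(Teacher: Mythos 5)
Your proposal is correct and is essentially the paper's own argument: the cusps form a clump, the open Hecke correspondence is étale without a core, and if the compactification were unramified at every cusp one would get an étale clump on an étale correspondence of projective curves without a core in characteristic $0$, contradicting Corollary \ref{Corollary:char_0_no_clumps}. The extra care you take (stability of the cuspidal locus via generalized elliptic curves, ruling out genus $0$) is just a fuller write-up of the same proof.
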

\begin{proof}
The cusps are a clump. Hecke correspondences are unramified on open
modular curves; if the compactified correspondence were unramified
at all of the cusps, then the cusps wold form a clump on an étale
correspondence of projective curves without a core, contradicting
Corollary \ref{Corollary:char_0_no_clumps}.
\end{proof}
\begin{rem}
The hypothesis of Corollary \ref{Corollary:char_0_no_clumps} implies
that $X$, $Y$, and $Z$ are Shimura curves by Theorem \ref{Theorem:Mochizuki}.
This corollary was probably known, but we could not find a reference.
Similarly, Corollary \ref{Corollary:cusps_are_ramified} admits a
direct approach, but we find our method conceptually appealing.
\end{rem}
\begin{thm}
\label{Theorem:one_clump}Let $X\overset{f}{\leftarrow}Z\overset{g}{\rightarrow}Y$
be a correspondence of curves over a field $k$ without a core. There
is at most one étale clump.
\end{thm}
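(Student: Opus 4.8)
The plan is to derive a contradiction from the existence of two distinct \'etale clumps, using the invariant line bundle $\mathscr{L}(S)$ attached to an \'etale clump $S$ constructed above, together with Proposition~\ref{Proposition:at_most_one_section} and the finiteness of $Pic^0(X\leftarrow Z\rightarrow Y)$ supplied by Lemma~\ref{Lemma:No_abelian_subvariety}. (Throughout, ``clump'' is read as ``nonempty clump'', since the empty set is vacuously an \'etale clump.) First I would replace the correspondence by its canonical compactification $\overline{X}\leftarrow\overline{Z}\rightarrow\overline{Y}$: since $f$ and $g$ are finite, $\overline{f}$ and $\overline{g}$ carry the boundary of $\overline{Z}$ into the boundaries of $\overline{X}$ and $\overline{Y}$, so an \'etale clump of the original correspondence remains an \'etale clump of the compactified one, and ``without a core'' is preserved. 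After a further finite extension of $k$ (harmless by Proposition~\ref{Proposition:having_a_core_invariant_under_extension}) I may assume that every clump in sight and every line bundle $\mathscr{L}(S)$ is defined over $k$.

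The combinatorial step is to reduce to a strict containment. If $S_1\neq S_2$ are two \'etale clumps, then $S_1\cup S_2$ is again an \'etale clump, because $f^{-1}(f(S_1\cup S_2))=f^{-1}(f(S_1))\cup f^{-1}(f(S_2))=S_1\cup S_2$ and likewise for $g$, while $f,g$ are \'etale at every point of the union. Since $S_1\neq S_2$, at least one of the clumps fails to be contained in the other, so after relabeling we may assume $S_2\not\subseteq S_1$; then $T:=S_1\subsetneq S_1\cup S_2=:S$ is a strict containment of nonempty \'etale clumps. It suffices to show such a containment is impossible.

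For the main step I would play $\mathscr{L}(T)$ off against $\mathscr{L}(S)$. Each carries a nonzero invariant section, $s_T\in V_T$ and $s_S\in V_S$, whose pullback to $Z$ has divisor exactly $T$ and $S$ respectively (this is the defining property of $\mathscr{L}_X(\cdot),\mathscr{L}_Y(\cdot)$ together with \'etaleness), and $\deg\mathscr{L}(T)=|T|$, $\deg\mathscr{L}(S)=|S|$. Hence $\mathscr{M}:=\mathscr{L}(T)^{\otimes|S|}\otimes\mathscr{L}(S)^{\otimes(-|T|)}$ is an invariant line bundle of degree $0$, i.e.\ lies in $Pic^0(X\leftarrow Z\rightarrow Y)(k)$; by Lemma~\ref{Lemma:No_abelian_subvariety} this group scheme is finite, so $\mathscr{M}$ is torsion and $\mathscr{L}(T)^{\otimes N|S|}\cong\mathscr{L}(S)^{\otimes N|T|}$ for some integer $N\geq1$. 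Call this common invariant line bundle $\mathscr{N}$. Then $s_T^{\otimes N|S|}$ and $s_S^{\otimes N|T|}$ are two nonzero invariant sections of $\mathscr{N}$; but $h^0(\mathscr{N})\leq1$ by Proposition~\ref{Proposition:at_most_one_section}, so they are proportional and therefore define the same divisor on $Z$, namely $N|S|\cdot T=N|T|\cdot S$. Choosing $z\in S\setminus T$, the coefficient of $z$ on the left is $0$ while on the right it equals $N|T|\geq1$, a contradiction. Hence there is at most one \'etale clump.

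The argument has no single deep step; the points most likely to need care are purely bookkeeping: arranging that the strict containment $T\subsetneq S$ has $T$ nonempty (so that $N|T|\geq1$ in the torsion step), checking that passing to the compactification introduces no spurious clump points, and verifying that $s_T^{\otimes N|S|}$ and $s_S^{\otimes N|T|}$ are honest global invariant sections of one and the same line bundle $\mathscr{N}$, so that Proposition~\ref{Proposition:at_most_one_section} genuinely applies.
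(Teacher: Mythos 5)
Your proof is correct and follows essentially the same route as the paper's: attach the invariant line bundle $\mathscr{L}(S)$ to each \'etale clump, use Lemma \ref{Lemma:No_abelian_subvariety} to make a degree-$0$ combination torsion, and then invoke Proposition \ref{Proposition:at_most_one_section} to force the two invariant sections to be proportional, which is impossible since their divisors have different supports. Your preliminary reduction to a strict containment $T\subsetneq S$ via unions of clumps is harmless but unnecessary: the paper concludes directly from $S\neq T$ that the lines $V_S^{\otimes j}$ and $V_T^{\otimes k}$ are distinct inside $H^0(\mathscr{L}(S)^j)\cong H^0(\mathscr{L}(T)^k)$.
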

\begin{proof}
It is harmless to compactify the correspondence, so we assume $X$,
$Y$, and $Z$ are all projective. Suppose there were two étale clumps,
$S$ and $T$. As in the discussion above, they give rise to positive
invariant line bundles $\mathscr{L}(S)$ and $\mathscr{L}(T)$ together
with lines $V_{S}\subset H^{0}(\mathscr{L}(S))$ and $V_{T}\subset H^{0}(\mathscr{L}(T))$.
There exists $m,n\in\mathbb{N}$ such that $\mathscr{L}(S)^{m}\otimes\mathscr{L}(T)^{-n}$
has degree 0. Lemma \ref{Lemma:No_abelian_subvariety} implies that
$Pic^{0}(X\leftarrow Z\rightarrow Y)$ is a finite group scheme over
$k$; in particular, $\mathscr{L}(S)^{m}\otimes\mathscr{L}(T)^{-n}$
is a torsion line bundle. Therefore there exists $j,k\in\mathbb{N}$
such that $\mathscr{L}(S)^{j}\cong\mathscr{L}(T)^{k}$.

The divisor of any element of $V_{S}^{\otimes j}$ is a positive multiple
of $S$, and similarly the divisor of any element of $V_{T}^{\otimes k}$
is a positive multiple of $T$. In particular, if $S\neq T$, then
the spaces $V_{S}^{\otimes j}$ and $V_{T}^{\otimes k}$ would be
different lines inside of $H^{0}(\mathscr{L}(S)^{j})\cong H^{0}(\mathscr{L}(T)^{k})$.
This would imply that $h^{0}(\mathscr{L}(S)^{j})\geq2$, contradicting
Proposition \ref{Proposition:at_most_one_section}.
\end{proof}
\begin{question}
\label{Question:clump_exist?}Let $k$ be a field of characteristic
$p$. Let $X\overset{f}{\leftarrow}Z\overset{g}{\rightarrow}Y$ be
an étale correspondence of projective curves over $k$ without a core.
Is there always a clump? Equivalently, is there always an invariant
pluricanonical differential form?
\end{question}
\begin{rem}
\label{Remark:generalize_Perret_Hallouin}Theorem \ref{Theorem:one_clump}
generalizes the main theorem of Hallouin and Perret \cite{hallouin2014recursive}
(see the Introduction and Theorem 19 of \emph{loc. cit.}), and the
proof technique is completely different. In particular, they use the
Perron-Frobenius theorem from spectral graph theory. We provide a
detailed description of how to derive their result from ours.

Let $k\cong\mathbb{F}_{q}$ and let $X$ be a smooth projective (geometrically
irreducible) curve over $k$. Hallouin and Perret consider correspondences
$\Gamma\subset X\times X$, with the assumption that $\Gamma$ is
absolutely irreducible and of type $(d,d)$. Let $\mathcal{T}(X,\Gamma)$
be the sequence of curves $(C_{n})_{n\geq1}$ defined as follows:
\[
C_{n}=\{(P_{1},P_{2,}\dots P_{n})\in X^{n}|(P_{i},P_{i+1})\in\Gamma\text{ for each }i=1\dots n-1\}
\]
Let $\mathcal{G}_{\infty}(X,\Gamma)$, the \emph{Geometric Graph},
be the graph whose vertices are the geometric points $X(\mathbb{F})$
and for which there is an oriented edge from $P\in X(\mathbb{F})$
to $Q\in X(\mathbb{F})$ if $(P,Q)\in\Gamma$. Theorem 19 of \emph{loc.
cit.} states that if the $C_{n}$ are irreducible for all $n\geq1$,
then $\mathcal{G}_{\infty}(X,\Gamma)$ has \emph{at most one} finite
$d$-regular subgraph. As the correspondence is of type $(d,d)$,
every finite $d$-regular subgraph of $\mathcal{G}_{\infty}(X,\Gamma)$
induces an étale clump $S_{\Gamma}\subset\Gamma(\mathbb{F})$ with
the following \textquotedbl{}symmetry\textquotedbl{} property: $\pi_{1}(S_{\Gamma})=\pi_{2}(S_{\Gamma})$.
We call $S_{\Gamma}$ a \emph{symmetric étale clump} and set $S_{X}=\pi_{1}(S_{\Gamma})=\pi_{2}(S_{\Gamma})$.

To understand their hypotheses, we first make the following definition.
Let $\Omega$ be an algebraically closed field of transcendence degree
1 over $k$. Let $\mathcal{H}_{gen}^{full}$ be the following \emph{directed
graph}: the vertices are elements of $X(\Omega)$ and the edges are
$\Gamma(\Omega)$. The source of an edge $e$ is $\pi_{1}(e)\in X(\Omega)$
and the target of $e$ is $\pi_{2}(e)\in X(\Omega)$. As usual, this
graph is generally not connected and all connected components are
isomorphic: we let $\mathcal{H}_{gen}$ be any connected component.
Every vertex of the graph $\mathcal{H}_{gen}$ has in-degree and out-degree
$d$. The hypothesis that $C_{n}$ is irreducible for all $n$ is
equivalent to $\mathcal{H}_{gen}$ having \emph{no directed cycles}.
Note that this implies, but is not equivalent to, $\mathcal{H}_{gen}$
being infinite. 

There is of course a surjective ``collapsing'' map $\mathcal{G}_{gen}^{full}\rightarrow\mathcal{H}_{gen}^{full}$
for a self correspondence $X\leftarrow\Gamma\rightarrow X$. One may
make this a map of directed graphs by giving the following orientation
to edges in the 2-colored graph $\mathcal{G}_{gen}^{full}$: an edge
$e$ between a blue vertex $v$ and a red vertex $w$ has the orientation
$v\rightarrow w$. This map\emph{ }does \emph{not necessarily }yield
a surjective map $\mathcal{G}_{gen}\rightarrow\mathcal{H}_{gen}$;
in particular, $\mathcal{G}_{gen}$ can be finite with $\mathcal{H}_{gen}$
infinite\emph{ }(e.g. see Elkies' Example \ref{Example:elkies}.)

We now derive their result from ours. Let us assume, as they implicitly
do, that $\mathcal{H}_{gen}$ has no directed cycles. There are two
options:
\begin{itemize}
\item $X\leftarrow\Gamma\rightarrow X$ has no core (i.e. $\mathcal{G}_{gen}$
is infinite by Proposition \ref{Proposition:no_core_infinite_graph})
\item $X\leftarrow\Gamma\rightarrow X$ has no core (i.e. $\mathcal{G}_{gen}$
is finite by Proposition \ref{Proposition:no_core_infinite_graph})
\end{itemize}
In the first case, Theorem \ref{Theorem:one_clump} directly applies.
In the second case, we will derive their theorem from ours. We first
note that it is sufficient to prove the theorem after replacing $\Gamma$
by its normalization, i.e. we may assume $\Gamma$ is smooth. Call
the coarse core $D$. We have the following diagram.
\[
\xymatrix{ & \Gamma\ar[dl]_{\pi_{1}}^ {}\ar[dr]^{\pi_{2}}\\
X\ar[dr]_{p} &  & X\ar[dl]^{q}\\
 & D
}
\]
As $D$ is the coarse core, $\Gamma$ is the normalization of a component
of $X\times_{p,D,q}X$. A symmetric étale clump $S_{\Gamma}$ of $X\leftarrow\Gamma\rightarrow X$
yields unique étale clump $S_{X}$ for the correspondence $D\leftarrow X\rightarrow D$.
In particular, if we show that $D\leftarrow X\rightarrow D$ has at
most one étale clump, we will have proven $X\leftarrow\Gamma\rightarrow X$
has at most one symmetric étale clump and we will have succeeded in
deriving their theorem from ours.

We need only prove that $D\leftarrow X\rightarrow D$ has no core.
This is where we use the irreducibility of all of the $C_{n}$. Note
that $C_{n}$ is birational to $\Gamma\times_{\pi_{2},X,\pi_{1}}\Gamma\dots\times_{\pi_{2},X,\pi_{1}}\Gamma$
and $\lim_{n\rightarrow\infty}\deg(C_{n}\rightarrow D)=\infty$. On
the other hand, $\Gamma$ is birational to a component of $X\times_{p,D,q}X$.
Therefore $C_{n}$ is birational to an irreducible component
\[
X\times_{p,D,q}X\times\dots\times_{p,D,q}X
\]
with increasing degree over $D$ as $n\rightarrow\infty$. We now
argue this cannot happen if $D\leftarrow X\rightarrow D$ had a core.

If $D\leftarrow X\rightarrow D$ has a core, we can find a curve $W\rightarrow X$
that is finite Galois over both compositions to $D$ by Corollary
\ref{Corollary:Finite_No_Galois}. If $E$ is any irreducible component
of $X\times_{p,D,q}X\times\dots\times_{p,D,q}X$, then 
\[
\deg(E\rightarrow D)\leq\deg(W\rightarrow D)
\]
As the $C_{n}$ are birational to irreducible of components of $X\times_{p,D,q}X\times\dots\times_{p,D,q}X$
and $\deg(C_{n}\rightarrow D)$ goes to $\infty$ as $n\rightarrow\infty$,
we see that $D\leftarrow X\rightarrow D$ has no core. Therefore Theorem
\ref{Theorem:one_clump} applies.

We remark that this argument only requires that there are components
of $C_{n}$ whose degree over $D$ goes to $\infty$ as $n\rightarrow\infty$.
In particular, we only need that $\mathcal{H}_{gen}$ is an infinite
graph. 
\end{rem}
\begin{example}
\label{Example:elkies}Consider the symmetric modular correspondence
$Y(1)\leftarrow Y_{0}(2)\rightarrow Y(1)$ over $\mathbb{F}$. Then
points of the form $\{(P_{1},P_{2},P_{1})|(P_{1},P_{2})\in Y_{0}(2)\}$
are an irreducible component of $C_{3}$. Therefore $C_{3}$ is not
irreducible and their theorem does not directly apply. Note that $\mathcal{G}_{gen}$
is a tree, by direct computation or Lemma \ref{Lemma:free_33}. However,
one can massage the correspondence, à la Elkies \cite{elkies2001explicit},
to obtain the one-clump theorem for this correspondence using their
method: it is equivalent to prove that there is only one clump for
the correspondence $Y_{0}(2)\leftarrow Y_{0}(4)\rightarrow Y_{0}(2)$.
Here $Y_{0}(4)$ parametrizes pairs of elliptic curves equipped with
a cyclic degree 4 isogeny between them $[E_{1}\rightarrow E_{2}]$.
This cyclic isogeny is uniquely the composition $E_{1}\rightarrow E'\rightarrow E_{2}$,
and the two maps to $Y_{0}(2)$ send this isogeny to $[E_{1}\rightarrow E']$
and $[E'\rightarrow E_{2}]$ respectively. Note that this correspondence
has a core: $Y(1)$, where $[E_{1}\rightarrow E']$ and $[E'\rightarrow E_{2}]$
are both sent to $[E']$. Hallouin and Perret's theorem applies to
this correspondence. This correspondence has the property that $\mathcal{G}_{gen}$
is finite (because there is a core) but $\mathcal{H}_{gen}$ is infinite.
For more details, see Hallouin and Perret \cite{hallouin2014recursive}
or Section 2.5 of \cite{krishnamoorthy2016dynamics}.
\end{example}
We describe a simple consequence of having a clump.
\begin{prop}
\label{Proposition:if_clump_then_bounded_field_of_constants}Let $X\leftarrow Z\rightarrow Y$
be a correspondence of curves without a core with $Z$ hyperbolic.
If an étale clump exists, then the degree of the maximal ``field
of constants'' of $E_{\infty}$ is finite over $k$. In other words,
Question \ref{Question:Field_of_Constants} has an affirmative answer. 
\end{prop}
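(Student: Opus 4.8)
The plan is to exploit the geometric description of $\mathcal{G}_{phys,z}$ coming from Remark \ref{Remark:W_infty_and_G_phys} together with surjectivity of the specialization map from Lemma \ref{Lemma:etale_specialization}. Let $S\subset Z(\overline{k})$ be the \'etale clump, and let $z\in S$. First I would observe that since $S$ is a clump, the physical graph component $\mathcal{G}_{phys,z}$ is a \emph{finite} connected subgraph of $\mathcal{G}_{phys}^{full}$: indeed $f^{-1}(f(S))=g^{-1}(g(S))=S$ forces every edge adjacent to a vertex of $\mathcal{G}_{phys,z}$ to lie in $S$, so the whole connected component containing $z$ is supported on the finite set $S$ and its finitely many images under $f$ and $g$. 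Call this finite graph $G_0 := \mathcal{G}_{phys,z}$.

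Next I would pin down the field of constants. Choose $\tilde z\in W_\infty(\overline{k})$ lying over $z$, giving the valuation ring $\mathcal{O}_{W_\infty,\tilde z}$ with fraction field $E_\infty$ and residue map $\pi:\mathcal{O}_{W_\infty,\tilde z}\to\overline k$. The specialization map $s_{\tilde z}:\mathcal{G}_{gen}\to G_0$ is surjective by Lemma \ref{Lemma:etale_specialization}. The key point is that this surjectivity "bounds" the tower: for each $W_{YX\cdots Y}$ in the tower defining $W_\infty$, a point $\xi$ of it over $z$ records (by Remark \ref{Remark:W_infty_and_G_phys}) a ball of some radius $n$ in $\mathcal{G}_{phys,z}=G_0$ centered at the appropriate vertex; but $G_0$ is finite, so once $n$ exceeds the diameter of $G_0$ this ball is all of $G_0$ and stabilizes. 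Concretely, I would argue that the residue field of $\mathcal{O}_{W_\infty,\tilde z}$ — equivalently the field of constants one obtains by specializing the tower at $z$ — is generated over the residue field of $\mathcal{O}_{Z,z}$ by the coordinates of the finitely many points of $S$ (and of $f(S)$, $g(S)$) together with the finitely many Galois-automorphism data needed to see all edges of $G_0$. Since $G_0$ is finite and everything lives over a finite extension of $k$, this residue field is a finite extension of $k$.

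Finally I would translate this into the statement about $E_\infty\otimes_k\overline k$. The maximal field of constants of $E_\infty$ is the algebraic closure of $k$ inside $E_\infty$; equivalently $E_\infty\otimes_k\overline k$ is a product of finitely many fields iff that algebraic closure is finite over $k$. Any constant $c\in E_\infty$ algebraic over $k$ is a unit in the valuation ring $\mathcal{O}_{W_\infty,\tilde z}$, and its image $\pi(c)\in\overline k$ determines $c$ (two distinct constants cannot be congruent modulo the maximal ideal, as their difference would be a constant unit in the maximal ideal). Hence the field of constants of $E_\infty$ injects into the residue field of $\mathcal{O}_{W_\infty,\tilde z}$, which we have just shown is finite over $k$. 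Therefore the field of constants is finite over $k$, which is exactly the affirmative answer to Question \ref{Question:Field_of_Constants}.

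The main obstacle I expect is the second step: making precise the claim that the residue field of $\mathcal{O}_{W_\infty,\tilde z}$ is finite over $k$, i.e. that the tower $W_\infty$, although it is an infinite tower, acquires no new constants beyond a finite stage when specialized at a clump point. This requires carefully using surjectivity of $s_{\tilde z}$ onto the \emph{finite} graph $G_0$ to see that the curves $W_{YX\cdots Y}$ eventually become \'etale covers of $Z$ that are "already seen" inside the finite combinatorial data of $G_0$, so that their residue fields at $\tilde z$ stabilize. One clean way to package this: the stabilizer in $A$ (or in $\mathrm{Gal}(E_\infty/k(Z))$) of the vertex $s_{\tilde z}(PQ)=z$ surjects onto $\mathrm{Aut}(G_0)$-type data, and since $G_0$ is finite the relevant residual Galois action factors through a finite quotient, bounding the constant field. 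I would spell this out using Remark \ref{Remark:W_infty_and_G_phys} as the bridge between the tower and the finite graph $G_0$.
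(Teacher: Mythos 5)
Your proposal is correct and follows essentially the same route as the paper: the clump (and hence the finite graph $\mathcal{G}_{phys,z}$ it supports) is defined over a finite extension $k'/k$, Remark \ref{Remark:W_infty_and_G_phys} then produces $k'$-valued points on every curve $W_{YX\dots Y}$ of the tower, and the field of constants of $E_{\infty}$ embeds into the residue field at such a point, hence into $k'$. The only blemish is the appeal to Lemma \ref{Lemma:etale_specialization}, whose hypotheses (the whole correspondence \'etale, curves projective hyperbolic) are not assumed here and whose surjectivity statement is not actually needed --- the finiteness of $\mathcal{G}_{phys,z}$ and the fact that fibers of the tower over a clump point are supported on the clump are what do the work.
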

\begin{proof}
If a étale clump exists, then all of the points of the clump are defined
over a finite extension of fields $k'/k$. There are therefore $k'$-valued
points of all of the curves $W_{YX\dots Y}$, as in Remark \ref{Remark:W_infty_and_G_phys}.
This implies that all of the $W_{YX\dots Y}$ and hence $W_{\infty}$
and $E_{\infty}$ have field of constants contained in $k'$. The
field of constants of $E_{\infty}$ is then finite over $k$ as desired.
\end{proof}
\bibliographystyle{plain}
\bibliography{dynamics}

\begin{thebibliography}{10}

\bibitem{abe2013langlands}
Tomoyuki Abe.
\newblock Langlands correspondence for isocrystals and existence of crystalline
  companion for curves.
\newblock {\em arXiv preprint arXiv:1310.0528}, 2013.

\bibitem{borovoi1982langlands}
Mikhail~V Borovoi.
\newblock Langlands conjecture on conjugation of the shimura varieties.
\newblock {\em Functional Analysis and Its Applications}, 16(4):292--294, 1982.

\bibitem{boutot1991uniformisation}
J-F Boutot and Henri Carayol.
\newblock Uniformisation p-adique des courbes de shimura: les th{\'e}or{\`e}mes
  de {\v{c}}erednik et de drinfeld.
\newblock {\em Ast{\'e}risque}, (196-97):45--158, 1991.

\bibitem{buzzard1997integral}
Kevin Buzzard et~al.
\newblock Integral models of certain shimura curves.
\newblock {\em Duke Mathematical Journal}, 87(3):591, 1997.

\bibitem{chai2005monodromy}
Ching-Li Chai.
\newblock Monodromy of hecke-invariant subvarieties.
\newblock {\em Pure and Applied Mathematics Quarterly}, 1(2), 2005.

\bibitem{deligne1979shimura}
Pierre Deligne.
\newblock Vari{\'e}t{\'e}s de shimura: interpr{\'e}tation modulaire, et
  techniques de construction de modeles canoniques.
\newblock In {\em Proc. Symp. Pure Math}, volume~33, pages 247--290, 1979.

\bibitem{djokovic1980regular}
Dragomir~{\v{Z}} Djokovi{\'c} and Gary~L Miller.
\newblock Regular groups of automorphisms of cubic graphs.
\newblock {\em Journal of Combinatorial Theory, Series B}, 29(2):195--230,
  1980.

\bibitem{elkies2001explicit}
Noam~D Elkies.
\newblock Explicit modular towers.
\newblock {\em arXiv preprint math/0103107}, 2001.

\bibitem{hallouin2014recursive}
Emmanuel Hallouin and Marc Perret.
\newblock Recursive towers of curves over finite fields using graph theory.
\newblock {\em Moscow Maths Journal}, pages 32--p, 2014.

\bibitem{katz1973p}
Nicholas~M Katz.
\newblock P-adic properties of modular schemes and modular forms.
\newblock In {\em Modular functions of one variable III}, pages 69--190.
  Springer, 1973.

\bibitem{kisin2010integral}
Mark Kisin.
\newblock Integral models for shimura varieties of abelian type.
\newblock {\em J. Amer. Math. Soc}, 23(4):967--1012, 2010.

\bibitem{kohel1996endomorphism}
David Kohel.
\newblock {\em Endomorphism rings of elliptic curves over finite fields}.
\newblock PhD thesis, University of California at Berkeley, 1996.

\bibitem{krishnamoorthyrank2companion}
Raju Krishnamoorthy.
\newblock Local systems and barsotti-tate groups.
\newblock {\em forthcoming}.

\bibitem{krishnamoorthy2016dynamics}
Subrahmanya Krishnamoorthy.
\newblock {\em Dynamics, Graph Theory, and Barsotti-Tate Groups: Variations on
  a theme of Mochizuki}.
\newblock Columbia University, 2016.

\bibitem{margulis1991discrete}
Gregori~A Margulis.
\newblock {\em Discrete subgroups of semisimple Lie groups}, volume~17.
\newblock Springer Science \& Business Media, 1991.

\bibitem{milne1983action}
James~S Milne.
\newblock The action of an automorphism of c on a shimura variety and its
  special points.
\newblock In {\em Arithmetic and geometry}, pages 239--265. Springer, 1983.

\bibitem{milne1982conjugates}
James~S Milne and K-y Shih.
\newblock Conjugates of shimura varieties.
\newblock In {\em Hodge cycles, motives, and Shimura varieties}, pages
  280--356. Springer, 1982.

\bibitem{mochizuki1998correspondences}
Shinichi Mochizuki.
\newblock Correspondences on hyperbolic curves.
\newblock {\em Journal of Pure and Applied Algebra}, 131(3):227--244, 1998.

\bibitem{mumford1969note}
David Mumford.
\newblock A note of shimura's paper 'discontinuous groups and abelian
  varieties'.
\newblock {\em Mathematische Annalen}, 181(4):345--351, 1969.

\bibitem{oort2004foliations}
Frans Oort.
\newblock Foliations in moduli spaces of abelian varieties.
\newblock {\em Journal of the American Mathematical Society}, 17(2):267--296,
  2004.

\bibitem{polishchuk2003abelian}
Alexander Polishchuk.
\newblock {\em Abelian varieties, theta functions and the Fourier transform},
  volume 153.
\newblock Cambridge University Press, 2003.

\bibitem{serre1977arbres}
Jean~Pierre Serre and Hyman Bass.
\newblock {\em Arbres, amalgames, SL2: cours au Coll{\`e}ge de France}.
\newblock Soci{\'e}t{\'e} math{\'e}matique de France, 1977.

\bibitem{sutherland2013isogeny}
Andrew Sutherland.
\newblock Isogeny volcanoes.
\newblock {\em The Open Book Series}, 1(1):507--530, 2013.

\bibitem{tutte1966connectivity}
William~Thomas Tutte.
\newblock {\em Connectivity in graphs}, volume~15.
\newblock University of Toronto Press, 1966.

\bibitem{ulmer1990universal}
Douglas~L Ulmer.
\newblock On universal elliptic curves over igusa curves.
\newblock {\em Inventiones mathematicae}, 99(1):377--391, 1990.

\bibitem{van2007abelian}
Gerard van~der Geer and Ben Moonen.
\newblock Abelian varieties.
\newblock {\em Book in preparation}, 2008.
\newblock [Online; accessed 15-March-2017].

\bibitem{xia2013crystalline}
Jie Xia.
\newblock Crystalline hodge cycles and shimura curves in positive
  characteristics.
\newblock {\em arXiv preprint arXiv:1311.0940}, 2013.

\bibitem{xia2013deformation}
Jie Xia.
\newblock On the deformation of a barsotti-tate group over a curve.
\newblock {\em arXiv preprint arXiv:1303.2954}, 2013.

\bibitem{xia2013tensor}
Jie Xia.
\newblock Tensor decomposition of isocrystals characterizes mumford curves.
\newblock {\em arXiv preprint arXiv:1310.2682}, 2013.

\bibitem{xia2014adic}
Jie Xia.
\newblock l-adic monodromy and shimura curves in positive characteristics.
\newblock {\em arXiv preprint arXiv:1403.0125}, 2014.

\end{thebibliography}

\end{document}